\theoremstyle{plain}
\newtheorem*{acknowledgement*}{Acknowledgement}
\newtheorem{corollary}{Corollary}
\newtheorem{definition}{Definition}
\newtheorem{example}{Example}
\newtheorem{lemma}{Lemma}
\newtheorem{proposition}{Proposition}
\newtheorem{theorem}{Theorem}
\numberwithin{equation}{section}
\begin{document}
\title{\textbf{Clairaut conformal submersions from
Ricci solitons}}
\author{Murat Polat}
\date{}
\maketitle

\begin{abstract}
In the present article, we characterize Clairaut conformal submersions whose
total manifolds admit a Ricci soliton and provide a non-trivial example of
such Clairaut conformal submersions. We firstly calculate scalar curvature
and Ricci tensors of total manifolds of Clairaut conformal submersions and
provide necessary conditions for the fibres of such Clairaut conformal
submersions to be almost Ricci solitons and Einstein. Further, we provide
necessary conditions for the base manifold to be Ricci soliton and Einstein.
Then, we find a necessary condition for vector field $\dot{\zeta}$ to be
conformal vector field and killing vector field. Besides, we indicate that
if total manifolds of Clairaut conformal submersions admit a Ricci soliton
with the potential mean curvature vector field $H$ of $Ker\vartheta _{\ast
}, $ then the total manifolds of Clairaut conformal submersions admit a
gradient Ricci soliton. Finally, by solving Poisson equation, we acquire a
necessary and sufficient condition for Clairaut conformal submersions to be
harmonic.
\end{abstract}

\textbf{Keywords:} Ricci soliton, Clairaut conformal submersion, harmonic
map, Riemannian manifold, Riemannian submersion.

\textbf{AMS. 2020:} 53C12; 53B20; 53C43; 53C25.

\section{Introduction}

In \cite{Neill_1966}, O'Neill defined that conformal submersions are a
generalization of Riemannian submersions. Ishihara \cite{Ishihara_1979} and
Fugledge \cite{Fugledge_1978} defined horizontally conformal maps which have
applications in computer graphics and medical imaging (brain imaging).
Gundmundsson found the fundamental equations for the conformal submersions
in \cite{Gundmundsson_1992}. In \cite{Falcitelli_2008}, Falcitelli et al.
showed that under what conditions conformal submersions would become
Riemannian submersions. In \cite{Baird_2003}, Baird et al. indicated that a
smooth map $\vartheta :(M_{1}^{d_{1}},g_{M_{1}})\rightarrow
(M_{2}^{d_{2}},g_{M_{2}})$ is called weakly conformal at $p_{1}\in M_{1}$ if
there exists a number $\sigma ^{2}(p_{1})$ such that
\begin{equation}
g_{M_{2}}(\vartheta _{\ast }X_{1},\vartheta _{\ast }X_{2})=\sigma
^{2}(p_{1})g_{M_{1}}(X_{1},X_{2}),  \label{eqn1.1}
\end{equation}%
for$~X_{1},X_{2}\in \Gamma (T_{p_{1}}M_{1}).$

Let $(M_{1}^{d_{1}},g_{M_{1}})$ and $(M_{2}^{d_{2}},g_{M_{2}})$ be
Riemannian manifolds, then $\vartheta :(M_{1}^{d_{1}},g_{M_{1}})\rightarrow
(M_{2}^{d_{2}},g_{M_{2}})$ is called horizontally weakly conformal map at $%
p_{1}\in M_{1}$ if either $i.$ $\vartheta _{\ast p_{1}}=0$, or $ii.$ $%
\vartheta _{\ast p_{1}}$ maps the horizontal space $\mathcal{H}%
_{p_{1}}=(Ker\vartheta _{\ast p_{1}})^{\bot }$ conformally onto $%
T_{\vartheta (p_{1})}M_{2}$, i.e. $\vartheta _{\ast p_{1}}$ is satisfies (%
\ref{eqn1.1}) for $X_{1},X_{2}\in \mathcal{H}_{p_{1}}$ and is surjective. If
a node $p_{1}$ is of type $i.$ then $p_{1}\in M_{1}$ is said to be critical
node of $\vartheta $ and if node $p_{1}$ is of type $ii.$ then $p_{1}\in
M_{1}$ is said to be regular node. The number $\sigma ^{2}(p_{1})$
(non-negative) is called the square dilation (square root $\sigma (p_{1})$
is called the dilation). If the gradient of its dilation $\sigma $ is
vertical, i.e. $\mathcal{H}(grad~\sigma )=0$ then a horizontally weakly
conformal map $\vartheta :(M_{1}^{d_{1}},g_{M_{1}})\rightarrow
(M_{2}^{d_{2}},g_{M_{2}})$ is called horizontally homothetic. If $\vartheta $
has no critical nodes then horizontally weakly conformal map $\vartheta $ is
called horizontally conformal submersion (HCS) \cite{Baird_2003}. Therefore,
a Riemannian submersion is a HCS with dilation identically one.

If $r$ is the distance to the axis of a surface of revolution and $\theta $
is the angle between a meridian and the velocity vector of a geodesic, then
Clairaut's relation \cite{doCarmo} states that $r\sin \theta $ is constant.
In \cite{Bishop_1972}, Bishop gave Clairaut submersions and provided a
necessary and sufficient condition for Riemannian submersions to be Clairaut
Riemannian submersions. Further the generalization of Clairaut Riemannian
submersions as Clairaut Riemannian maps were introduced in \cite{Sahin_2017a}
and \cite{Yadav_Clairaut Riem_base}. Recently, the notion of Clairaut
conformal submersion (CCS) is introduced by Meena and Zawadzki in \cite%
{Meena_2022}. The authors related the geometry of Clairaut Riemannian
submersions with the CCS through conformal changes in the metric.

In \cite{Hamilton_1988}, Hamilton came up with the idea of Ricci soliton to
obtain a desired metric on a Riemannian manifold $(M_{1},g_{M_{1}})$. The
Ricci flow is an evolution equation (it can also be called the heat
equation) and can be formulated as%
\begin{equation*}
\frac{\partial }{\partial s}g_{M_{1}}(s)=-2Ric.
\end{equation*}%
Furthermore, we can say that Ricci solitons are a generalization of an
Einstein metric and the self-similar solutions of Ricci flow are Ricci
solitons \cite{Besse_1987}. $(M_{1},g_{M_{1}},\xi ,\mu )$ is called Ricci
soliton if there exists a potential vector field (PVF) $\xi $ which
satisfies
\begin{equation}
\frac{1}{2}(L_{\xi }g_{M_{1}})+Ric+\mu g_{M_{1}}=0,  \label{eqn1.2}
\end{equation}%
where $L_{\xi }g_{M_{1}}$ indicates the Lie derivative of $g_{M_{1}}$, $Ric$
indicates the Ricci tensor of Riemannian manifold and $\mu $ indicates a
constant. If $\mu <0,$ $\mu =0$ or $\mu >0$, then $(M_{1},g_{M_{1}},\xi ,\mu
)$ is said to be shrinking, steady or expanding respectively. Furthermore, $%
\xi $ is said to be conformal vector field \cite{Deshmukh_2014} if it
provides $L_{\xi }g_{M_{1}}=2\beta g_{M_{1}}$, here $\beta $ is the
potential function of PVF.

In \cite{Perleman_2002}, Perelman showed that Ricci solitons are very useful
to solve the Poincar\'{e} conjecture. In \cite{Pigola_2011}, Pigola et al.,
by considering $\mu $ as a variable function, gave almost Ricci soliton. In
this way, the geometry of the Ricci soliton became the most important
subject of research because of its wide applications in theoretical physics
and its geometric significance.

Lately, Riemannian submersions whose total manifolds admit Ricci soliton,
almost Ricci soliton, almost $\eta $-Ricci-Bourguignon soliton, $\eta $%
-Ricci soliton, $\eta $-Ricci-Yamabe soliton, almost Yamabe soliton,
conformal $\eta $-Ricci soliton were investigated in \cite{Meric_2019}, \cite%
{Bejan_2021}, \cite{Chaubev_2022}, \cite{Fatima_2021}, \cite{Meric_2020},
\cite{Siddiqi_2020} and \cite{Siddiqui_2022}. Also, Riemannian maps whose
total or base manifolds admit a Ricci soliton were investigated in \cite%
{Yadav_Riem_total}, \cite{Yadav_Clairaut Riem_total}, \cite{GGupta_2022},
\cite{Yadav_Riem_base} and \cite{Yadav_Clairaut Riem_base}. Recently, in
\cite{Meena_2023}, conformal submersions whose total manifolds admitting a
Ricci soliton studied by Meena and Yadav. Also, Polat studied Clairaut
pointwise slant Submersion from locally product Riemannian manifolds in \cite%
{polat}. We investigate CCS whose total manifolds admitting a Ricci soliton
in this current article.

The article is organized as: in Sect. \ref{sec2}, for the conformal
submersion and CCS which are required for this article, we gather some basic
concepts. In Sect. \ref{sec3}, we obtain the scalar curvature and Ricci
tensor by using Ricci soliton. Then we provide necessary conditions for the
fibers of CCS $\vartheta $ to be almost Ricci soliton and Einstein.
Moreover, we provide necessary conditions for the base manifold to be Ricci
soliton and Einstein. Also, we discuss the harmonicity of CCS between
Riemannian manifolds whose total space is Ricci soliton. Finally, we prepare
a illustrative example to support the correctness of the theory of this
article.

\section{Preliminaries}

\label{sec2}

In this part, we remind a brief review of some basic facts for the notion of
submersion, Riemannian submersion, conformal submersion and CCS between
Riemannian manifolds.

Let $\vartheta :(M_{1}^{d_{1}},g_{M_{1}})\rightarrow
(M_{2}^{d_{2}},g_{M_{2}})$ be a map between Riemannian manifolds $%
(M_{1}^{d_{1}},g_{M_{1}})$ and $(M_{2}^{d_{2}},g_{M_{2}}).$ Then $\vartheta $
is called $C^{\infty }$-submersion if it has maximal rank at every node $%
p_{1}\in M_{1}$. The fibers of $\vartheta $ is defined as $\vartheta
^{-1}(p_{2}),$ for any $p_{2}\in M_{2}$. The vectors tangent to fibers build
the smooth vertical distribution denoting $\nu _{p_{1}}=Ker\vartheta _{\ast
p_{1}}$ and its orthogonal complementary is called horizontal distribution
denoting $\mathcal{H}_{p_{1}}$. $\nu $ and $\mathcal{H}$ indicate
projections on the vertical and horizontal distributions, respectively. A
vector field $D$ on $M_{1}$ is said to be projectable if there exists a
vector field $\tilde{D}$ on $M_{2}$ such that $\vartheta _{\ast p_{1}}(D)=%
\tilde{D}_{\vartheta (p_{1})}$. Then $D$ and $\tilde{D}$ are called $%
\vartheta $-related. So, $D$ is called the horizontal lift of $\tilde{D}$.

Let $\vartheta :(M_{1}^{d_{1}},g_{M_{1}})\rightarrow
(M_{2}^{d_{2}},g_{M_{2}})$ be a submersion between Riemannian manifolds $%
(M_{1}^{d_{1}},g_{M_{1}})$ and $(M_{2}^{d_{2}},g_{M_{2}}).$ We say that $%
\vartheta $ is a Riemannian submersion if $\vartheta _{\ast p_{1}}$
preserves the length of the horizontal vectors.

Let $\vartheta :(M_{1}^{d_{1}},g_{M_{1}})\rightarrow
(M_{2}^{d_{2}},g_{M_{2}})$ be a Riemannian submersion. If $\vartheta _{\ast
} $ restricted to horizontal distribution of $\vartheta $ is a conformal
map, namely,
\begin{equation}
g_{M_{2}}(\vartheta _{\ast }X_{1},\vartheta _{\ast }X_{2})=\sigma
^{2}(p_{1})g_{M_{1}}(X_{1},X_{2}),  \label{eqn2.1}
\end{equation}%
$\forall X_{1},X_{2}\in \Gamma (Ker\vartheta _{\ast })^{\bot }~$and$%
~p_{1}\in M_{1}$ for a smooth function $\sigma :M_{1}\rightarrow \mathbb{R}%
^{+},$ then $\vartheta $ is said to be conformal submersion. The O'Neill
tensors $S$ and $T$ formulated as \cite{Neill_1966}
\begin{equation}
S_{E_{1}}E_{2}=\mathcal{H}\nabla _{\mathcal{H}E_{1}}\nu E_{2}+\nu \nabla _{%
\mathcal{H}E_{1}}\mathcal{H}E_{2},  \label{eqn2.2}
\end{equation}%
\begin{equation}
T_{E_{1}}E_{2}=\mathcal{H}\nabla _{\nu E_{1}}\nu E_{2}+\nu \nabla _{\nu
E_{1}}\mathcal{H}E_{2},  \label{eqn2.3}
\end{equation}%
for any $E_{1},E_{2}\in \Gamma (TM_{1})$, here $\nabla $ denotes the
Levi-Civita connection of metric tensor $g_{M_{1}}$. $\forall E_{1}\in
\Gamma (TM_{1})$, $T_{E_{1}}$ and $S_{E_{1}}$ are skew-symmetric operators
on $(\Gamma (TM_{1}),g_{M_{1}}).$ We can easily see that $T_{E_{1}}=T_{\nu
E_{1}}$ i.e, $T$ is vertical, and $S_{E_{1}}=S_{\mathcal{H}E_{1}}$ i.e, $S$
is horizontal and $T$ satisfies $T_{U_{1}}U_{2}=T_{U_{2}}U_{1},~\forall
U_{1},U_{2}\in \Gamma (Ker\vartheta _{\ast })$. Now, from (\ref{eqn2.2}) and
(\ref{eqn2.3}), we have
\begin{equation}
\nabla _{U_{1}}U_{2}=T_{U_{1}}U_{2}+\nu \nabla _{U_{1}}U_{2},  \label{eqn2.4}
\end{equation}%
\begin{equation}
\nabla _{X_{1}}U_{1}=S_{X_{1}}U_{1}+\nu \nabla _{X_{1}}U_{1},  \label{eqn2.5}
\end{equation}%
\begin{equation}
\nabla _{X_{1}}X_{2}=S_{X_{1}}X_{2}+\mathcal{H}\nabla _{X_{1}}X_{2},
\label{eqn2.6}
\end{equation}%
where $U_{1},U_{2}\in \Gamma (Ker\vartheta _{\ast })$ and $X_{1},X_{2}\in
\Gamma (Ker\vartheta _{\ast })^{\bot }$. A conformal submersion $\vartheta $
with
\begin{equation}
T_{U_{1}}U_{2}=g_{M_{1}}(U_{1},U_{2})H~\text{or}%
~T_{U_{1}}X_{1}=-g_{M_{1}}(H,X_{1})U_{1},  \label{eqn2.7}
\end{equation}%
is called as totally umbilical fibers \cite{Zawadzki_2014, Zawadzki_2020}.
Herein, $\forall U_{1},U_{2}\in \Gamma (Ker\vartheta _{\ast })$, $X_{1}\in
\Gamma (Ker\vartheta _{\ast })^{\bot }$ and $H$ denotes the mean curvature
vector field of the fibers.

\begin{proposition}
\label{prop2.1} \cite{Gundmundsson_1992} Let $\vartheta
:(M_{1},g_{M_{1}})\rightarrow (M_{2},g_{M_{2}})$ be a HCS such that $%
(Ker\vartheta _{\ast })^{\bot }$ is integrable. Then the horizontal space is
totally umbilical in $(M_{1},g_{M_{1}})$, i.e. $%
S_{X_{1}}X_{2}=g_{M_{1}}(X_{1},X_{2})H^{\prime }~\forall X_{1},X_{2}\in
\Gamma (Ker\vartheta _{\ast })^{\bot }$, where $H^{\prime }$ is the mean
curvature vector field of $(Ker\vartheta _{\ast })^{\bot }$ and stated
\begin{equation}
H^{\prime }=\left( \nabla _{\nu }\frac{1}{\sigma ^{2}}\right) -\frac{\sigma
^{2}}{2}.  \label{eqn2.9}
\end{equation}
\end{proposition}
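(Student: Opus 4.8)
The plan is to compute the vertical part $S_{X_1}X_2=\nu\nabla_{X_1}X_2$ for horizontal fields $X_1,X_2\in\Gamma(Ker\vartheta_\ast)^\bot$ directly from the Koszul formula, feeding in the integrability hypothesis together with the conformality relation (\ref{eqn2.1}). Since $S$ is $C^\infty(M_1)$-bilinear (it is a tensor) it is enough to run the computation when $X_1,X_2$ are basic (horizontal lifts of vector fields on $M_2$) and to pair against an arbitrary vertical $V\in\Gamma(Ker\vartheta_\ast)$; the resulting pointwise proportionality $S_{X_1}X_2=g_{M_1}(X_1,X_2)H^{\prime}$ then extends to all horizontal fields by tensoriality. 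As a guiding remark, torsion-freeness of $\nabla$ gives $S_{X_1}X_2-S_{X_2}X_1=\nu[X_1,X_2]$, so integrability of $(Ker\vartheta_\ast)^\bot$ already forces $S$ to be symmetric on horizontal arguments; the content of the proposition is that this symmetric tensor is a pure trace.

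First I would expand $2g_{M_1}(\nabla_{X_1}X_2,V)$ by Koszul. The terms $X_1 g_{M_1}(X_2,V)$ and $X_2 g_{M_1}(X_1,V)$ vanish since horizontal and vertical fields are $g_{M_1}$-orthogonal; the term $g_{M_1}([X_1,X_2],V)$ vanishes because $[X_1,X_2]$ is horizontal by the integrability hypothesis; and $g_{M_1}([X_2,V],X_1)$ and $g_{M_1}([V,X_1],X_2)$ vanish because the Lie bracket of a basic horizontal field with a vertical field is again vertical (here one uses $\vartheta_\ast V=0$, so $[X_i,V]$ is $\vartheta$-related to $0$). What survives is $2g_{M_1}(\nabla_{X_1}X_2,V)=-V\!\left(g_{M_1}(X_1,X_2)\right)$.

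Next I would rewrite $g_{M_1}(X_1,X_2)=\tfrac{1}{\sigma^2}\left(g_{M_2}(\vartheta_\ast X_1,\vartheta_\ast X_2)\circ\vartheta\right)$ from (\ref{eqn2.1}). Since $X_1,X_2$ are basic, the factor $g_{M_2}(\vartheta_\ast X_1,\vartheta_\ast X_2)\circ\vartheta$ is the pullback of a function on $M_2$ and so is annihilated by $V$; hence $V\!\left(g_{M_1}(X_1,X_2)\right)=\sigma^2\, V\!\left(\tfrac{1}{\sigma^2}\right) g_{M_1}(X_1,X_2)=\sigma^2\, g_{M_1}\!\left(grad\,\tfrac{1}{\sigma^2},V\right) g_{M_1}(X_1,X_2)$. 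Combining with the previous step, and using that $S_{X_1}X_2=\nu\nabla_{X_1}X_2$ is vertical, gives $g_{M_1}(S_{X_1}X_2,V)=-\tfrac{\sigma^2}{2}\,g_{M_1}(X_1,X_2)\,g_{M_1}\!\left(\nu\!\left(grad\,\tfrac{1}{\sigma^2}\right),V\right)$ for every vertical $V$, whence $S_{X_1}X_2=g_{M_1}(X_1,X_2)H^{\prime}$ with $H^{\prime}=-\tfrac{\sigma^2}{2}\,\nu\!\left(grad\,\tfrac{1}{\sigma^2}\right)$, i.e. the stated formula (\ref{eqn2.9}) with $\nabla_\nu\tfrac{1}{\sigma^2}$ read as the vertical gradient of $\tfrac{1}{\sigma^2}$. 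Since $H^{\prime}$ is independent of $X_1,X_2$, total umbilicity follows, and tracing $S_{X_1}X_2=g_{M_1}(X_1,X_2)H^{\prime}$ over a horizontal orthonormal frame identifies $H^{\prime}$ with the mean curvature vector field of the (now integrable) horizontal distribution.

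The Koszul expansion and the conformal rescaling are routine; I expect the only delicate points to be invoking cleanly the two structural facts — that the bracket of a basic horizontal field with a vertical field stays vertical, and that $g_{M_2}(\vartheta_\ast X_1,\vartheta_\ast X_2)\circ\vartheta$ is fibrewise constant for basic $X_1,X_2$ — and then verifying that the identity, established for basic fields, propagates to all horizontal fields via the $C^\infty(M_1)$-tensoriality of $S$ and of $g_{M_1}$.
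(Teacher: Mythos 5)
Your proof is correct. Note that the paper itself gives no proof of this proposition (it is quoted from Gudmundsson's thesis), so the only internal point of comparison is Proposition \ref{prp1}, which states $S_{X_{1}}X_{2}=\frac{1}{2}\{\nu[X_{1},X_{2}]-\sigma^{2}g_{M_{1}}(X_{1},X_{2})(\nabla_{\nu}\frac{1}{\sigma^{2}})\}$; under the integrability hypothesis the bracket term vanishes and the statement follows in one line. Your Koszul computation in effect re-derives that formula in the integrable case from first principles, and each step is justified correctly: the vanishing of the metric and bracket terms (orthogonality of the two distributions, horizontality of $[X_{1},X_{2}]$, verticality of the bracket of a basic horizontal field with a vertical field), the use of (\ref{eqn2.1}) through the fibrewise constancy of $g_{M_{2}}(\vartheta_{\ast}X_{1},\vartheta_{\ast}X_{2})\circ\vartheta$ for basic $X_{1},X_{2}$, and the reduction to basic fields via the $C^{\infty}(M_{1})$-tensoriality of $S$. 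You were also right to read the displayed formula (\ref{eqn2.9}) as the product $H^{\prime}=-\frac{\sigma^{2}}{2}\,\nu\bigl(\mathrm{grad}\,\frac{1}{\sigma^{2}}\bigr)$ rather than the difference printed in the paper: as printed it is not even a well-formed vector field, and the value you obtain is the one consistent with Proposition \ref{prp1} and with the identification of $H^{\prime}$ as the mean curvature of the horizontal distribution obtained by tracing over a horizontal orthonormal frame.
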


The second fundamental form of $\vartheta $ \cite{Nore_1966} is given by $%
(\nabla \vartheta _{\ast })(X_{1},X_{2})={\nabla }_{X_{1}}^{\vartheta
}\vartheta _{\ast }X_{2}-\vartheta _{\ast }({\nabla }%
_{X_{1}}^{M_{1}}X_{2}),~\forall X_{1},X_{2}\in \Gamma (TM_{1})$, or
\begin{equation}
(\nabla \vartheta _{\ast })(X_{1},X_{2})={\nabla }_{\vartheta _{\ast
}X_{1}}^{M_{2}}\vartheta _{\ast }X_{2}-\vartheta _{\ast }({\nabla }%
_{X_{1}}^{M_{1}}X_{2}),~\forall X_{1},X_{2}\in \Gamma (TM_{1}).
\label{eqn2.10}
\end{equation}

\begin{lemma}
\label{lem2.1} \cite{Gundmundsson_1992} Let $\vartheta
:(M_{1}^{d_{1}},g_{M_{1}})\rightarrow (M_{2}^{d_{2}},g_{M_{2}})$ be a HCS.
Then
\begin{eqnarray*}
&&\vartheta _{\ast }(\mathcal{H}\nabla _{X_{1}}X_{2}) \\
&=&\frac{\sigma ^{2}}{2}\left\{ -g_{M_{1}}(X_{1},X_{2})\vartheta _{\ast
}(grad_{\mathcal{H}}~(\frac{1}{\sigma ^{2}}))+X_{1}(\frac{1}{\sigma ^{2}}%
)\vartheta _{\ast }X_{2}+X_{2}(\frac{1}{\sigma ^{2}})\vartheta _{\ast
}X_{1}\right\} \\
&&+\nabla _{\vartheta _{\ast }X_{1}}^{M_{2}}\vartheta _{\ast }X_{2},
\end{eqnarray*}%
where $\nabla $ Levi-Civita connection on $M_{1}$ and $X_{1},X_{2}$ are
basic vector fields.
\end{lemma}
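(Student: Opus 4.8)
The plan is to prove the identity by comparing the Levi-Civita connections of $M_1$ and $M_2$ on basic horizontal vector fields, using the Koszul formula on each manifold together with the conformal relation (\ref{eqn2.1}) as the dictionary between the two metrics; equivalently, by (\ref{eqn2.10}) this amounts to evaluating the second fundamental form $(\nabla\vartheta_{\ast})(X_1,X_2)$ on horizontal basic fields. First I would fix basic vector fields $X_1,X_2,X_3\in\Gamma((Ker\vartheta_{\ast})^{\bot})$, $\vartheta$-related to $\tilde{X}_1,\tilde{X}_2,\tilde{X}_3$ on $M_2$, and record two facts: (i) $\mathcal{H}[X_i,X_j]$ is the horizontal lift of $[\tilde{X}_i,\tilde{X}_j]$, so $g_{M_2}([\tilde{X}_i,\tilde{X}_j],\tilde{X}_k)\circ\vartheta=\sigma^{2}g_{M_1}([X_i,X_j],X_k)$, the vertical part of $[X_i,X_j]$ dropping out because $X_k$ is horizontal; (ii) from (\ref{eqn2.1}), $g_{M_2}(\tilde{X}_i,\tilde{X}_j)\circ\vartheta=\sigma^{2}g_{M_1}(X_i,X_j)$, so by the chain rule $X_i(h\circ\vartheta)=(\tilde{X}_i h)\circ\vartheta$ one obtains $\tilde{X}_i\big(g_{M_2}(\tilde{X}_j,\tilde{X}_k)\big)\circ\vartheta=X_i(\sigma^{2})\,g_{M_1}(X_j,X_k)+\sigma^{2}\,X_i\big(g_{M_1}(X_j,X_k)\big)$.

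Next I would expand $2g_{M_2}(\nabla^{M_2}_{\tilde{X}_1}\tilde{X}_2,\tilde{X}_3)$ by the Koszul formula and substitute (i)--(ii). All the terms carrying a factor $\sigma^{2}$ reassemble, via the Koszul formula on $M_1$, into $2\sigma^{2}g_{M_1}(\nabla^{M_1}_{X_1}X_2,X_3)=2\sigma^{2}g_{M_1}(\mathcal{H}\nabla_{X_1}X_2,X_3)$ (again using that $X_3$ is horizontal), while the remaining terms are $X_1(\sigma^{2})g_{M_1}(X_2,X_3)+X_2(\sigma^{2})g_{M_1}(X_1,X_3)-X_3(\sigma^{2})g_{M_1}(X_1,X_2)$. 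Writing $\nabla^{M_2}_{\tilde{X}_1}\tilde{X}_2=\vartheta_{\ast}A$ with $A$ its horizontal lift, the left-hand side equals $2\sigma^{2}g_{M_1}(A,X_3)$ by (\ref{eqn2.1}); dividing through by $\sigma^{2}$ and using $X_i(\sigma^{2})=-\sigma^{4}X_i(\tfrac{1}{\sigma^{2}})$ then yields
\begin{equation*}
g_{M_1}(\mathcal{H}\nabla_{X_1}X_2,X_3)=g_{M_1}(A,X_3)+\frac{\sigma^{2}}{2}\Big\{X_1(\tfrac{1}{\sigma^{2}})g_{M_1}(X_2,X_3)+X_2(\tfrac{1}{\sigma^{2}})g_{M_1}(X_1,X_3)-X_3(\tfrac{1}{\sigma^{2}})g_{M_1}(X_1,X_2)\Big\}.
\end{equation*}

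Finally I would push this forward by $\vartheta_{\ast}$: multiplying by $\sigma^{2}$ and applying (\ref{eqn2.1}) turns every $\sigma^{2}g_{M_1}(\,\cdot\,,X_3)$ into $g_{M_2}(\vartheta_{\ast}\,\cdot\,,\vartheta_{\ast}X_3)$, and for the last term I would also use $X_3(\tfrac{1}{\sigma^{2}})=\sigma^{-2}g_{M_2}\big(\vartheta_{\ast}(grad_{\mathcal{H}}(\tfrac{1}{\sigma^{2}})),\vartheta_{\ast}X_3\big)$ together with $g_{M_2}(\vartheta_{\ast}X_1,\vartheta_{\ast}X_2)=\sigma^{2}g_{M_1}(X_1,X_2)$ to rewrite it as $g_{M_1}(X_1,X_2)\,g_{M_2}\big(\vartheta_{\ast}(grad_{\mathcal{H}}(\tfrac{1}{\sigma^{2}})),\vartheta_{\ast}X_3\big)$. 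Since $\vartheta$ is a submersion the vectors $\vartheta_{\ast}X_3$ span $T_{\vartheta(p)}M_2$ at each point of the image and $g_{M_2}$ is nondegenerate, so I may cancel $\vartheta_{\ast}X_3$ and read off the stated formula (with $\nabla^{M_2}_{\vartheta_{\ast}X_1}\vartheta_{\ast}X_2=\vartheta_{\ast}A$, after reordering the terms in braces).

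There is no deep difficulty here; the point to handle with care is the bookkeeping at the interface of the two manifolds — applying the chain rule $X_i(h\circ\vartheta)=(\tilde{X}_i h)\circ\vartheta$ correctly when differentiating $g_{M_2}(\tilde{X}_j,\tilde{X}_k)\circ\vartheta$, discarding the vertical components of $\nabla_{X_1}X_2$ and of $[X_i,X_j]$ only after pairing with a horizontal vector, and making sure the final cancellation is legitimate, so that all identities are read as identities of fields along $\vartheta$ over the image of $\vartheta$ (which is all of $M_2$ for a submersion onto $M_2$).
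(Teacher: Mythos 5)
Your argument is correct: comparing the Koszul formulas of the two Levi-Civita connections, translated through the conformal relation (\ref{eqn2.1}), the chain rule for $\vartheta$-related basic fields, and the fact that $\mathcal{H}[X_i,X_j]$ is the horizontal lift of $[\tilde{X}_i,\tilde{X}_j]$, lands exactly on the stated identity (signs and the factor $\frac{\sigma^{2}}{2}$ included), and the final cancellation of $\vartheta_{\ast}X_{3}$ is legitimate since $\vartheta$ is a submersion and $g_{M_{2}}$ is nondegenerate. Note that the paper gives no proof of this lemma — it is quoted from \cite{Gundmundsson_1992} — and your derivation is essentially the standard argument from that source.
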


Now we give general information about the gradient $(\func{grad})$,
divergence $(\func{div})$ and Hessian $(Hess)$ \cite{Sahin_2017}. Let $\beta
\in $\QTR{cal}{F}$(M_{1})$, then gradient of function $\beta $, indicated by
$grad\beta $ or $\nabla \beta $, stated
\begin{equation}
g_{M_{1}}(grad~\beta ,X_{1})=X_{1}(\beta ),  \label{eqn2.17}
\end{equation}%
for $X_{1}\in \Gamma (TM_{1}).$ Let $\{e_{k}\}_{1\leq k\leq d_{1}}$ be an
orthonormal basis of $T_{p_{1}}M_{1}$ then, we have
\begin{equation}
g_{M_{1}}(X_{1},X_{2})=%
\sum_{k=1}^{d_{1}}g_{M_{1}}(X_{1},e_{k})g_{M_{1}}(X_{2},e_{k}).
\label{eqn2.18}
\end{equation}%
The divergence of $X_{1}$ given by
\begin{equation}
div(X_{1})=\sum_{k=1}^{d_{1}}g_{M_{1}}(\nabla _{e_{k}}X_{1},e_{k}),
\label{eqn2.19}
\end{equation}%
for any $X_{1}\in \Gamma (TM_{1}).$ The Hessian tensor $h_{\beta }$ of $%
\beta $ is given by
\begin{equation*}
h_{\beta }(X_{1})=\nabla _{X_{1}}\nabla \beta ,
\end{equation*}%
for $X_{1}\in \Gamma (TM_{1}).$ The Hessian form of $\beta $ is given by
\begin{equation}
Hess\beta (X_{1},X_{2})=g_{M_{1}}(h_{\beta }(X_{1}),X_{2}),  \label{eqn2.21}
\end{equation}%
for any $X_{1},X_{2}\in \Gamma (TM_{1}).$ The Laplacian of $\beta $ is given
by
\begin{equation}
\Delta \beta =div(\nabla \beta ).  \label{eqn2.22}
\end{equation}

\begin{lemma}
\label{lem2.2} \cite{Gundmundsson_1992} Let $\beta :M_{1}\rightarrow \mathbb{%
R}$ be a smooth function and $(M_{1},g_{M_{1}})$ be a Riemannian manifold.
Then
\begin{equation*}
g_{M_{1}}(\nabla _{X_{1}}grad(\beta ),X_{2})=g_{M_{1}}(\nabla
_{X_{2}}grad(\beta ),X_{1}),
\end{equation*}

for$~X_{1},X_{2}\in \Gamma (TM_{1}).$
\end{lemma}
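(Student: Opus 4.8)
The plan is to unwind both sides of the asserted identity using only the two defining properties of the Levi-Civita connection $\nabla $ of $g_{M_{1}}$: its compatibility with the metric and the vanishing of its torsion. First I would recall from the definition of the gradient in (\ref{eqn2.17}) that $g_{M_{1}}(grad~\beta ,X_{2})=X_{2}(\beta )$ for every $X_{2}\in \Gamma (TM_{1})$. Differentiating this identity along $X_{1}$ and using compatibility of $\nabla $ with $g_{M_{1}}$ gives
\[
X_{1}(X_{2}(\beta ))=X_{1}\big(g_{M_{1}}(grad~\beta ,X_{2})\big)=g_{M_{1}}(\nabla _{X_{1}}grad~\beta ,X_{2})+g_{M_{1}}(grad~\beta ,\nabla _{X_{1}}X_{2}),
\]
and by (\ref{eqn2.17}) the last term equals $(\nabla _{X_{1}}X_{2})(\beta )$. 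Hence $g_{M_{1}}(\nabla _{X_{1}}grad~\beta ,X_{2})=X_{1}(X_{2}(\beta ))-(\nabla _{X_{1}}X_{2})(\beta )$.

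Next I would interchange the roles of $X_{1}$ and $X_{2}$ to obtain the analogous expression $g_{M_{1}}(\nabla _{X_{2}}grad~\beta ,X_{1})=X_{2}(X_{1}(\beta ))-(\nabla _{X_{2}}X_{1})(\beta )$. Subtracting the two identities, the difference of the first terms is $[X_{1},X_{2}](\beta )$ by the definition of the Lie bracket, while the difference of the second terms is $(\nabla _{X_{1}}X_{2}-\nabla _{X_{2}}X_{1})(\beta )$. Since $\nabla $ is torsion-free we have $\nabla _{X_{1}}X_{2}-\nabla _{X_{2}}X_{1}=[X_{1},X_{2}]$, so the two contributions cancel and the stated equality follows.

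There is essentially no serious obstacle here; the statement is just the symmetry of the Hessian form $Hess\beta $ of (\ref{eqn2.21}), and the only points requiring care are to split the derivative of the inner product into exactly the two terms dictated by metric compatibility, and to invoke the torsion-free identity rather than assuming the vector fields commute. One could equally well derive the same cancellation from the Koszul formula, but the computation above is the most direct.
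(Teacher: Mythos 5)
Your argument is correct and complete: computing $g_{M_{1}}(\nabla _{X_{1}}grad(\beta ),X_{2})=X_{1}(X_{2}(\beta ))-(\nabla _{X_{1}}X_{2})(\beta )$ via metric compatibility and then cancelling the antisymmetric part with the torsion-free identity $\nabla _{X_{1}}X_{2}-\nabla _{X_{2}}X_{1}=[X_{1},X_{2}]$ is exactly the standard proof of the symmetry of $Hess\beta $. Note that the paper offers no proof of Lemma \ref{lem2.2} at all --- it is quoted from \cite{Gundmundsson_1992} as a preliminary --- so there is nothing in the text to compare against; your derivation simply supplies the routine verification that the citation stands in for, and it does so correctly.
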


\begin{proposition}
\label{prp1}\cite{Gundmundsson_1992} Let $\vartheta
:(M_{1},g_{M_{1}})\rightarrow (M_{2},g_{M_{2}})$ be a HCS with dilation $%
\sigma $. Then
\begin{equation*}
S_{X_{1}}X_{2}=\frac{1}{2}\left\{ \nu \lbrack X_{1},X_{2}]-\sigma
^{2}g_{M_{1}}(X_{1},X_{2})\left( \nabla _{\nu }\frac{1}{\sigma ^{2}}\right)
\right\} ,
\end{equation*}

for any $X_{1},X_{2}\in \Gamma (Ker\vartheta _{\ast })^{\bot }.$
\end{proposition}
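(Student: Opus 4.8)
The plan is to prove the identity for basic vector fields and then invoke tensoriality. Since $S$ is a $(1,2)$-tensor field, $S_{X_{1}}X_{2}$ at a point depends only on the values of $X_{1},X_{2}$ there, so it suffices to establish the formula when $X_{1},X_{2}$ are horizontal lifts of vector fields $\tilde{X}_{1},\tilde{X}_{2}$ on $M_{2}$; such fields span $(Ker\vartheta_{\ast})^{\bot}$ pointwise. Note first that by (\ref{eqn2.2}), for horizontal $X_{1},X_{2}$ we simply have $S_{X_{1}}X_{2}=\nu\nabla_{X_{1}}X_{2}$. Because $\nabla$ is torsion-free, $\nabla_{X_{1}}X_{2}=\tfrac12(\nabla_{X_{1}}X_{2}+\nabla_{X_{2}}X_{1})+\tfrac12[X_{1},X_{2}]$, and hence $S_{X_{1}}X_{2}=\tfrac12\nu[X_{1},X_{2}]+\tfrac12\,\nu(\nabla_{X_{1}}X_{2}+\nabla_{X_{2}}X_{1})$. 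The first summand already matches the first term in the claimed expression, so everything reduces to computing the vertical part of the symmetric sum.

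To do that I would pair $\nabla_{X_{1}}X_{2}+\nabla_{X_{2}}X_{1}$ with an arbitrary vertical field $V$ and use metric compatibility of $\nabla$ together with $g_{M_{1}}(X_{i},V)=0$, which gives $g_{M_{1}}(\nabla_{X_{1}}X_{2}+\nabla_{X_{2}}X_{1},V)=-g_{M_{1}}(\nabla_{X_{1}}V,X_{2})-g_{M_{1}}(\nabla_{X_{2}}V,X_{1})$. The key structural fact here is that for a vertical $V$ and a basic $X_{i}$ the bracket $[V,X_{i}]$ is vertical, since $\vartheta_{\ast}[V,X_{i}]=[\vartheta_{\ast}V,\vartheta_{\ast}X_{i}]=[0,\tilde{X}_{i}]=0$; consequently $\mathcal{H}\nabla_{X_{i}}V=\mathcal{H}\nabla_{V}X_{i}$ and so $g_{M_{1}}(\nabla_{X_{i}}V,X_{j})=g_{M_{1}}(\nabla_{V}X_{i},X_{j})$. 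Substituting and using metric compatibility once more, $g_{M_{1}}(\nabla_{X_{1}}X_{2}+\nabla_{X_{2}}X_{1},V)=-g_{M_{1}}(\nabla_{V}X_{1},X_{2})-g_{M_{1}}(\nabla_{V}X_{2},X_{1})=-V\,g_{M_{1}}(X_{1},X_{2})$.

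Then I would bring in horizontal conformality. From (\ref{eqn2.1}), $g_{M_{1}}(X_{1},X_{2})=\tfrac{1}{\sigma^{2}}\,\bigl(g_{M_{2}}(\tilde{X}_{1},\tilde{X}_{2})\circ\vartheta\bigr)$; differentiating along $V$ and using that $\vartheta_{\ast}V=0$ kills the derivative of the $\vartheta$-pullback factor, one obtains $V\,g_{M_{1}}(X_{1},X_{2})=V\!\left(\tfrac{1}{\sigma^{2}}\right)\sigma^{2}g_{M_{1}}(X_{1},X_{2})$. Finally $V\!\left(\tfrac{1}{\sigma^{2}}\right)=g_{M_{1}}\!\left(grad\,\tfrac{1}{\sigma^{2}},V\right)=g_{M_{1}}\!\left(\nu\,grad\,\tfrac{1}{\sigma^{2}},V\right)$ since $V$ is vertical, i.e. $V\!\left(\tfrac{1}{\sigma^{2}}\right)=g_{M_{1}}\!\left(\left(\nabla_{\nu}\tfrac{1}{\sigma^{2}}\right),V\right)$ in the paper's notation. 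Combining the last two paragraphs gives $g_{M_{1}}\!\left(\nu(\nabla_{X_{1}}X_{2}+\nabla_{X_{2}}X_{1}),V\right)=-\sigma^{2}g_{M_{1}}(X_{1},X_{2})\,g_{M_{1}}\!\left(\left(\nabla_{\nu}\tfrac{1}{\sigma^{2}}\right),V\right)$ for every vertical $V$; non-degeneracy of $g_{M_{1}}$ on the vertical distribution yields $\nu(\nabla_{X_{1}}X_{2}+\nabla_{X_{2}}X_{1})=-\sigma^{2}g_{M_{1}}(X_{1},X_{2})\left(\nabla_{\nu}\tfrac{1}{\sigma^{2}}\right)$, and plugging this into the decomposition of $S_{X_{1}}X_{2}$ above finishes the proof.

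I expect the main obstacle to be careful bookkeeping rather than any genuine difficulty: one has to justify the reduction to basic vector fields (tensoriality of $S$), correctly use the projectability facts—namely that $[V,X_{i}]$ is vertical and that $\vartheta_{\ast}V=0$ annihilates pullbacks from $M_{2}$—which are exactly what let the conformality relation (\ref{eqn2.1}) be differentiated usefully, and keep the meaning of $\nabla_{\nu}\tfrac{1}{\sigma^{2}}$ fixed as $\nu\,grad\,\tfrac{1}{\sigma^{2}}$ throughout.
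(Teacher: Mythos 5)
Your proof is correct. The paper gives no argument for this statement — it is quoted from \cite{Gundmundsson_1992} — so there is nothing internal to compare against; your derivation (write $S_{X_{1}}X_{2}=\nu \nabla _{X_{1}}X_{2}$, split $\nabla _{X_{1}}X_{2}$ into the skew part $\tfrac{1}{2}[X_{1},X_{2}]$ and the symmetric part, identify the vertical component of the symmetric part with $-\tfrac{1}{2}V\,g_{M_{1}}(X_{1},X_{2})$ using metric compatibility and the verticality of $[V,X_{i}]$ for basic $X_{i}$, and then differentiate the conformality relation (\ref{eqn2.1}) along $V$ using $\vartheta _{\ast }V=0$) is essentially the standard proof from Gudmundsson's thesis, and every step checks out, including the sign and the reading of $\nabla _{\nu }\tfrac{1}{\sigma ^{2}}$ as the vertical part of the gradient. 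The only bookkeeping point worth making explicit is that the reduction to basic fields needs not only tensoriality of $S$ but also that $\nu \lbrack X_{1},X_{2}]$ is function-linear in each horizontal argument; this is immediate, since replacing $X_{2}$ by $fX_{2}$ adds $(X_{1}f)X_{2}$, whose vertical projection vanishes because $X_{2}$ is horizontal. With that remark the argument is complete.
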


\begin{theorem}
\label{thr1}\cite{Meena_2023} Let $\vartheta :(M_{1},g_{M_{1}})\rightarrow
(M_{2},g_{M_{2}})$ be a HCS with dilation $\sigma $ such that $(Ker\vartheta
_{\ast })^{\bot }$ is totally geodesic. Then $\sigma $ is constant on $%
Ker\vartheta _{\ast }$.
\end{theorem}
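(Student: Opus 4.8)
The plan is to feed the totally geodesic hypothesis into Proposition \ref{prp1} and then evaluate it on a single horizontal vector field. Since, by \eqref{eqn2.6} and \eqref{eqn2.2}, one has $S_{X}X=\nu\nabla_{X}X$ for every horizontal $X$, the assumption that $(Ker\vartheta_*)^{\bot}$ is totally geodesic gives in particular $S_{X}X=0$ for all $X\in\Gamma((Ker\vartheta_*)^{\bot})$. Setting $X_1=X_2=X$ in the formula of Proposition \ref{prp1} and using $[X,X]=0$, the bracket term drops out and we are left with
\begin{equation*}
0=S_{X}X=-\frac{1}{2}\,\sigma^2\,g_{M_1}(X,X)\left(\nabla_\nu\frac{1}{\sigma^2}\right),
\end{equation*}
where $\nabla_\nu\frac{1}{\sigma^2}$ stands for the vertical component of $\operatorname{grad}(1/\sigma^2)$.

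Next I would choose $X$ to be a nowhere-vanishing horizontal vector field on a neighbourhood (possible since $d_2\ge 1$). Then $g_{M_1}(X,X)>0$ and $\sigma^2>0$, so the displayed identity forces
\begin{equation*}
\nabla_\nu\frac{1}{\sigma^2}=0,
\end{equation*}
i.e. $\operatorname{grad}(1/\sigma^2)$ has no component along $Ker\vartheta_*$.

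Finally I would convert this into the desired statement about $\sigma$. For any $U\in\Gamma(Ker\vartheta_*)$, using \eqref{eqn2.17} and the fact that $\nabla_\nu(1/\sigma^2)$ is the orthogonal projection of $\operatorname{grad}(1/\sigma^2)$ onto the vertical distribution,
\begin{equation*}
0=g_{M_1}\!\left(\nabla_\nu\frac{1}{\sigma^2},\,U\right)=g_{M_1}\!\left(\operatorname{grad}\frac{1}{\sigma^2},\,U\right)=U\!\left(\frac{1}{\sigma^2}\right)=-\frac{2}{\sigma^{3}}\,U(\sigma),
\end{equation*}
hence $U(\sigma)=0$ for every vertical $U$; this is exactly the claim that $\sigma$ is constant along the fibres, that is, on $Ker\vartheta_*$. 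I do not foresee a genuine obstacle here: the argument is essentially a one-line substitution into Proposition \ref{prp1}. The only thing that needs attention is the bookkeeping of the vertical versus horizontal parts of $\operatorname{grad}(1/\sigma^2)$, together with the observation that feeding the diagonal $X_1=X_2$ into Proposition \ref{prp1} kills the $\nu[X_1,X_2]$ term and isolates the vertical gradient of $1/\sigma^2$.
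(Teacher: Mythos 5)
Your proof is correct: the paper itself only quotes this theorem from \cite{Meena_2023} without reproducing a proof, and your argument is exactly the standard one behind it — substitute $X_{1}=X_{2}=X$ into Proposition \ref{prp1}, use total geodesy of $(Ker\vartheta_{\ast})^{\bot}$ to get $S_{X}X=\nu\nabla_{X}X=0$, and conclude from $\sigma^{2}g_{M_{1}}(X,X)>0$ that the vertical gradient $\nabla_{\nu}\tfrac{1}{\sigma^{2}}$ vanishes, whence $U(\sigma)=0$ for every vertical $U$. The bookkeeping (diagonal choice killing $\nu[X_{1},X_{2}]$, vertical projection of the gradient, and the chain rule $U(1/\sigma^{2})=-2\sigma^{-3}U(\sigma)$) is all handled correctly, so there is no gap.
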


\begin{theorem}
\label{thr2}\cite{Meena_2023} Let $\vartheta :(M_{1},g_{M_{1}})\rightarrow
(M_{2},g_{M_{2}})$ be a HCS. If $S$ is parallel then $\sigma $ is constant
on $Ker\vartheta _{\ast }$.
\end{theorem}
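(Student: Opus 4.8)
The plan is to prove that $\sigma$ is constant along every fibre $Ker\vartheta_\ast$ by showing that the vertical vector field $Z:=\nu\bigl(\mathrm{grad}\,\tfrac{1}{\sigma^{2}}\bigr)$ — the quantity written $\nabla_{\nu}\tfrac{1}{\sigma^{2}}$ in Proposition \ref{prp1} — vanishes identically; indeed by (\ref{eqn2.17}) one has $U(\tfrac{1}{\sigma^{2}})=g_{M_1}(Z,U)$ for every vertical $U$, so $Z\equiv 0$ is exactly the desired conclusion. The first step is to specialize Proposition \ref{prp1}: putting $X_1=X_2=X$ for a horizontal vector field $X$ kills the bracket term $\nu[X,X]=0$ and gives $S_X X=-\tfrac{\sigma^{2}}{2}\,g_{M_1}(X,X)\,Z$. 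Hence it is enough to show $S_X X=0$ for one nowhere-vanishing horizontal $X$. I also record, from the same proposition, the symmetric-part identity $S_A B+S_B A=-\sigma^{2}g_{M_1}(A,B)\,Z$ for horizontal $A,B$, which will be needed to cancel cross terms.

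Next I would differentiate the relation $S_X X=-\tfrac{\sigma^{2}}{2}g_{M_1}(X,X)Z$ using the hypothesis $\nabla S=0$. Fix $p\in M_1$, a vertical field $U$, and a basic horizontal field $X$ with $\vartheta_\ast X$ of constant $g_{M_2}$-length near $\vartheta(p)$, so that $g_{M_1}(X,X)=\|\vartheta_\ast X\|^{2}/\sigma^{2}$ is a function of $\sigma$ alone. Since $\nabla S=0$ reads $\nabla_U(S_X X)=S_{\nabla_U X}X+S_X(\nabla_U X)$, I would expand the right-hand side using the O'Neill relations (\ref{eqn2.4})--(\ref{eqn2.6}) together with the standard identity $\mathcal{H}\nabla_U X=S_X U$ for a basic $X$ and vertical $U$ (valid because $[U,X]$ is then vertical), and then rewrite the vertical component of the right-hand side purely in terms of $Z$ by invoking the skew-symmetry of the operators $S_A$ and the symmetric-part identity above. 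The left-hand side expands as $\nabla_U(S_X X)=-\tfrac12\,U\!\bigl(\sigma^{2}g_{M_1}(X,X)\bigr)Z-\tfrac{\sigma^{2}g_{M_1}(X,X)}{2}\,\nabla_U Z$.

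Comparing the vertical components of the two sides should produce a pointwise relation of the form $\nu\nabla_U Z=\psi\,U(\sigma)\,Z$ with $\psi$ a scalar depending only on $\sigma$, and comparing the horizontal components produces a companion constraint controlling $T_U Z=\mathcal{H}\nabla_U Z$ (for which Proposition \ref{prop2.1}, or Proposition \ref{prp1} evaluated on a vertical argument, is available). Putting $U=Z$ in the first relation and using $g_{M_1}(Z,Z)=Z(\tfrac{1}{\sigma^{2}})$, which follows from (\ref{eqn2.17}) since $Z$ is vertical, I expect the nonnegative function $\|Z\|^{2}$ to satisfy a homogeneous first-order identity along each fibre whose only solution consistent with the horizontal constraint is $\|Z\|^{2}\equiv 0$. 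A parallel bookkeeping is to trace $S_X X=-\tfrac{\sigma^{2}}{2}g_{M_1}(X,X)Z$ over a horizontal orthonormal frame $\{X_i\}$, obtaining $\sum_i S_{X_i}X_i=-\tfrac{d_2\sigma^{2}}{2}Z$, and to use the antisymmetry of the connection coefficients $\Gamma_{ij}$ of the frame, together with the symmetric-part identity, to cancel every term in $\nabla_U\bigl(\sum_i S_{X_i}X_i\bigr)$ that does not come from $(\nabla S)=0$; this reduces the statement again to a first-order condition on $\sigma^{2}\|Z\|^{2}$ along the fibres.

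The main obstacle is precisely this cancellation bookkeeping. When $S_{\nabla_U X}X+S_X(\nabla_U X)$ is expanded, several distinct vertical and horizontal pieces appear — among them $\nu\nabla_U X$ and $\mathcal{H}\nabla_X(\nu\nabla_U X)$ — and one must see that all contributions not proportional to $Z$ drop out, so that the resulting identity is genuinely an equation for $Z$ alone. This cancellation is where the skew-symmetry of the O'Neill operators, the symmetric-part identity of Proposition \ref{prp1}, and (in the trace version) the antisymmetry of the frame connection coefficients must all be used together; once it is in place, the conclusion $Z\equiv 0$, i.e. $\sigma$ constant on $Ker\vartheta_\ast$, follows.
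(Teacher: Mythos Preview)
This theorem is stated in the present paper as a quotation from \cite{Meena_2023}; no proof is given here, so there is nothing in this paper to compare your argument against.

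On its own merits, your outline has a genuine gap at the decisive step. The reduction to showing $Z:=\nu(\mathrm{grad}\,\tfrac{1}{\sigma^{2}})\equiv 0$ is correct, and differentiating $S_X X=-\tfrac{\sigma^{2}}{2}\,g_{M_1}(X,X)\,Z$ along a vertical $U$ using $(\nabla_U S)_X X=0$ does, after the cancellations you describe, yield a vertical relation of the shape $\nu\nabla_U Z=-\dfrac{U(\sigma^{2})}{\sigma^{2}}\,Z$. But this is a homogeneous linear first-order relation in $Z$ along the fibre: it does \emph{not} force $Z=0$ without either an initial condition $Z(p_0)=0$ at some point of each fibre or an independent algebraic identity of the form $\|Z\|^{2}=0$. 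You invoke ``the horizontal constraint'' to supply this, but you never write that constraint down or explain why it forces $Z$ to vanish somewhere; when one actually computes the horizontal part of the same equation one finds a relation between $T_U Z$ and $S_X(T_U X)$ that carries no obvious pointwise information about $\|Z\|$. So the phrase ``whose only solution consistent with the horizontal constraint is $\|Z\|^{2}\equiv 0$'' is precisely where the argument is missing, and the surrounding bookkeeping, however carefully done, cannot substitute for it. To close the proof you would need a different consequence of $\nabla S=0$ --- for instance one obtained by differentiating in a horizontal direction or by pairing with $S_X U$ --- that yields an algebraic (not merely differential) constraint on $Z$.
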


\begin{theorem}
\label{thr3}\cite{Meena_2023} Let $\vartheta :(M_{1},g_{M_{1}})\rightarrow
(M_{2},g_{M_{2}})$ be a HCS. Then $\vartheta $ is totally geodesic if and
only if fibers of $\vartheta $ are totally geodesic, $(Ker\vartheta _{\ast
})^{\bot }$ is totally geodesic and $\vartheta $ is homothetic.
\end{theorem}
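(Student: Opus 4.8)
The plan is to decompose $TM_{1}=Ker\vartheta_{\ast}\oplus(Ker\vartheta_{\ast})^{\bot}$ and to use that the second fundamental form $(\nabla\vartheta_{\ast})$ is symmetric, so that $\vartheta$ is totally geodesic exactly when $(\nabla\vartheta_{\ast})$ vanishes separately on pairs of vertical fields, on mixed vertical--horizontal pairs, and on pairs of horizontal fields. I would then show that these three vanishing statements are equivalent, respectively, to the three hypotheses in the theorem, after which the ``if and only if'' is immediate.

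First, for $U_{1},U_{2}\in\Gamma(Ker\vartheta_{\ast})$ we have $\vartheta_{\ast}U_{2}=0$, so the defining identity $(\ref{eqn2.10})$ together with $(\ref{eqn2.4})$ reduces $(\nabla\vartheta_{\ast})(U_{1},U_{2})$ to $-\vartheta_{\ast}(T_{U_{1}}U_{2})$ (the piece $\nu\nabla_{U_{1}}U_{2}$ is vertical, hence annihilated by $\vartheta_{\ast}$). Since $\vartheta_{\ast}$ is a linear isomorphism of the horizontal space onto $T_{\vartheta(p_{1})}M_{2}$ and $T_{U_{1}}U_{2}$ is horizontal, this term vanishes for all $U_{1},U_{2}$ precisely when $T\equiv 0$, i.e. when the fibres are totally geodesic. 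Next, for $X_{1}\in\Gamma((Ker\vartheta_{\ast})^{\bot})$ and $U_{1}\in\Gamma(Ker\vartheta_{\ast})$, the same mechanism with $(\ref{eqn2.5})$ yields $(\nabla\vartheta_{\ast})(X_{1},U_{1})=-\vartheta_{\ast}(S_{X_{1}}U_{1})$ with $S_{X_{1}}U_{1}=\mathcal{H}\nabla_{X_{1}}U_{1}$ horizontal, so this term vanishes iff $S_{X_{1}}U_{1}=0$; the skew-symmetry of $S_{X_{1}}$, i.e. $g_{M_{1}}(S_{X_{1}}U_{1},X_{2})=-g_{M_{1}}(U_{1},S_{X_{1}}X_{2})$, turns this into $S_{X_{1}}X_{2}=0$ for all horizontal $X_{1},X_{2}$, and since $S_{X_{1}}X_{2}$ is the vertical part of $\nabla_{X_{1}}X_{2}$ by $(\ref{eqn2.6})$, this is exactly the statement that $(Ker\vartheta_{\ast})^{\bot}$ is totally geodesic.

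For the remaining component, take $X_{1},X_{2}\in\Gamma((Ker\vartheta_{\ast})^{\bot})$ basic. By $(\ref{eqn2.6})$ the vertical part $S_{X_{1}}X_{2}$ of $\nabla_{X_{1}}X_{2}$ is killed by $\vartheta_{\ast}$, so inserting Lemma \ref{lem2.1} into $(\ref{eqn2.10})$ gives
\[
(\nabla\vartheta_{\ast})(X_{1},X_{2})=-\frac{\sigma^{2}}{2}\left\{-g_{M_{1}}(X_{1},X_{2})\,\vartheta_{\ast}\bigl(grad_{\mathcal{H}}\tfrac{1}{\sigma^{2}}\bigr)+X_{1}\bigl(\tfrac{1}{\sigma^{2}}\bigr)\vartheta_{\ast}X_{2}+X_{2}\bigl(\tfrac{1}{\sigma^{2}}\bigr)\vartheta_{\ast}X_{1}\right\}.
\]
If $\vartheta$ is homothetic, i.e. $\mathcal{H}(grad\,\sigma)=0$, then $grad_{\mathcal{H}}\tfrac{1}{\sigma^{2}}=0$ and each $X_{i}(\tfrac{1}{\sigma^{2}})$ vanishes, so the right-hand side is $0$; conversely, specializing to $X_{1}=X_{2}$ of unit length shows $\vartheta_{\ast}(grad_{\mathcal{H}}\tfrac{1}{\sigma^{2}})=2X_{1}(\tfrac{1}{\sigma^{2}})\vartheta_{\ast}X_{1}$ for every unit horizontal $X_{1}$, and comparing two $g_{M_{1}}$-orthonormal horizontal vectors (whose $\vartheta_{\ast}$-images are orthogonal and nonzero by conformality) forces $X_{1}(\tfrac{1}{\sigma^{2}})=0$, hence $grad_{\mathcal{H}}\tfrac{1}{\sigma^{2}}=0$, i.e. $\vartheta$ is homothetic. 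Collecting the three equivalences proves the theorem.

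The vertical--vertical and vertical--horizontal steps are pure bookkeeping with the O'Neill tensors, so the only place that calls for care — the main obstacle — is the converse in the horizontal--horizontal case: extracting $\mathcal{H}(grad\,\sigma)=0$ from the vanishing of the displayed expression, where one must track the directions of the image vectors $\vartheta_{\ast}X$ and treat a one-dimensional base $M_{2}$ separately (there one substitutes $grad_{\mathcal{H}}\tfrac{1}{\sigma^{2}}=X_{1}(\tfrac{1}{\sigma^{2}})X_{1}$ directly into the identity).
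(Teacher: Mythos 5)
Your argument is correct: splitting $(\nabla\vartheta_{\ast})$ into its vertical--vertical, mixed, and horizontal--horizontal parts, identifying these with $-\vartheta_{\ast}(T_{U_{1}}U_{2})$, $-\vartheta_{\ast}(S_{X_{1}}U_{1})$ and the dilation terms of Lemma \ref{lem2.1}, and using injectivity of $\vartheta_{\ast}$ on the horizontal space (with the $d_{2}=1$ case noted separately) is exactly the standard proof of this fact. Note that the present paper does not prove Theorem \ref{thr3} at all --- it is quoted from \cite{Meena_2023} --- and the proof there follows the same decomposition route, so your proposal matches the intended argument rather than offering a genuinely different one.
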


\begin{lemma}
\label{lem2.3} \cite{Meena_2023} Let $\vartheta
:(M_{1}^{d_{1}},g_{M_{1}})\rightarrow (M_{2}^{d_{2}},g_{M_{2}})$ be a HCS
with dilation $\sigma $ such that $(Ker\vartheta _{\ast })^{\bot }$ is
integrable. Then followings are valid:

1. $\sum%
\limits_{l=1}^{d_{2}}g_{M_{1}}(S_{X_{l}}U_{1},S_{X_{l}}U_{2})=d_{2}^{2}\frac{%
\sigma ^{4}}{4}g_{M_{1}}(\nabla _{\nu }\frac{1}{\sigma ^{2}}%
,U_{1})g_{M_{1}}(\nabla _{\nu }\frac{1}{\sigma ^{2}},U_{2})$,

2. $\sum\limits_{l=1}^{d_{2}}g_{M_{1}}((\nabla
_{U_{1}}S)_{X_{l}}X_{l},U_{2})=d_{2}g_{M_{1}}(\nabla _{U_{1}}H^{\prime
},U_{2})$,

3. $\sum\limits_{l=1}^{d_{2}}g_{M_{1}}((\nabla
_{X_{1}}S)_{X_{l}}X_{l},U_{1})=d_{2}g_{M_{1}}(\nabla _{X_{1}}H^{\prime
},U_{1})$,

4. $\sum\limits_{l=1}^{d_{2}}g_{M_{1}}((\nabla
_{X_{l}}S)_{X_{1}}X_{l},U_{1})=g_{M_{1}}(X_{1},X_{l})g_{M_{1}}(\nabla
_{X_{l}}H^{\prime },U_{1})$,

5. $\sum\limits_{k=d_{2}+1}^{d_{1}}g_{M_{1}}((\nabla
_{U_{k}}S)_{X_{1}}X_{2},U_{k})=dkv(H^{\prime })g_{M_{1}}(X_{1},X_{2})$,

6. $\sum\limits_{k=d_{2}+1}^{d_{1}}g_{M_{1}}(S_{X_{1}}U_{k},S_{X_{2}}U_{k})=%
\frac{\sigma ^{4}}{4}|\nabla _{\nu }\frac{1}{\sigma ^{2}}%
|^{2}g_{M_{1}}(X_{1},X_{2})$,

here $\{U_{k}\}_{d_{2}+1\leq k\leq d_{1}}$ is orthonormal bases of $%
Ker\vartheta _{\ast }$ and $\{X_{l}\}_{1\leq l\leq d_{2}}$ is orthonormal
bases of $(Ker\vartheta _{\ast })^{\bot },$ for $U_{1},U_{2}\in \Gamma
(Ker\vartheta _{\ast })~$and$~X_{1},X_{2}\in \Gamma (Ker\vartheta _{\ast
})^{\bot }$.
\end{lemma}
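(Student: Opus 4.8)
The plan is to derive all six identities from one structural fact: once $(Ker\vartheta _{\ast })^{\bot }$ is integrable, the horizontal distribution is totally umbilical. Applying Proposition \ref{prp1} with $\nu \lbrack X_{1},X_{2}]=0$ gives $S_{X_{1}}X_{2}=-\tfrac{\sigma ^{2}}{2}\,g_{M_{1}}(X_{1},X_{2})\,(\nabla _{\nu }\tfrac{1}{\sigma ^{2}})$, and by Proposition \ref{prop2.1} this equals $g_{M_{1}}(X_{1},X_{2})H^{\prime }$ with $H^{\prime }=-\tfrac{\sigma ^{2}}{2}\nabla _{\nu }\tfrac{1}{\sigma ^{2}}$, a \emph{vertical} vector field. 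Since each $S_{X}$ is skew-symmetric on $(\Gamma (TM_{1}),g_{M_{1}})$ and $S$ is horizontal ($S_{U}=0$ for vertical $U$), one gets the dual relation $S_{X_{1}}U=-g_{M_{1}}(H^{\prime },U)X_{1}$ for every vertical $U$. These two formulas, together with $|H^{\prime }|^{2}=\tfrac{\sigma ^{4}}{4}|\nabla _{\nu }\tfrac{1}{\sigma ^{2}}|^{2}$ and the Leibniz rule $(\nabla _{E}S)_{F}G=\nabla _{E}(S_{F}G)-S_{\nabla _{E}F}G-S_{F}(\nabla _{E}G)$, are the only ingredients I would use.

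For the algebraic identities (1) and (6) I would simply substitute $S_{X_{l}}U_{i}=-g_{M_{1}}(H^{\prime },U_{i})X_{l}$ and $S_{X_{1}}U_{k}=-g_{M_{1}}(H^{\prime },U_{k})X_{1}$, pull out the scalar factors, and use orthonormality of $\{X_{l}\}_{l\leq d_{2}}$ and $\{U_{k}\}_{d_{2}<k\leq d_{1}}$ together with the verticality of $H^{\prime }$: in (1) the sum over the horizontal frame produces the stated dimensional constant, while in (6) the identity $\sum _{k}g_{M_{1}}(H^{\prime },U_{k})^{2}=|H^{\prime }|^{2}$ collapses the sum and the formula for $H^{\prime }$ turns the right-hand side into $\tfrac{\sigma ^{4}}{4}|\nabla _{\nu }\tfrac{1}{\sigma ^{2}}|^{2}\,g_{M_{1}}(X_{1},X_{2})$.

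For (2)--(5) I would expand $(\nabla _{E}S)_{X_{l}}X_{l}$ by the Leibniz rule and insert $S_{X_{l}}X_{l}=H^{\prime }$, which holds as an equality of vector fields for an orthonormal horizontal frame, so $\sum _{l}\nabla _{E}(S_{X_{l}}X_{l})=d_{2}\,\nabla _{E}H^{\prime }$ in (2)--(3). In the remaining terms I would split $\nabla _{E}X_{l}$ into its $\mathcal{H}$- and $\nu $-parts: the $\mathcal{H}$-parts drop out because $g_{M_{1}}(\nabla _{E}X_{l},X_{l})=\tfrac{1}{2}E\,g_{M_{1}}(X_{l},X_{l})=0$, and the $\nu $-parts, after applying $S_{X_{l}}U=-g_{M_{1}}(H^{\prime },U)X_{l}$, yield purely horizontal vectors, which are annihilated by the vertical test vectors $U_{1}$ or $U_{2}$ appearing in (2)--(4); pairing the surviving $\nabla _{E}H^{\prime }$-term then gives exactly $d_{2}\,g_{M_{1}}(\nabla _{E}H^{\prime },U)$ (for (2) with $E=U_{1}$, (3) with $E=X_{1}$, and (4) with $E=X_{l}$ traced, using $\sum _{l}g_{M_{1}}(X_{1},X_{l})\nabla _{X_{l}}H^{\prime }=\nabla _{X_{1}}H^{\prime }$). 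For (5) the same expansion of $(\nabla _{U_{k}}S)_{X_{1}}X_{2}$ produces from $\nabla _{U_{k}}(g_{M_{1}}(X_{1},X_{2})H^{\prime })$ a term $H^{\prime }(g_{M_{1}}(X_{1},X_{2}))$ which must be seen to cancel against $g_{M_{1}}(S_{\nabla _{U_{k}}X_{1}}X_{2},U_{k})$ and $g_{M_{1}}(S_{X_{1}}\nabla _{U_{k}}X_{2},U_{k})$ (the latter rewritten via skew-symmetry), using $g_{M_{1}}(\nabla _{H^{\prime }}X_{1},X_{2})+g_{M_{1}}(\nabla _{H^{\prime }}X_{2},X_{1})=H^{\prime }(g_{M_{1}}(X_{1},X_{2}))$; what survives is $g_{M_{1}}(X_{1},X_{2})\sum _{k}g_{M_{1}}(\nabla _{U_{k}}H^{\prime },U_{k})$, i.e. $dkv(H^{\prime })\,g_{M_{1}}(X_{1},X_{2})$, where $dkv(H^{\prime })$ is the divergence of $H^{\prime }$ along the fibres. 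Lemma \ref{lem2.2} can be used, where convenient, to symmetrize derivatives of $H^{\prime }=-\tfrac{\sigma ^{2}}{2}\nabla _{\nu }\tfrac{1}{\sigma ^{2}}$.

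The part I expect to be the real work is the bookkeeping in (2)--(5): organizing the many terms produced by the Leibniz rule and the $\mathcal{H}/\nu $-decomposition so that each is seen to be either zero (by orthonormality of the frame or by horizontal--vertical orthogonality against $U_{1},U_{2}$) or a piece of $\nabla _{E}H^{\prime }$, and in particular verifying the cancellation that isolates $dkv(H^{\prime })$ in (5). Once the umbilicity relation and its skew-symmetric dual are in hand, every remaining step is a direct substitution.
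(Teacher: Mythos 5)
This lemma is imported verbatim from \cite{Meena_2023}; the present paper gives no proof of it, so there is no internal argument to compare yours with, and your proposal has to be judged on its own. On those terms your strategy is the natural one and essentially complete: integrability of $(Ker\vartheta _{\ast })^{\bot }$ together with Proposition \ref{prp1} gives $S_{X_{1}}X_{2}=-\frac{\sigma ^{2}}{2}g_{M_{1}}(X_{1},X_{2})\nabla _{\nu }\frac{1}{\sigma ^{2}}=g_{M_{1}}(X_{1},X_{2})H^{\prime }$ with the vertical field $H^{\prime }=-\frac{\sigma ^{2}}{2}\nabla _{\nu }\frac{1}{\sigma ^{2}}$ (which is also the correct reading of the garbled formula (\ref{eqn2.9})), skew-symmetry of $S_{X}$ then gives $S_{X}U=-g_{M_{1}}(H^{\prime },U)X$, and items (2)--(6) follow from the Leibniz rule for $\nabla S$ plus metric compatibility exactly as you describe: the cancellations you invoke ($g_{M_{1}}(\nabla _{E}X_{l},X_{l})=0$ for the diagonal terms, the metric-compatibility identity absorbing the derivative of the coefficient $g_{M_{1}}(X_{1},X_{l})$ in (4), and in (5) the identity $g_{M_{1}}(\nabla _{H^{\prime }}X_{1},X_{2})+g_{M_{1}}(\nabla _{H^{\prime }}X_{2},X_{1})=H^{\prime }(g_{M_{1}}(X_{1},X_{2}))$ obtained after summing the frame coefficients of $H^{\prime }$) do close the computation, with (4) read with the implicit sum over $l$, as you do, and $dkv$ read as the divergence along the fibres.

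The one concrete discrepancy is item (1): your own substitution gives $\sum_{l}g_{M_{1}}(S_{X_{l}}U_{1},S_{X_{l}}U_{2})=d_{2}\,g_{M_{1}}(H^{\prime },U_{1})g_{M_{1}}(H^{\prime },U_{2})=d_{2}\frac{\sigma ^{4}}{4}g_{M_{1}}(\nabla _{\nu }\frac{1}{\sigma ^{2}},U_{1})g_{M_{1}}(\nabla _{\nu }\frac{1}{\sigma ^{2}},U_{2})$, i.e.\ the factor $d_{2}$ and not the stated $d_{2}^{2}$, so your remark that the frame sum ``produces the stated dimensional constant'' is not literally true. A quick check on the warped product $g=\lambda (t)^{2}(dx^{2}+dy^{2})+dt^{2}$ submersing onto the Euclidean plane (here $\frac{1}{\sigma ^{2}}=\lambda ^{2}$, $H^{\prime }=-\frac{\lambda ^{\prime }}{\lambda }\partial _{t}$, $d_{2}=2$) confirms that the correct constant is $d_{2}$, so the $d_{2}^{2}$ in the quoted statement is a typo rather than a gap in your argument; and the discrepancy is harmless for the way the lemma is used here, since for a CCS $\sigma $ is constant along the fibres by Theorem \ref{Clairaut_condition}, so $\nabla _{\nu }\frac{1}{\sigma ^{2}}=0$ and the term in (\ref{eqn3.6}) vanishes with either constant.
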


\begin{proposition}
Let $\vartheta :(M_{1}^{d_{1}},g_{M_{1}})\rightarrow
(M_{2}^{d_{2}},g_{M_{2}})$ be a HCS with dilation $\sigma $. Then%
\begin{eqnarray}
&&Ric(U_{1},U_{2})  \label{Ric(U,V)} \\
&=&Ric^{\nu }(U_{1},U_{2})-(d_{1}-d_{2})g_{M_{1}}(T_{U_{1}}U_{2},H)  \notag
\\
&&+\sum\limits_{l=1}^{d_{2}}g_{M_{1}}(S_{X_{l}}U_{1},S_{X_{l}}U_{2})+\sum%
\limits_{l=1}^{d_{2}}g_{M_{1}}((\nabla _{U_{1}}S)_{X_{l}}X_{l},U_{2})  \notag
\\
&&-\sum\limits_{l=1}^{d_{2}}g_{M_{1}}((\nabla _{X_{l}}T)_{U_{1}}X_{l},U_{2})
\notag \\
&&-\frac{\sigma ^{4}}{2}d_{2}g_{M_{1}}(U_{1},\nabla _{\nu }\frac{1}{\sigma
^{2}})g_{M_{1}}(U_{2},\nabla _{\nu }\frac{1}{\sigma ^{2}}),  \notag
\end{eqnarray}%
\begin{eqnarray}
&&Ric(U_{1},X_{1})  \label{Ric(U,X)} \\
&=&(d_{1}-d_{2})g_{M_{1}}(\nabla _{U_{1}}H,X_{1})  \notag \\
&&-\sum\limits_{k=d_{2}+1}^{d_{1}}g_{M_{1}}((\nabla
_{U_{k}}T)_{U_{1}}U_{k},X_{1})+\sum\limits_{l=1}^{d_{2}}g_{M_{1}}((\nabla
_{X_{1}}S)_{X_{l}}X_{l},U_{1})  \notag \\
&&-\sum\limits_{l=1}^{d_{2}}g_{M_{1}}((\nabla
_{X_{l}}S)_{X_{1}}X_{l},U_{1})-\sum%
\limits_{l=1}^{d_{2}}g_{M_{1}}(T_{U_{1}}X_{l},\nu \lbrack X_{1},X_{l}]),
\notag
\end{eqnarray}%
\begin{eqnarray}
&&Ric(X_{1},X_{2})  \label{Ric(X,Y)} \\
&=&\sum\limits_{k=d_{2}+1}^{d_{1}}g_{M_{1}}((\nabla
_{U_{k}}S)_{X_{1}}X_{2},U_{k})  \notag \\
&&-\sum\limits_{k=d_{2}+1}^{d_{1}}g_{M_{1}}((\nabla
_{X_{1}}T)_{U_{k}}X_{2},U_{k})+\sum%
\limits_{k=d_{2}+1}^{d_{1}}g_{M_{1}}(S_{X_{1}}U_{k},S_{X_{2}}U_{k})  \notag
\\
&&-\sum\limits_{k=d_{2}+1}^{d_{1}}g_{M_{1}}(T_{U_{k}}X_{1},T_{U_{k}}X_{2})+%
\sigma ^{2}g_{M_{1}}(S_{X_{1}}X_{2},\nabla _{\nu }\frac{1}{\sigma ^{2}})
\notag \\
&&+\frac{1}{\sigma ^{2}}Ric^{M_{2}}(\tilde{X}_{1},\tilde{X}_{2})+\frac{3}{4}%
\sum\limits_{l=1}^{d_{2}}g_{M_{1}}(\nu \lbrack X_{1},X_{l}],\nu \lbrack
X_{l},X_{2}])  \notag \\
&&-\frac{(d_{2}-2)}{2}\sigma ^{2}g_{M_{1}}(\nabla _{X_{1}}\nabla \frac{1}{%
\sigma ^{2}},X_{2})  \notag \\
&&-\frac{\sigma ^{2}}{2}g_{M_{1}}(X_{1},X_{2})\left\{ \Delta ^{\mathcal{H}}%
\frac{1}{\sigma ^{2}}-d_{2}\left( H^{\prime }\frac{1}{\sigma ^{2}}\right)
\right\}  \notag \\
&&+\frac{d_{2}\sigma ^{4}}{4}g_{M_{1}}(X_{1},X_{2})|\nabla \frac{1}{\sigma
^{2}}|^{2}+\frac{\sigma ^{4}}{4}(d_{2}-2)(X_{1}\frac{1}{\sigma ^{2}})(X_{2}%
\frac{1}{\sigma ^{2}}),  \notag
\end{eqnarray}%
here $\{U_{k}\}_{d_{2}+1\leq k\leq d_{1}}$ is orthonormal bases of $%
Ker\vartheta _{\ast }$ and $\{X_{l}\}_{1\leq l\leq d_{2}}$ is orthonormal
bases of $(Ker\vartheta _{\ast })^{\bot },$ for $U_{1},U_{2}\in \Gamma
(Ker\vartheta _{\ast })~$and$~X_{1},X_{2}\in \Gamma (Ker\vartheta _{\ast
})^{\bot }$. Also, $X_{1}$ and $X_{2}$ are the horizontal lift of $\tilde{X}%
_{1}$ and $\tilde{X}_{2}$, respectively.
\end{proposition}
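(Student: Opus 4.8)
The plan is to compute the Ricci tensor of $M_{1}$ directly from its definition as a trace of the Riemann curvature operator, working in an adapted orthonormal frame. Fix $p_{1}\in M_{1}$ and choose an orthonormal basis of $T_{p_{1}}M_{1}$ of the form $\{X_{l}\}_{1\le l\le d_{2}}\cup\{U_{k}\}_{d_{2}+1\le k\le d_{1}}$ with $X_{l}$ horizontal (extended to basic vector fields) and $U_{k}$ vertical. Then for $E_{1},E_{2}\in\Gamma(TM_{1})$,
\[
Ric(E_{1},E_{2})=\sum_{l=1}^{d_{2}}g_{M_{1}}(R(X_{l},E_{1})E_{2},X_{l})+\sum_{k=d_{2}+1}^{d_{1}}g_{M_{1}}(R(U_{k},E_{1})E_{2},U_{k}),
\]
so everything reduces to O'Neill-type curvature identities for the three types of argument combinations. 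Those identities are produced by substituting the splittings \eqref{eqn2.4}--\eqref{eqn2.6} of $\nabla$ into $R(E_{1},E_{2})E_{3}=\nabla_{E_{1}}\nabla_{E_{2}}E_{3}-\nabla_{E_{2}}\nabla_{E_{1}}E_{3}-\nabla_{[E_{1},E_{2}]}E_{3}$ and separating vertical and horizontal components; in the conformal setting the horizontal parts pick up correction terms in the dilation $\sigma$, which one reads off from Lemma \ref{lem2.1} together with Propositions \ref{prop2.1} and \ref{prp1}.

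For the purely vertical case $Ric(U_{1},U_{2})$ I would recognize the sum over the vertical frame as the Gauss equation for the fibres, contributing $Ric^{\nu}(U_{1},U_{2})$, the mean-curvature term $-(d_{1}-d_{2})g_{M_{1}}(T_{U_{1}}U_{2},H)$ and a $T$-quadratic term, using $T_{U_{1}}U_{2}=T_{U_{2}}U_{1}$ and skew-symmetry of $T_{U_{k}}$. The sum over the horizontal frame is handled by the mixed curvature identity: since $S_{U}=0$ and $T_{X}=0$, only $S_{X_{l}}$ acting on vertical vectors and one $\nabla S$, one $\nabla T$ term survive, giving $\sum_{l}g_{M_{1}}(S_{X_{l}}U_{1},S_{X_{l}}U_{2})+\sum_{l}g_{M_{1}}((\nabla_{U_{1}}S)_{X_{l}}X_{l},U_{2})-\sum_{l}g_{M_{1}}((\nabla_{X_{l}}T)_{U_{1}}X_{l},U_{2})$, and the residual trace of the $S$-quadratic term is rewritten via Proposition \ref{prp1} to yield the last term $-\tfrac{\sigma^{4}}{2}d_{2}\,g_{M_{1}}(U_{1},\nabla_{\nu}\tfrac{1}{\sigma^{2}})g_{M_{1}}(U_{2},\nabla_{\nu}\tfrac{1}{\sigma^{2}})$. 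The mixed case $Ric(U_{1},X_{1})$ is analogous: the horizontal-frame sum contributes the two $\nabla S$ terms and the integrability term $-\sum_{l}g_{M_{1}}(T_{U_{1}}X_{l},\nu[X_{1},X_{l}])$, while the vertical-frame sum gives $(d_{1}-d_{2})g_{M_{1}}(\nabla_{U_{1}}H,X_{1})$ plus the divergence-of-$T$ term.

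The horizontal-horizontal case $Ric(X_{1},X_{2})$ is the main work and the principal obstacle. The vertical-frame sum again produces $S$-quadratic, $T$-quadratic, one $\nabla S$ and one $\nabla T$ term, plus the curvature term $\sigma^{2}g_{M_{1}}(S_{X_{1}}X_{2},\nabla_{\nu}\tfrac{1}{\sigma^{2}})$ coming from the umbilicity of $\mathcal{H}$ (Proposition \ref{prop2.1}). For the horizontal-frame sum one must descend to the base: by Lemma \ref{lem2.1}, $\vartheta_{\ast}(\mathcal{H}\nabla_{X_{1}}X_{2})$ equals $\nabla^{M_{2}}_{\vartheta_{\ast}X_{1}}\vartheta_{\ast}X_{2}$ plus explicit first-order terms in $\tfrac{1}{\sigma^{2}}$, so one differentiates this relation once more, applies $\vartheta_{\ast}$, and collects the $\sigma$-contributions; this is the step that generates the Hessian term $-\tfrac{(d_{2}-2)}{2}\sigma^{2}g_{M_{1}}(\nabla_{X_{1}}\nabla\tfrac{1}{\sigma^{2}},X_{2})$, the term $-\tfrac{\sigma^{2}}{2}g_{M_{1}}(X_{1},X_{2})\{\Delta^{\mathcal{H}}\tfrac{1}{\sigma^{2}}-d_{2}(H^{\prime}\tfrac{1}{\sigma^{2}})\}$, the term $\tfrac{d_{2}\sigma^{4}}{4}g_{M_{1}}(X_{1},X_{2})|\nabla\tfrac{1}{\sigma^{2}}|^{2}$ and $\tfrac{\sigma^{4}}{4}(d_{2}-2)(X_{1}\tfrac{1}{\sigma^{2}})(X_{2}\tfrac{1}{\sigma^{2}})$, the $(d_{2}-2)$ factors arising because two of the $d_{2}$ horizontal directions of summation coincide with $X_{1}$ or $X_{2}$. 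Here Lemma \ref{lem2.2} is used to symmetrize the Hessian, the first Bianchi identity converts the triple bracket sum into $\tfrac{3}{4}\sum_{l}g_{M_{1}}(\nu[X_{1},X_{l}],\nu[X_{l},X_{2}])$, and dividing by the conformal factor $\sigma^{2}$ from \eqref{eqn2.1} turns the pulled-back base curvature into $\tfrac{1}{\sigma^{2}}Ric^{M_{2}}(\tilde{X}_{1},\tilde{X}_{2})$. Adding the vertical and horizontal contributions in each case, and invoking Lemma \ref{lem2.3} to compress the traced $S$-, $\nabla S$- and bracket terms where $(Ker\vartheta_{\ast})^{\bot}$ is integrable, yields the three stated identities \eqref{Ric(U,V)}, \eqref{Ric(U,X)}, \eqref{Ric(X,Y)}.
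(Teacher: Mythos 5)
You should first note that the paper itself contains no proof of this proposition: it sits in the preliminaries among results quoted from Meena and Yadav \cite{Meena_2023} (cf.\ Theorems \ref{thr1}--\ref{thr3} and Lemma \ref{lem2.3}), so there is no in-paper argument to measure yours against. Your strategy --- trace the horizontally conformal analogues of O'Neill's curvature equations over an adapted frame $\{X_l\}\cup\{U_k\}$, with the dilation corrections entering through Proposition \ref{prp1} and Lemma \ref{lem2.1} --- is indeed the standard route used in that literature, so the plan is the right one.

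As a proof, however, there is a genuine gap: the entire content of the proposition lies in the conformal curvature identities that you invoke but never derive or even state. All of the delicate coefficients in \eqref{Ric(X,Y)} --- the factor $\tfrac{3}{4}$ on the bracket trace, the $(d_2-2)$ factors, the combination $\Delta^{\mathcal{H}}\tfrac{1}{\sigma^{2}}-d_{2}\bigl(H^{\prime }\tfrac{1}{\sigma ^{2}}\bigr)$, the $\tfrac{d_2\sigma^4}{4}|\nabla\tfrac1{\sigma^2}|^2$ term, and the $-\tfrac{\sigma^4}{2}d_2$ coefficient in \eqref{Ric(U,V)} --- are exactly what must be produced by differentiating the relation in Lemma \ref{lem2.1} and pulling back via \eqref{eqn2.1}; asserting that they ``come out'' of that differentiation is a description of the missing computation, not a proof of it. Two further concrete defects: (i) you lean on Proposition \ref{prop2.1} (umbilicity of $\mathcal H$) and on Lemma \ref{lem2.3}, both of which assume $(Ker\vartheta_{\ast})^{\bot}$ integrable, while the proposition makes no such hypothesis; Proposition \ref{prp1} is the correct general substitute, and ``compressing'' the traced $S$- and $\nabla S$-terms via Lemma \ref{lem2.3} would actually alter the stated formulas, which deliberately keep $\sum_l g_{M_1}(S_{X_l}U_1,S_{X_l}U_2)$ and its companions uncompressed; (ii) in the vertical case you announce a $T$-quadratic contribution from the Gauss equation of the fibres, yet no such term appears in \eqref{Ric(U,V)} --- it cancels against the term $-\sum_l g_{M_1}(T_{U_1}X_l,T_{U_2}X_l)$ arising in the horizontal-frame trace (using skew-symmetry of $T_{U}$ and $T_{U_1}U_2=T_{U_2}U_1$), a cancellation your sketch neither performs nor mentions, so as written your bookkeeping does not close to the stated right-hand sides.
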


The Clairaut condition for a Riemannian submersion was originally defined by
Bishop in \cite{Bishop_1972}. Further, Clairaut condition was defined for a
Riemannian map in \cite{Sahin_2017a} and \cite{Yadav_Clairaut Riem_base}.
Now, we recall the definition CCS \cite{Meena_2022}. According to
definition, A conformal submersion $\vartheta :(M_{1},g_{M_{1}})\rightarrow
(M_{2},g_{M_{2}})$ between Riemannian manifolds with dilation $\sigma $ is
said to be CCS if there is a function $r:M_{1}\rightarrow \mathbb{R}^{+}$
such that for every geodesic $\zeta $ on $M_{1}$, the function $(r\circ
\zeta )\sin \omega (s)$ is constant along $\zeta $, where for all $s$, $%
\omega (s)$ is the angle between $\dot{\zeta}(s)$ and the horizontal space
at $\zeta (s)$.

\begin{theorem}
\label{Clairaut_condition} \cite{Meena_2022} Let $\vartheta
:(M_{1},g_{M_{1}})\rightarrow (M_{2},g_{M_{2}})$ be a conformal submersion
with connected fibers and dilation $\sigma $. Then $\vartheta $ is a CCS
with $r=e^{\beta }$ if and only if $\nabla \beta $ is horizontal, fibers of $%
\vartheta $ are totally umbilical with $H=-\nabla \beta $ i.e. $%
T_{U_{1}}U_{2}=-g_{M_{1}}(U_{1},U_{2})\nabla \beta $ and $\sigma $ is
constant along the fibers of $\vartheta $.
\end{theorem}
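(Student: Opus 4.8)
The plan is to run Bishop's classical argument for the Clairaut condition, adapted to the conformal setting, where the non-constant dilation produces one extra term that is controlled by Proposition~\ref{prp1}. Fix a non-constant geodesic $\zeta$ on $M_1$, put $c=|\dot\zeta|>0$, and split $\dot\zeta=\mathcal V+\mathcal W$ with $\mathcal V=\nu\dot\zeta$ and $\mathcal W=\mathcal H\dot\zeta$; then $g_{M_1}(\mathcal V,\mathcal V)=c^{2}\sin^{2}\omega(s)$. Since the angle $\omega$ lies in $[0,\pi/2]$ we have $\sin\omega\ge 0$, so $(r\circ\zeta)\sin\omega=c^{-1}\sqrt{E}$ with $E:=(r\circ\zeta)^{2}\,g_{M_1}(\mathcal V,\mathcal V)$; as $\mathcal V$ is a smooth field along $\zeta$, $E$ is smooth, and $(r\circ\zeta)\sin\omega$ is constant along $\zeta$ if and only if $E$ is. Working with $E$ rather than with $\ln\sin\omega$ sidesteps a division by $\sin\omega$ where a geodesic is momentarily horizontal, so the whole statement reduces to deciding when $E'\equiv 0$ along every geodesic.

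For the key identity: with $r=e^{\beta}$, \eqref{eqn2.17} gives $(r\circ\zeta)'=(r\circ\zeta)\,g_{M_1}(\nabla\beta,\dot\zeta)$, so metric compatibility of $\nabla$ and $\nabla_{\dot\zeta}\dot\zeta=0$ yield
\[
\frac{E'}{2(r\circ\zeta)^{2}}=g_{M_1}(\nabla\beta,\dot\zeta)\,|\mathcal V|^{2}+g_{M_1}(\nabla_{\dot\zeta}\mathcal V,\mathcal V).
\]
To rewrite the second summand I would use $g_{M_1}(\nabla_{\dot\zeta}\mathcal V,\mathcal V)=-g_{M_1}(\nabla_{\dot\zeta}\mathcal W,\mathcal V)$, split $\nabla_{\dot\zeta}\mathcal W=\nabla_{\mathcal V}\mathcal W+\nabla_{\mathcal W}\mathcal W$, read off the vertical parts $\nu\nabla_{\mathcal V}\mathcal W=T_{\mathcal V}\mathcal W$ and $\nu\nabla_{\mathcal W}\mathcal W=S_{\mathcal W}\mathcal W$ from \eqref{eqn2.2}--\eqref{eqn2.3}, and use skew-symmetry of $T_{\mathcal V}$; this gives $g_{M_1}(\nabla_{\dot\zeta}\mathcal V,\mathcal V)=g_{M_1}(\mathcal W,T_{\mathcal V}\mathcal V)-g_{M_1}(S_{\mathcal W}\mathcal W,\mathcal V)$. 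Finally, Proposition~\ref{prp1} with $X_1=X_2=\mathcal W$ yields $S_{\mathcal W}\mathcal W=-\tfrac{\sigma^{2}}{2}\,|\mathcal W|^{2}\,\nabla_{\nu}\tfrac1{\sigma^{2}}$, so
\[
\frac{E'}{2(r\circ\zeta)^{2}}=g_{M_1}(\nabla\beta,\dot\zeta)\,|\mathcal V|^{2}+g_{M_1}(\mathcal W,T_{\mathcal V}\mathcal V)+\frac{\sigma^{2}}{2}\,|\mathcal W|^{2}\,g_{M_1}\!\Big(\nabla_{\nu}\tfrac1{\sigma^{2}},\mathcal V\Big).
\]

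Both implications then follow from this identity. For the ``if'' direction, assuming $\nabla\beta$ is horizontal, $T_{U_1}U_2=-g_{M_1}(U_1,U_2)\nabla\beta$, and $\sigma$ constant along the fibres: horizontality of $\nabla\beta$ turns the first summand into $|\mathcal V|^{2}g_{M_1}(\nabla\beta,\mathcal W)$, the umbilicity relation turns the second into $-|\mathcal V|^{2}g_{M_1}(\mathcal W,\nabla\beta)$, cancelling it, and $\sigma$ constant on the fibres gives $\nabla_{\nu}\tfrac1{\sigma^{2}}=0$, so the third vanishes; hence $E'\equiv 0$ and $\vartheta$ is a CCS with $r=e^{\beta}$. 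For the ``only if'' direction, through every $p_1\in M_1$ and every $v=v_{\nu}+v_{\mathcal H}\in T_{p_1}M_1$ there is a geodesic with $\dot\zeta(0)=v$, so constancy of $E$ forces
\[
g_{M_1}(\nabla\beta,v)\,|v_{\nu}|^{2}+g_{M_1}(v_{\mathcal H},T_{v_{\nu}}v_{\nu})+\frac{\sigma^{2}}{2}\,|v_{\mathcal H}|^{2}\,g_{M_1}\!\Big(\nabla_{\nu}\tfrac1{\sigma^{2}},v_{\nu}\Big)=0
\]
for all such $v$. Taking $v$ vertical gives $g_{M_1}(\nabla\beta,v)|v|^{2}=0$, hence $\nabla\beta\perp Ker\vartheta_{\ast}$; with this known, replacing $v_{\mathcal H}$ by $-v_{\mathcal H}$ in the identity and adding the two instances isolates $\sigma^{2}|v_{\mathcal H}|^{2}g_{M_1}(\nabla_{\nu}\tfrac1{\sigma^{2}},v_{\nu})=0$, whence $\nabla_{\nu}\tfrac1{\sigma^{2}}=0$, i.e. $\sigma$ constant along the fibres; what then remains is $g_{M_1}(v_{\mathcal H},\,|v_{\nu}|^{2}\nabla\beta+T_{v_{\nu}}v_{\nu})=0$ for all horizontal $v_{\mathcal H}$, and since $|v_{\nu}|^{2}\nabla\beta+T_{v_{\nu}}v_{\nu}$ is horizontal it vanishes; polarizing the resulting $T_{v_{\nu}}v_{\nu}=-g_{M_1}(v_{\nu},v_{\nu})\nabla\beta$ and using $T_{U_1}U_2=T_{U_2}U_1$ gives $T_{U_1}U_2=-g_{M_1}(U_1,U_2)\nabla\beta$, i.e. by \eqref{eqn2.7} totally umbilical fibres with $H=-\nabla\beta$.

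The main obstacle is the displayed formula for $E'$ — specifically the step $g_{M_1}(\nabla_{\dot\zeta}\mathcal V,\mathcal V)=g_{M_1}(\mathcal W,T_{\mathcal V}\mathcal V)-g_{M_1}(S_{\mathcal W}\mathcal W,\mathcal V)$ combined with the substitution of Proposition~\ref{prp1}. This is exactly where the conformal case departs from Bishop's Riemannian-submersion theorem, in which $S_{\mathcal W}\mathcal W=\tfrac12\nu[\mathcal W,\mathcal W]=0$; for a genuine dilation the term $\tfrac{\sigma^{2}}{2}|\mathcal W|^{2}g_{M_1}(\nabla_{\nu}\tfrac1{\sigma^{2}},\mathcal V)$ survives, and it is precisely this term that produces the third condition ``$\sigma$ constant along the fibres''. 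A secondary subtlety is that the scalar identity in the ``only if'' step is homogeneous of degree three, not linear, in $v$, so the three conclusions have to be teased apart using the independent rescalings of $v_{\nu}$ and $v_{\mathcal H}$ and a polarization; connectedness of the fibres is used only to identify ``$\nabla\beta$ horizontal'' with ``$\beta$ constant on each fibre'' and plays no role in the curvature computations.
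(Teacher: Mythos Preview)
The paper does not give its own proof of this theorem: it is quoted from \cite{Meena_2022} and stated in Section~\ref{sec2} without argument, then used as a black box throughout Section~\ref{sec3}. There is therefore nothing in the paper to compare your proposal against.

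That said, your argument is a correct adaptation of Bishop's classical proof to the conformal setting and is essentially what one expects the cited reference to contain. The derivation of the master identity for $E'$ is sound: the passage from $g_{M_1}(\nabla_{\dot\zeta}\mathcal V,\mathcal V)$ to $g_{M_1}(\mathcal W,T_{\mathcal V}\mathcal V)-g_{M_1}(S_{\mathcal W}\mathcal W,\mathcal V)$ uses only the definitions \eqref{eqn2.2}--\eqref{eqn2.3} and the skew-symmetry of $T_{\mathcal V}$, and the substitution of Proposition~\ref{prp1} with $X_1=X_2=\mathcal W$ is legitimate because $S$ is tensorial and, for equal arguments, the bracket term drops out pointwise. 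Your extraction of the three conditions in the ``only if'' direction---first taking $v$ vertical to get horizontality of $\nabla\beta$, then flipping the sign of $v_{\mathcal H}$ to isolate the $\nabla_{\nu}\tfrac1{\sigma^2}$ term, then reading off the umbilical relation from what remains and polarizing---is clean and correct. The remark that connectedness of the fibres is only needed to pass from ``$\nabla\beta$ horizontal'' to ``$\beta$ constant on each fibre'' is also accurate.
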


\section{Clairaut conformal submersions whose total manifolds admit a Ricci
soliton}

\label{sec3}In this part, we conceive a CCS whose total space is a Ricci
soliton and study its geometry.

\begin{proposition}
\label{prop3.1} Let $\vartheta :(M_{1}^{d_{1}},g_{M_{1}})\rightarrow
(M_{2}^{d_{2}},g_{M_{2}})$ be a CCS with dilation $\sigma $. Then the Ricci
tensor
\begin{eqnarray}
&&Ric(U_{1},U_{2})  \label{eqn3.1} \\
&=&Ric^{\nu }(U_{1},U_{2})-(d_{1}-d_{2})g_{M_{1}}(U_{1},U_{2})|\nabla \beta
|^{2}-g_{M_{1}}(U_{1},U_{2})div(\nabla \beta ),  \notag
\end{eqnarray}%
\begin{eqnarray}
&&Ric(U_{1},X_{1})  \label{eqn3.2} \\
&=&(d_{1}-d_{2}+1)g_{M_{1}}(S_{X_{1}}U_{1},\nabla \beta
)-\sum\limits_{l=1}^{d_{2}}\sum\limits_{k=d_{2}+1}^{d_{1}}g_{M_{1}}(\nabla
_{X_{l}}S_{X_{1}}X_{l},U_{k}),  \notag
\end{eqnarray}%
\begin{eqnarray}
&&Ric(X_{1},X_{2})  \label{eqn3.3} \\
&=&div(S_{X_{1}}X_{2})-2\sum%
\limits_{k=d_{2}+1}^{d_{1}}g_{M_{1}}(S_{X_{1}}U_{k},S_{X_{2}}U_{k})  \notag
\\
&&-(d_{1}-d_{2})g_{M_{1}}(X_{2},\nabla _{X_{1}}\nabla \beta )-X_{1}(\beta
)X_{2}(\beta )(d_{1}-d_{2})  \notag \\
&&+\frac{1}{\sigma ^{2}}Ric^{M_{2}}(\tilde{X}_{1},\tilde{X}_{2})-\frac{%
(d_{2}-2)}{2}\sigma ^{2}g_{M_{1}}(\nabla _{X_{1}}\nabla \frac{1}{\sigma ^{2}}%
,X_{2})  \notag \\
&&-\frac{\sigma ^{2}}{2}g_{M_{1}}(X_{1},X_{2})\Delta ^{\mathcal{H}}\frac{1}{%
\sigma ^{2}}+\frac{d_{2}\sigma ^{4}}{4}g_{M_{1}}(X_{1},X_{2})|\nabla \frac{1%
}{\sigma ^{2}}|^{2}  \notag \\
&&+\frac{\sigma ^{4}}{4}(d_{2}-2)(X_{1}\frac{1}{\sigma ^{2}})(X_{2}\frac{1}{%
\sigma ^{2}}),  \notag
\end{eqnarray}%
here $\{U_{k}\}_{d_{2}+1\leq k\leq d_{1}}$ is orthonormal bases of $%
Ker\vartheta _{\ast }$ and $\{X_{l}\}_{1\leq l\leq d_{2}}$ is orthonormal
bases of $(Ker\vartheta _{\ast })^{\bot },$ for $U_{1},U_{2}\in \Gamma
(Ker\vartheta _{\ast })~$and$~X_{1},X_{2}\in \Gamma (Ker\vartheta _{\ast
})^{\bot }$. Also, $X_{1}$ and $X_{2}$ are the horizontal lift of $\tilde{X}%
_{1}$ and $\tilde{X}_{2}$, respectively.
\end{proposition}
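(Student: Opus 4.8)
The plan is to specialize the general Ricci-tensor formulas \eqref{Ric(U,V)}, \eqref{Ric(U,X)}, \eqref{Ric(X,Y)} established in the previous Proposition to the Clairaut situation, using Theorem \ref{Clairaut_condition}. That theorem tells us three things for a CCS with $r=e^{\beta}$: the fibers are totally umbilical with $T_{U_{1}}U_{2}=-g_{M_{1}}(U_{1},U_{2})\nabla\beta$ (so $H=-\nabla\beta$), the gradient $\nabla\beta$ is horizontal, and $\sigma$ is constant along the fibers. Each of the three identities will be obtained by substituting these facts into the corresponding general formula and simplifying the tensorial terms.

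For \eqref{eqn3.1}: start from \eqref{Ric(U,V)}. Since $\sigma$ is constant along the fibers, $\nabla_{\nu}\tfrac{1}{\sigma^{2}}$ vanishes as a vertical vector, so the last term in \eqref{Ric(U,V)} drops out and (via Proposition \ref{prp1} or Lemma \ref{lem2.3}(1,6)) the $S$-terms $\sum_{l}g_{M_{1}}(S_{X_{l}}U_{1},S_{X_{l}}U_{2})$ and the related $(\nabla S)$ contributions collapse. The term $-(d_{1}-d_{2})g_{M_{1}}(T_{U_{1}}U_{2},H)$ becomes $-(d_{1}-d_{2})g_{M_{1}}(U_{1},U_{2})|\nabla\beta|^{2}$ using $T_{U_{1}}U_{2}=-g_{M_{1}}(U_{1},U_{2})\nabla\beta$ and $H=-\nabla\beta$. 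The remaining $(\nabla_{X_{l}}T)_{U_{1}}X_{l}$ sum is rewritten: differentiating the umbilical relation $T_{U_{1}}X_{l}=-g_{M_{1}}(\nabla\beta,X_{l})U_{1}$ and tracing produces a divergence term $-g_{M_{1}}(U_{1},U_{2})\operatorname{div}(\nabla\beta)$. Collecting gives \eqref{eqn3.1}. For \eqref{eqn3.2}: in \eqref{Ric(U,X)}, rewrite $(d_{1}-d_{2})g_{M_{1}}(\nabla_{U_{1}}H,X_{1})$ using $H=-\nabla\beta$ and \eqref{eqn2.5} to get a term proportional to $g_{M_{1}}(S_{X_{1}}U_{1},\nabla\beta)$; handle the $(\nabla_{U_{k}}T)_{U_{1}}U_{k}$ trace the same way via the umbilical formula; the bracket $[X_{1},X_{l}]$ terms vanish when $(Ker\vartheta_{\ast})^{\bot}$ is treated appropriately, and the leftover $(\nabla_{X_{l}}S)_{X_{1}}X_{l}$ sum is repackaged as the double sum in \eqref{eqn3.2}. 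Finally for \eqref{eqn3.3}: in \eqref{Ric(X,Y)}, the $\nabla_{\nu}\tfrac{1}{\sigma^{2}}$ term disappears (fiber-constancy of $\sigma$), $\sum_{k}g_{M_{1}}((\nabla_{U_{k}}S)_{X_{1}}X_{2},U_{k})-\sum_{k}g_{M_{1}}(T_{U_{k}}X_{1},T_{U_{k}}X_{2})$ combines into $\operatorname{div}(S_{X_{1}}X_{2})-2\sum_{k}g_{M_{1}}(S_{X_{1}}U_{k},S_{X_{2}}U_{k})$ after using the umbilical form of $T$ and the definition \eqref{eqn2.19} of divergence; the $(\nabla_{X_{1}}T)_{U_{k}}X_{2}$ trace yields the $-(d_{1}-d_{2})g_{M_{1}}(X_{2},\nabla_{X_{1}}\nabla\beta)$ and $-X_{1}(\beta)X_{2}(\beta)(d_{1}-d_{2})$ terms; the $\nu[X_{1},X_{l}]$ bracket terms cancel; and the purely horizontal curvature, Hessian, and $|\nabla\tfrac{1}{\sigma^{2}}|^{2}$ terms carry over unchanged from \eqref{Ric(X,Y)}.

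The main obstacle I expect is the careful bookkeeping in the $(\nabla T)$ and $(\nabla S)$ trace terms: one must differentiate the totally-umbilical identities $T_{U_{1}}U_{2}=-g_{M_{1}}(U_{1},U_{2})\nabla\beta$ and $T_{U_{1}}X_{1}=-g_{M_{1}}(H,X_{1})U_{1}$ with respect to the covariant derivative, keep track of which components are vertical versus horizontal (using $\nabla\beta$ horizontal throughout), and recognize the resulting traces as $\operatorname{div}(\nabla\beta)$, $\operatorname{div}(S_{X_{1}}X_{2})$, or a Hessian of $\beta$ — while simultaneously discarding every term that contains $\nabla_{\nu}\tfrac{1}{\sigma^{2}}$. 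This is exactly the kind of computation that is routine in principle but error-prone, so the proof will consist of writing out each of the three substitutions explicitly, invoking Lemma \ref{lem2.3} to evaluate the surviving $S$-traces and Lemma \ref{lem2.2} to symmetrize the Hessian terms, and then matching the output line-by-line with \eqref{eqn3.1}–\eqref{eqn3.3}.
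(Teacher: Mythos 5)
Your overall strategy is the same as the paper's: substitute the Clairaut data from Theorem \ref{Clairaut_condition} ($H=-\nabla \beta $, totally umbilical fibers, $\sigma $ constant along the fibers, hence $\nabla _{\nu }\tfrac{1}{\sigma ^{2}}=0$) into the general formulas \eqref{Ric(U,V)}--\eqref{Ric(X,Y)} and simplify the surviving $S$- and $T$-traces via Lemma \ref{lem2.3} and Proposition \ref{prp1}. Your treatment of \eqref{eqn3.1} is in line with this. However, two of your concrete simplifications are wrong and, as written, would not reproduce the stated formulas. In \eqref{eqn3.2} you assert that the bracket term $\sum_{l}g_{M_{1}}(T_{U_{1}}X_{l},\nu \lbrack X_{1},X_{l}])$ vanishes ``when $(Ker\vartheta _{\ast })^{\bot }$ is treated appropriately''; no integrability of the horizontal distribution is assumed, and it does not vanish. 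By Proposition \ref{prp1} with $\nabla _{\nu }\tfrac{1}{\sigma ^{2}}=0$ one has $\nu \lbrack X_{1},X_{l}]=2S_{X_{1}}X_{l}$, and with $T_{U_{1}}X_{l}=g_{M_{1}}(\nabla \beta ,X_{l})U_{1}$ and the skew-symmetry of $S_{X_{1}}$ this sum equals $-2g_{M_{1}}(S_{X_{1}}U_{1},\nabla \beta )$. Its contribution of $+2g_{M_{1}}(S_{X_{1}}U_{1},\nabla \beta )$ to the Ricci tensor is precisely what produces the coefficient $d_{1}-d_{2}+1$; dropping it, as your sketch does, gives $d_{1}-d_{2}-1$ and a formula different from \eqref{eqn3.2}.

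The same issue recurs in \eqref{eqn3.3}: the term $\tfrac{3}{4}\sum_{l}g_{M_{1}}(\nu \lbrack X_{1},X_{l}],\nu \lbrack X_{l},X_{2}])$ does not simply cancel; using $\nu \lbrack X_{1},X_{l}]=2S_{X_{1}}X_{l}$ and skew-symmetry it equals $-3\sum_{k}g_{M_{1}}(S_{X_{1}}U_{k},S_{X_{2}}U_{k})$, which combined with the term $+\sum_{k}g_{M_{1}}(S_{X_{1}}U_{k},S_{X_{2}}U_{k})$ already present in \eqref{Ric(X,Y)} yields the $-2\sum_{k}g_{M_{1}}(S_{X_{1}}U_{k},S_{X_{2}}U_{k})$ of the statement; your proposal instead manufactures this term out of the $(\nabla _{U_{k}}S)$ and $T_{U_{k}}$ traces, which is not where it comes from. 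Relatedly, your bookkeeping attributes $-(d_{1}-d_{2})X_{1}(\beta )X_{2}(\beta )$ to the trace $\sum_{k}g_{M_{1}}((\nabla _{X_{1}}T)_{U_{k}}X_{2},U_{k})$, whereas that trace gives only $(d_{1}-d_{2})g_{M_{1}}(\nabla _{X_{1}}\nabla \beta ,X_{2})$; the product term arises from $-\sum_{k}g_{M_{1}}(T_{U_{k}}X_{1},T_{U_{k}}X_{2})=-(d_{1}-d_{2})X_{1}(\beta )X_{2}(\beta )$ via the umbilical form of $T$. So the route is the right one, but the handling of the $\nu \lbrack \cdot ,\cdot ]$ and $T$-trace terms must be corrected as above before the computation matches \eqref{eqn3.1}--\eqref{eqn3.3} line by line.
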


\begin{proof}
Since $\vartheta $ is CCS, utilizing Theorem \ref{Clairaut_condition}, we
obtain
\begin{equation}
g_{M_{1}}(T_{U_{1}}U_{2},H)=g_{M_{1}}(U_{1},U_{2})|\nabla \beta |^{2},
\label{eqn3.4}
\end{equation}%
and
\begin{equation}
\sum\limits_{l=1}^{d_{2}}g_{M_{1}}((\nabla
_{X_{l}}T)_{U_{1}}X_{l},U_{2})=g_{M_{1}}(U_{1},U_{2})div(\nabla \beta ).
\label{eqn3.5}
\end{equation}%
Also, by using Theorem \ref{Clairaut_condition} in Lemma \ref{lem2.3}, we
have
\begin{equation}
\sum\limits_{l=1}^{d_{2}}g_{M_{1}}(S_{X_{l}}U_{1},S_{X_{l}}U_{2})=0,
\label{eqn3.6}
\end{equation}%
and
\begin{equation}
\sum\limits_{l=1}^{d_{2}}g_{M_{1}}((\nabla _{U_{1}}S)_{X_{l}}X_{l},U_{2})=0.
\label{eqn3.7}
\end{equation}%
Then using (\ref{eqn3.4})-(\ref{eqn3.7}) in (\ref{Ric(U,V)}), we get (\ref%
{eqn3.1}).

Also, by using Theorem \ref{Clairaut_condition}, we obtain
\begin{equation}
g_{M_{1}}(\nabla _{U_{1}}H,X_{1})=-g_{M_{1}}(\nabla _{U_{1}}\nabla \beta
,X_{1})  \label{eqn3.8}
\end{equation}%
\begin{equation}
\sum\limits_{k=d_{2}+1}^{d_{1}}g_{M_{1}}((\nabla
_{U_{k}}T)_{U_{1}}U_{k},X_{1})=g_{M_{1}}(S_{X_{1}}U_{1},\nabla \beta )
\label{eqn3.9}
\end{equation}%
\begin{equation}
\sum\limits_{l=1}^{d_{2}}g_{M_{1}}((\nabla _{X_{1}}S)_{X_{l}}X_{l},U_{1})=0
\label{eqn3.10}
\end{equation}%
\begin{eqnarray}
&&\sum\limits_{l=1}^{d_{2}}g_{M_{1}}((\nabla _{X_{l}}S)_{X_{1}}X_{l},U_{1})
\label{eqn3.11} \\
&=&\sum\limits_{l=1}^{d_{2}}\sum\limits_{k=d_{2}+1}^{d_{1}}g_{M_{1}}((\nabla
_{X_{l}}S)_{X_{1}}X_{l},U_{k})g_{M_{1}}(U_{k},U_{k})  \notag
\end{eqnarray}%
\begin{equation}
\sum\limits_{l=1}^{d_{2}}g_{M_{1}}(T_{U_{1}}X_{l},\nu \lbrack
X_{1},X_{l}])=-2g_{M_{1}}(S_{X_{1}}U_{1},\nabla \beta ).  \label{eqn3.12}
\end{equation}%
Then using (\ref{eqn3.8})-(\ref{eqn3.12}) in (\ref{Ric(U,X)}), we get (\ref%
{eqn3.2}). Finally,
\begin{equation}
\sum\limits_{k=d_{2}+1}^{d_{1}}g_{M_{1}}((\nabla
_{U_{k}}S)_{X_{1}}X_{2},U_{k})=div(S_{X_{1}}X_{2})  \label{eqn3.13}
\end{equation}%
\begin{equation}
\sum\limits_{k=d_{2}+1}^{d_{1}}g_{M_{1}}(S_{X_{1}}U_{k},S_{X_{2}}U_{k})=0
\label{eqn3.14}
\end{equation}%
\begin{equation}
\sum\limits_{k=d_{2}+1}^{d_{1}}g_{M_{1}}((\nabla
_{X_{1}}T)_{U_{k}}X_{2},U_{k})=(d_{1}-d_{2})g_{M_{1}}(X_{2},\nabla
_{X_{1}}\nabla \beta )  \label{eqn3.15}
\end{equation}%
\begin{equation}
\sum%
\limits_{k=d_{2}+1}^{d_{1}}g_{M_{1}}(T_{U_{k}}X_{1},T_{U_{k}}X_{2})=g_{M_{1}}\left( \nabla _{X_{1}}%
\mathcal{H}\nabla \frac{1}{\sigma ^{2}},X_{2}\right) .  \label{eqn3.16}
\end{equation}%
Then using (\ref{eqn3.13})-(\ref{eqn3.16}) in (\ref{Ric(X,Y)}), we get (\ref%
{eqn3.3}).
\end{proof}

\begin{theorem}
Let $(M_{1},g_{M_{1}},\xi ,\mu )$ be a Ricci soliton with the PVF $\xi \in
\Gamma (TM_{1})$ and $\vartheta :(M_{1}^{d_{1}},g_{M_{1}})\rightarrow
(M_{2}^{d_{2}},g_{M_{2}})$ be a CCS between Riemannian manifolds. Then
following options are valid:

(i) any fiber of $\vartheta $ is an almost Ricci soliton if $\xi =W\in
\Gamma (Ker\vartheta _{\ast })$,

(ii) any fiber of $\vartheta $ is an Einstein if $\xi =X_{1}\in \Gamma
(Ker\vartheta _{\ast })^{\bot }$.
\end{theorem}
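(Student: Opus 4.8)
The plan is to restrict the Ricci soliton equation $(\ref{eqn1.2})$ to pairs of vertical vector fields $U_{1},U_{2}\in \Gamma (Ker\vartheta _{\ast })$ and to rewrite the ambient Ricci tensor via the expression $(\ref{eqn3.1})$ of Proposition \ref{prop3.1}. Evaluating $(\ref{eqn1.2})$ on $(U_{1},U_{2})$ gives
\[
\tfrac{1}{2}(L_{\xi }g_{M_{1}})(U_{1},U_{2})+Ric(U_{1},U_{2})+\mu \,g_{M_{1}}(U_{1},U_{2})=0,
\]
so in both cases it remains only to compute $(L_{\xi }g_{M_{1}})(U_{1},U_{2})$, insert $(\ref{eqn3.1})$, and collect the multiples of $g_{M_{1}}(U_{1},U_{2})$.

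For (i), with $\xi =W\in \Gamma (Ker\vartheta _{\ast })$, I write $(L_{W}g_{M_{1}})(U_{1},U_{2})=g_{M_{1}}(\nabla _{U_{1}}W,U_{2})+g_{M_{1}}(\nabla _{U_{2}}W,U_{1})$ and split $\nabla _{U_{1}}W=T_{U_{1}}W+\nu \nabla _{U_{1}}W$ by $(\ref{eqn2.4})$. Since $W$ is vertical, $T_{U_{1}}W$ is horizontal, hence $g_{M_{1}}(T_{U_{1}}W,U_{2})=0$; thus $(L_{W}g_{M_{1}})(U_{1},U_{2})=g_{M_{1}}(\nu \nabla _{U_{1}}W,U_{2})+g_{M_{1}}(\nu \nabla _{U_{2}}W,U_{1})$, which is exactly the Lie derivative $(L_{W}\hat{g})(U_{1},U_{2})$ of the induced fiber metric $\hat{g}$, because $\nu \nabla $ restricted to a fiber is its Levi-Civita connection. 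Substituting this and $(\ref{eqn3.1})$ into the soliton equation yields
\[
\tfrac{1}{2}(L_{W}\hat{g})(U_{1},U_{2})+Ric^{\nu }(U_{1},U_{2})+\lambda \,g_{M_{1}}(U_{1},U_{2})=0,
\]
with $\lambda =\mu -(d_{1}-d_{2})|\nabla \beta |^{2}-div(\nabla \beta )$. As $\lambda $ is a smooth function on $M_{1}$ (not constant in general), this is precisely the almost Ricci soliton equation for the fiber with potential field $W$ and soliton function $\lambda $, which gives (i).

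For (ii), with $\xi =X_{1}\in \Gamma (Ker\vartheta _{\ast })^{\bot }$, I differentiate the identity $g_{M_{1}}(X_{1},U_{2})=0$ in the direction $U_{1}$ to get $g_{M_{1}}(\nabla _{U_{1}}X_{1},U_{2})=-g_{M_{1}}(X_{1},\nabla _{U_{1}}U_{2})$; by $(\ref{eqn2.4})$ only the $T$-part contributes, and by Theorem \ref{Clairaut_condition}, $T_{U_{1}}U_{2}=-g_{M_{1}}(U_{1},U_{2})\nabla \beta $, so $g_{M_{1}}(\nabla _{U_{1}}X_{1},U_{2})=g_{M_{1}}(U_{1},U_{2})X_{1}(\beta )$. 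Hence $(L_{X_{1}}g_{M_{1}})(U_{1},U_{2})=2\,g_{M_{1}}(U_{1},U_{2})X_{1}(\beta )$; note that, in contrast to (i), no intrinsic derivative of $\xi $ along the fiber survives. Inserting this and $(\ref{eqn3.1})$ into the soliton equation and solving for $Ric^{\nu }$ gives
\[
Ric^{\nu }(U_{1},U_{2})=\big[(d_{1}-d_{2})|\nabla \beta |^{2}+div(\nabla \beta )-X_{1}(\beta )-\mu \big]\,g_{M_{1}}(U_{1},U_{2}),
\]
so every fiber is Einstein, which is (ii).

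The main obstacle is the correct identification of $L_{\xi }g_{M_{1}}$, restricted to vertical arguments, with an intrinsic fiber quantity: in (i) one must be sure the $T_{U_{1}}W$ contribution drops (because $T$ sends a pair of vertical fields to a horizontal field), so that what remains really is the fiber Lie derivative; in (ii) one must convert $g_{M_{1}}(\nabla _{U_{1}}X_{1},U_{2})$ into a term in $\nabla \beta $ using the totally umbilical/Clairaut property of the fibers from Theorem \ref{Clairaut_condition}. Once these two computations are secured, the rest is substitution into $(\ref{eqn3.1})$ and $(\ref{eqn1.2})$ and grouping the $g_{M_{1}}(U_{1},U_{2})$ coefficients, and the distinction between \emph{almost Ricci soliton} in (i) and \emph{Einstein} in (ii) is exactly whether the fiber Lie-derivative term is present (as in (i)) or absent (as in (ii)).
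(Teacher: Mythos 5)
Your proposal is correct and follows essentially the same route as the paper: restrict the soliton equation (\ref{eqn1.2}) to vertical fields, substitute (\ref{eqn3.1}), and in case (ii) use $T_{U_{1}}U_{2}=-g_{M_{1}}(U_{1},U_{2})\nabla \beta$ from Theorem \ref{Clairaut_condition} to turn the Lie-derivative term into $g_{M_{1}}(U_{1},U_{2})X_{1}(\beta )$. Your extra justification that the $T_{U_{1}}W$ contribution drops in case (i), so the restricted Lie derivative is the intrinsic fiber one, is a detail the paper leaves implicit, and your final formulas agree with the paper's up to its notational slip writing $div(H)$ for $-div(\nabla \beta )$.
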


\begin{proof}
Since $(M_{1},g_{M_{1}},\xi ,\mu )$ is a Ricci soliton, then using (\ref%
{eqn1.2}), we obtain
\begin{equation}
\frac{1}{2}(L_{\xi }g_{M_{1}})(U_{1},U_{2})+Ric(U_{1},U_{2})+\mu
g_{M_{1}}(U_{1},U_{2})=0,~  \label{eqn3.4a}
\end{equation}%
for$~U_{1},U_{2}\in \Gamma (Ker\vartheta _{\ast }).$ Using (\ref{eqn3.1}) in
(\ref{eqn3.4a}), we have
\begin{eqnarray}
&&\frac{1}{2}(L_{\xi }g_{M_{1}})(U_{1},U_{2})+Ric^{\nu }(U_{1},U_{2})
\label{eqn3.5a} \\
&&+g_{M_{1}}(U_{1},U_{2})(\mu -(d_{1}-d_{2})|\nabla \beta |^{2}-div(\nabla
\beta ))  \notag \\
&=&0.  \notag
\end{eqnarray}%
If $\xi =W\in \Gamma (Ker\vartheta _{\ast })$, then from (\ref{eqn3.5a}), we
acquire
\begin{equation*}
\frac{1}{2}(L_{W}g_{M_{1}})(U_{1},U_{2})+Ric^{\nu }(U_{1},U_{2})+\mu
^{\prime }g_{M_{1}}(U_{1},U_{2})=0,
\end{equation*}%
where $\mu ^{\prime 2}-div(\nabla \beta )$ is a smooth function on $M_{1}$.
This implies $(i)$. Now, if $\xi =X_{1}\in \Gamma (Ker\vartheta _{\ast
})^{\bot },$ then (\ref{eqn3.5a}) implies
\begin{eqnarray*}
&&\frac{1}{2}\{g_{M_{1}}(\nabla _{U_{1}}X_{1},U_{2})+g_{M_{1}}(\nabla
_{U_{2}}X_{1},U_{1})\} \\
&&+Ric^{\nu }(U_{1},U_{2})+g_{M_{1}}(U_{1},U_{2})(\mu -(d_{1}-d_{2})|\nabla
\beta |^{2}-div(\nabla \beta )) \\
&=&0,
\end{eqnarray*}%
which implies
\begin{eqnarray*}
&&-g_{M_{1}}(\mathcal{H}\nabla _{U_{1}}U_{2},X_{1})+Ric^{\nu }(U_{1},U_{2})
\\
&&+g_{M_{1}}(U_{1},U_{2})(\mu -(d_{1}-d_{2})|\nabla \beta |^{2}-div(\nabla
\beta )) \\
&=&0.
\end{eqnarray*}%
Using (\ref{eqn2.4}) and Theorem \ref{Clairaut_condition} in above equation,
we get
\begin{equation*}
Ric^{\nu }(U_{1},U_{2})+g_{M_{1}}(U_{1},U_{2})(\mu -(d_{1}-d_{2})|\nabla
\beta |^{2}-div(H)+X_{1}(\beta ))=0.
\end{equation*}%
which implies $(ii)$.
\end{proof}

\begin{theorem}
Let $(M_{1},g_{M_{1}},\xi ,\mu )$ be a Ricci soliton with the PVF $\xi \in
\Gamma (TM_{1})$ and $\vartheta :(M_{1}^{d_{1}},g_{M_{1}})\rightarrow
(M_{2}^{d_{2}},g_{M_{2}})$ be a totally geodesic CCS between Riemannian
manifolds. Then the following options are valid:

(i) $M_{2}$ is a Ricci soliton if $\xi =Z\in \Gamma (Ker\vartheta _{\ast
})^{\bot }$,

(ii) $M_{2}$ is an Einstein if $\xi =W\in \Gamma (Ker\vartheta _{\ast })$.
\end{theorem}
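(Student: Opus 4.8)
The plan is to push the soliton equation down to the horizontal bundle, where the Ricci tensor of $M_1$ coincides with that of $M_2$ up to the dilation factor, and then read off the conclusions. First I would record what total geodesy buys: by Theorem \ref{thr3}, since $\vartheta$ is a totally geodesic CCS, the fibres of $\vartheta$ and the distribution $(Ker\vartheta _{\ast })^{\bot }$ are both totally geodesic and $\vartheta $ is homothetic; combined with the Clairaut characterization of Theorem \ref{Clairaut_condition} this forces $\sigma $ to be (globally) constant and $\nabla \beta \equiv 0$, hence $H=-\nabla \beta =0$ and $T\equiv 0$. Total geodesy of $(Ker\vartheta _{\ast })^{\bot }$ gives $S_{X_1}X_2=0$ for horizontal $X_1,X_2$; then the skew-symmetry of $S_{X_1}$, which interchanges the two distributions, gives $g_{M_1}(S_{X_1}U,Y)=-g_{M_1}(U,S_{X_1}Y)=0$ for all horizontal $Y$, so $S_{X_1}U=0$ as well, i.e.\ $S\equiv 0$.

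Next I would substitute $T=0$, $S=0$, $\sigma$ constant and $\nabla \beta =0$ into the Ricci identity (\ref{eqn3.3}) of Proposition \ref{prop3.1}. Every summand there carries a factor built from $S$, $T$, $\nabla\beta$ or a derivative of $\sigma$, so only the term $\frac{1}{\sigma ^{2}}Ric^{M_{2}}(\tilde{X}_{1},\tilde{X}_{2})$ survives:
\[
Ric(X_{1},X_{2})=\frac{1}{\sigma ^{2}}\,Ric^{M_{2}}(\tilde{X}_{1},\tilde{X}_{2}),
\]
for all horizontal $X_{1},X_{2}$. I would keep alongside it the conformality relation $g_{M_{1}}(X_{1},X_{2})=\frac{1}{\sigma ^{2}}g_{M_{2}}(\tilde{X}_{1},\tilde{X}_{2})$ from (\ref{eqn2.1}), and then evaluate the soliton equation (\ref{eqn1.2}) on horizontal $X_{1},X_{2}$ chosen as horizontal lifts of arbitrary fields $\tilde{X}_{1},\tilde{X}_{2}$ on $M_{2}$ (legitimate since $L_{\xi }g_{M_{1}}$, $Ric$ and $g_{M_{1}}$ are tensorial and $\vartheta _{\ast }$ maps $(Ker\vartheta _{\ast })^{\bot }$ onto $TM_{2}$).

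For (i), take $\xi =Z$ to be the horizontal lift of a field $\tilde{Z}$ on $M_{2}$. Since $(Ker\vartheta _{\ast })^{\bot }$ is totally geodesic, $\nabla _{X_{i}}Z$ is horizontal, and Lemma \ref{lem2.1} with $\sigma$ constant identifies it with the horizontal lift of $\nabla _{\tilde{X}_{i}}^{M_{2}}\tilde{Z}$; together with the conformality relation and $(L_{Z}g_{M_{1}})(X_{1},X_{2})=g_{M_{1}}(\nabla _{X_{1}}Z,X_{2})+g_{M_{1}}(\nabla _{X_{2}}Z,X_{1})$ this gives $(L_{Z}g_{M_{1}})(X_{1},X_{2})=\frac{1}{\sigma ^{2}}(L_{\tilde{Z}}g_{M_{2}})(\tilde{X}_{1},\tilde{X}_{2})$. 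Substituting this and the Ricci identity into (\ref{eqn1.2}) and clearing the common factor $\frac{1}{\sigma ^{2}}$ leaves $\frac{1}{2}(L_{\tilde{Z}}g_{M_{2}})+Ric^{M_{2}}+\mu \,g_{M_{2}}=0$, so $(M_{2},g_{M_{2}},\tilde{Z},\mu )$ is a Ricci soliton. For (ii), take $\xi =W$ vertical; since $S=0$, $\nabla _{X_{i}}W=\nu \nabla _{X_{i}}W$ is vertical, so $(L_{W}g_{M_{1}})(X_{1},X_{2})=0$ and (\ref{eqn1.2}) collapses to $Ric(X_{1},X_{2})=-\mu \,g_{M_{1}}(X_{1},X_{2})$; the Ricci identity and the conformality relation then yield $Ric^{M_{2}}(\tilde{X}_{1},\tilde{X}_{2})=-\mu \,g_{M_{2}}(\tilde{X}_{1},\tilde{X}_{2})$, i.e.\ $M_{2}$ is Einstein.

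I expect the one genuinely delicate step to be the transport of the Lie derivative in (i): matching $L_{\xi }g_{M_{1}}$ on the horizontal bundle with the intrinsic $L_{\tilde{\xi}}g_{M_{2}}$ on $M_{2}$. That is where all the hypotheses enter at once --- total geodesy of $(Ker\vartheta _{\ast })^{\bot }$ to keep $\nabla _{X}Z$ horizontal and to annihilate $S$, constancy of $\sigma $ to trivialize Lemma \ref{lem2.1}, and (for full rigour) projectability of the potential field so that $\vartheta _{\ast }Z$ actually defines a vector field on $M_{2}$. Collapsing the many-term expression (\ref{eqn3.3}) and handling case (ii) are then routine.
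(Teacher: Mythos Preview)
Your proposal is correct and follows essentially the same route as the paper: evaluate the soliton equation on horizontal vectors, use total geodesy (via Theorems \ref{thr1}--\ref{thr3}) to collapse (\ref{eqn3.3}) to $Ric(X_{1},X_{2})=\frac{1}{\sigma ^{2}}Ric^{M_{2}}(\tilde{X}_{1},\tilde{X}_{2})$, and then handle the Lie-derivative term according to whether $\xi$ is horizontal or vertical. The only cosmetic difference is that you front-load the consequences of total geodesy ($\sigma$ constant, $T=S=0$, $\nabla\beta=0$) and invoke Lemma \ref{lem2.1} to transport $\nabla_{X}Z$, whereas the paper reaches the same identification through the vanishing of $(\nabla\vartheta_{\ast})$ in (\ref{eqn2.10}); both arguments tacitly assume $Z$ is projectable, as you rightly flag.
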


\begin{proof}
Since $(M_{1},g_{M_{1}},\xi ,\mu )$ is a Ricci soliton then using (\ref%
{eqn1.2}), we obtain%
\begin{equation*}
\frac{1}{2}(L_{\xi }g_{M_{1}})(X_{1},X_{2})+Ric(X_{1},X_{2})+\mu
g_{M_{1}}(X_{1},X_{2})=0,~
\end{equation*}

which can be written as%
\begin{eqnarray}
&&\frac{1}{2}\left\{ g_{M_{1}}(\nabla _{X_{1}}\xi ,X_{2})+g_{M_{1}}(\nabla
_{X_{2}}\xi ,X_{1})\right\}  \label{eqn3.19} \\
&&+Ric(X_{1},X_{2})+\mu g_{M_{1}}(X_{1},X_{2})  \notag \\
&=&0,  \notag
\end{eqnarray}

for$~X_{1},X_{2}\in \Gamma (Ker\vartheta _{\ast })^{\bot }.$ If $\xi =Z\in
\Gamma (Ker\vartheta _{\ast })^{\bot },$ then utilizing (\ref{eqn1.1}) in (%
\ref{eqn3.19})%
\begin{eqnarray}
&&\frac{1}{2\sigma ^{2}}\left\{ g_{M_{2}}(\vartheta _{\ast }(\nabla
_{X_{1}}Z),\vartheta _{\ast }X_{2})+g_{M_{2}}(\vartheta _{\ast }(\nabla
_{X_{2}}Z),\vartheta _{\ast }X_{1})\right\}  \label{eqn3.20} \\
&&+Ric(X_{1},X_{2})+\mu g_{M_{1}}(X_{1},X_{2})  \notag \\
&=&0.  \notag
\end{eqnarray}

Using (\ref{eqn2.9}) in (\ref{eqn3.20}), we have%
\begin{eqnarray}
&&\frac{1}{2\sigma ^{2}}\left\{
\begin{array}{c}
g_{M_{2}}(\nabla _{X_{1}}^{M_{2}}\bar{Z}-(\nabla \vartheta _{\ast
})(X_{1},Z),\tilde{X}_{2}) \\
+g_{M_{2}}(\nabla _{X_{2}}^{M_{2}}\bar{Z}-(\nabla \vartheta _{\ast
})(X_{2},Z),\bar{X}_{1})%
\end{array}%
\right\}  \label{eqn3.21} \\
&&+Ric(X_{1},X_{2})+\frac{\mu }{2\sigma ^{2}}g_{M_{2}}(\bar{X}_{1},\tilde{X}%
_{2})  \notag \\
&=&0,  \notag
\end{eqnarray}

where $X_{1},X_{2}$ and $Z$ are the horizontal lift of $\bar{X}_{1},\tilde{X}%
_{2}$ and $\bar{Z},$ respectively. Since $\vartheta $ is totally geodesic,
then $(\nabla \vartheta _{\ast })(X_{1},Z)=0$ and $(\nabla \vartheta _{\ast
})(X_{2},Z)=0.$ Using Theorem \ref{thr1}, Theorem \ref{thr2} and Theorem \ref%
{thr3} in (\ref{eqn3.3}) then (\ref{eqn3.21}) can be written as%
\begin{eqnarray*}
&&\frac{1}{2\sigma ^{2}}\left\{ g_{M_{2}}(\nabla _{X_{1}}^{M_{2}}\bar{Z},%
\tilde{X}_{2})+g_{M_{2}}(\nabla _{\tilde{X}_{2}}^{M_{2}}\bar{Z},\bar{X}%
_{1})\right\} \\
&&+\frac{1}{\sigma ^{2}}Ric^{M_{2}}(\bar{X}_{1},\tilde{X}_{2})+\frac{\mu }{%
\sigma ^{2}}g_{M_{2}}(\bar{X}_{1},\tilde{X}_{2}) \\
&=&0,
\end{eqnarray*}

which completes proof of (i).

If $\xi =W\in \Gamma (Ker\vartheta _{\ast })$ then (\ref{eqn3.19}) becomes
\begin{eqnarray*}
&&\frac{1}{2}\left\{ g_{M_{1}}(\nabla _{X_{1}}W,X_{2})+g_{M_{1}}(\nabla
_{X_{2}}W,X_{1})\right\} \\
&&+Ric(X_{1},X_{2})+\mu g_{M_{1}}(X_{1},X_{2}) \\
&=&0,
\end{eqnarray*}

which can be written as
\begin{eqnarray*}
&&-\frac{1}{2}\left\{ g_{M_{1}}(\nabla _{X_{1}}X_{2},W)+g_{M_{1}}(\nabla
_{X_{2}}X_{1},W)\right\} \\
&&+Ric(X_{1},X_{2})+\mu g_{M_{1}}(X_{1},X_{2}) \\
&=&0.
\end{eqnarray*}

Using (\ref{eqn2.1}) in (\ref{eqn2.6}), we have%
\begin{eqnarray}
&&-\frac{1}{2}\left\{
g_{M_{1}}(S_{X_{1}}X_{2},W)+g_{M_{1}}(S_{X_{2}}X_{1},W)\right\}
\label{eqn3.22} \\
&&+Ric(X_{1},X_{2})+\mu g_{M_{1}}(X_{1},X_{2})  \notag \\
&=&0.  \notag
\end{eqnarray}

Since $\vartheta $ is totally geodesic, then using (\ref{eqn3.3}),
Proposition \ref{prp1} and Theorem \ref{thr3} in (\ref{eqn3.22}), we obtain%
\begin{equation*}
\frac{1}{\sigma ^{2}}Ric^{M_{2}}(\bar{X}_{1},\tilde{X}_{2})+\frac{\mu }{%
\sigma ^{2}}g_{M_{2}}(\bar{X}_{1},\tilde{X}_{2})=0,
\end{equation*}

where $X_{1}$ and $X_{2}$ are the horizontal lift of $\bar{X}_{1}$ and $%
\tilde{X}_{2},$ respectively. This completes the proof of (ii).
\end{proof}

\begin{corollary}
Let $(M_{1},g_{M_{1}},\xi ,\mu )$ be a Ricci soliton with the PVF $W\in
\Gamma (Ker\vartheta _{\ast })$ and $\vartheta
:(M_{1}^{d_{1}},g_{M_{1}})\rightarrow (M_{2}^{d_{2}},g_{M_{2}})$ be a
totally geodesic CCS between Riemannian manifolds. If $Ric^{M_{2}}(\bar{X}%
_{1},\tilde{X}_{2})=-\mu g_{M_{2}}(\bar{X}_{1},\tilde{X}_{2}),$ then $W$ is
a killing vector field on $M_{2}$.
\end{corollary}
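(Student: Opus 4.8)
The plan is to trace through the proof of part (ii) of the preceding theorem and see what happens when the Einstein-type identity $Ric^{M_{2}}(\bar{X}_{1},\tilde{X}_{2})=-\mu\,g_{M_{2}}(\bar{X}_{1},\tilde{X}_{2})$ is imposed as an additional hypothesis. Recall that in the proof of part (ii) of the theorem, choosing the potential vector field $\xi=W\in\Gamma(Ker\vartheta_{\ast})$ and using total geodesicity led to equation (\ref{eqn3.22}), and then applying (\ref{eqn3.3}), Proposition \ref{prp1} and Theorem \ref{thr3} collapsed (\ref{eqn3.22}) to
\begin{equation*}
\frac{1}{\sigma^{2}}Ric^{M_{2}}(\bar{X}_{1},\tilde{X}_{2})+\frac{\mu}{\sigma^{2}}g_{M_{2}}(\bar{X}_{1},\tilde{X}_{2})=0.
\end{equation*}
But the chain of equalities leading to this also passed through the Lie-derivative term $\tfrac{1}{2}(L_{W}g_{M_{1}})(X_{1},X_{2})$, which equals $-\tfrac12\{g_{M_{1}}(S_{X_{1}}X_{2},W)+g_{M_{1}}(S_{X_{2}}X_{1},W)\}$ after using (\ref{eqn2.6}). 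So the idea is: start from the Ricci soliton equation on horizontal vectors, isolate the Lie-derivative term, and substitute the known value of $Ric(X_{1},X_{2})$.

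First I would write the Ricci soliton condition (\ref{eqn1.2}) evaluated on $X_{1},X_{2}\in\Gamma(Ker\vartheta_{\ast})^{\bot}$ with $\xi=W$, giving exactly (\ref{eqn3.22}):
\begin{equation*}
-\tfrac{1}{2}\left\{g_{M_{1}}(S_{X_{1}}X_{2},W)+g_{M_{1}}(S_{X_{2}}X_{1},W)\right\}+Ric(X_{1},X_{2})+\mu g_{M_{1}}(X_{1},X_{2})=0.
\end{equation*}
Next, since $\vartheta$ is a totally geodesic CCS, Theorem \ref{thr3} tells us $\vartheta$ is homothetic (so $\sigma$ is constant, and all the $\nabla\frac{1}{\sigma^{2}}$ terms in (\ref{eqn3.3}) vanish), $(Ker\vartheta_{\ast})^{\bot}$ is totally geodesic (so $S_{X_{1}}X_{2}=0$ by Proposition \ref{prp1} together with integrability/Theorem \ref{thr1}), and the fibers are totally geodesic (so the $T$ and $S$ cross-terms in (\ref{eqn3.3}) drop out). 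Hence $Ric(X_{1},X_{2})=\tfrac{1}{\sigma^{2}}Ric^{M_{2}}(\tilde{X}_{1},\tilde{X}_{2})$. The Lie derivative term also vanishes because $S_{X_{i}}X_{j}=0$. Substituting and then invoking the hypothesis $Ric^{M_{2}}(\bar{X}_{1},\tilde{X}_{2})=-\mu g_{M_{2}}(\bar{X}_{1},\tilde{X}_{2})$ together with (\ref{eqn2.1}) makes the remaining terms cancel.

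What I actually want, though, is a statement about $W$ on $M_{2}$, so I should be slightly more careful: rather than killing the Lie-derivative term too fast, I would keep the relation
\begin{equation*}
\tfrac{1}{2}(L_{W}g_{M_{1}})(X_{1},X_{2})=-Ric(X_{1},X_{2})-\mu g_{M_{1}}(X_{1},X_{2})=-\tfrac{1}{\sigma^{2}}Ric^{M_{2}}(\tilde X_{1},\tilde X_{2})-\tfrac{\mu}{\sigma^{2}}g_{M_{2}}(\tilde X_{1},\tilde X_{2})=0
\end{equation*}
by the Einstein hypothesis, so $(L_{W}g_{M_{1}})(X_{1},X_{2})=0$ for all horizontal $X_{1},X_{2}$. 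Then I would push this down: since $\vartheta$ is a conformal submersion with $\sigma$ constant (homothetic) and $W$ is vertical (tangent to the totally geodesic fibers), the horizontal components $(L_{W}g_{M_{1}})(X_{1},X_{2})$ compute, via $\vartheta$-relatedness of basic vector fields and Lemma \ref{lem2.1}, to a constant multiple of $(L_{\vartheta_{\ast}W}g_{M_{2}})$-type expression along the base — but $\vartheta_{\ast}W=0$. The cleaner route: $W$ being Killing on $M_{2}$ should be interpreted as the induced/projected flow, and the standard fact (for Riemannian/homothetic submersions with totally geodesic fibers and basic $X_i$) is $g_{M_{2}}(\nabla^{M_2}_{\tilde X_1}\tilde W,\tilde X_2)+\cdots$ corresponds to the vertical O'Neill data which here vanish. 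I would state precisely that $(L_W g_{M_1})(X_1,X_2)=0$ descends to Killing on $M_2$ in the appropriate sense.

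The main obstacle is the last descent step: making rigorous sense of "$W$ is a Killing vector field on $M_{2}$" when $W$ is vertical and hence $\vartheta_{\ast}W=0$ — so literally the pushforward is zero and trivially Killing. I suspect the intended reading is that the expression $g_{M_{1}}(\nabla_{X_{1}}W,X_{2})+g_{M_{1}}(\nabla_{X_{2}}W,X_{1})$, which is what the soliton equation forces to vanish, is precisely the horizontal Killing-type obstruction, and under total geodesicity this equals (up to $\sigma^{2}$) the corresponding quantity for the projection; I would phrase the conclusion as: the soliton equation plus the Einstein condition force $(L_{W}g_{M_{1}})(X_{1},X_{2})=0$ on the horizontal distribution, which under the totally geodesic CCS hypothesis is exactly the condition that $W$ be Killing (with respect to the induced structure) on $M_{2}$. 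So the bulk of the write-up is the substitution chain above, and the only genuinely delicate point is stating the descent correctly rather than computing it.
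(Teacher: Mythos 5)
Your derivation is correct and is essentially the argument the paper intends: the corollary carries no separate proof and follows by rerunning the proof of part (ii) of the preceding theorem with the Einstein condition as input — restrict the soliton equation to horizontal vectors with $\xi =W$, use total geodesicity (Theorems \ref{thr1}--\ref{thr3} in (\ref{eqn3.3})) to reduce $Ric(X_{1},X_{2})$ to $\frac{1}{\sigma ^{2}}Ric^{M_{2}}(\bar{X}_{1},\tilde{X}_{2})$, and conclude $(L_{W}g_{M_{1}})(X_{1},X_{2})=0$ on $(Ker\vartheta _{\ast })^{\bot }$. Your reading of ``Killing on $M_{2}$'' as vanishing of this horizontal Lie derivative matches the paper's own convention (compare its later ``conformal/Killing vector field on $Ker\vartheta _{\ast }$'' statements), so the descent issue you flag is an imprecision of the statement rather than a gap in your proof.
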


\begin{theorem}
Let $\zeta $ be a geodesic curve on $M_{1}$ and $(M_{1},g_{M_{1}},\dot{\zeta}%
,\mu )$ be a Ricci soliton with the PVF $\dot{\zeta}\in \Gamma (TM_{1}).$
Let $\vartheta :(M_{1}^{d_{1}},g_{M_{1}})\rightarrow
(M_{2}^{d_{2}},g_{M_{2}})$ be a CCS with $r=e^{\beta }$ between Riemannian
manifolds such that fibers of $\vartheta $ are Einstein. Then $\dot{\zeta}$
is conformal vector field on $Ker\vartheta _{\ast }.$
\end{theorem}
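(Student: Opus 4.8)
The plan is to specialize the Ricci soliton equation \eqref{eqn1.2} to vertical arguments $U_1, U_2 \in \Gamma(Ker\vartheta_\ast)$ with potential vector field $\dot\zeta$, and then use the hypothesis that the fibers are Einstein together with Proposition~\ref{prop3.1}(i) to eliminate the Ricci terms. First I would write out
\[
\tfrac12 (L_{\dot\zeta} g_{M_1})(U_1,U_2) + Ric(U_1,U_2) + \mu g_{M_1}(U_1,U_2) = 0,
\]
and substitute \eqref{eqn3.1} for $Ric(U_1,U_2)$, which gives
\[
\tfrac12 (L_{\dot\zeta} g_{M_1})(U_1,U_2) + Ric^{\nu}(U_1,U_2) + g_{M_1}(U_1,U_2)\big(\mu - (d_1-d_2)|\nabla\beta|^2 - div(\nabla\beta)\big) = 0.
\]
Since the fibers are Einstein, $Ric^{\nu}(U_1,U_2) = \lambda\, g_{M_1}(U_1,U_2)$ for some function $\lambda$, so the middle term also collapses into a multiple of $g_{M_1}(U_1,U_2)$.

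Collecting terms, I obtain $\tfrac12 (L_{\dot\zeta} g_{M_1})(U_1,U_2) = -\,\big(\lambda + \mu - (d_1-d_2)|\nabla\beta|^2 - div(\nabla\beta)\big)\, g_{M_1}(U_1,U_2)$ for all vertical $U_1, U_2$. Writing $\gamma := -\big(\lambda + \mu - (d_1-d_2)|\nabla\beta|^2 - div(\nabla\beta)\big)$, this says precisely that $(L_{\dot\zeta} g_{M_1})|_{Ker\vartheta_\ast} = 2\gamma\, g_{M_1}|_{Ker\vartheta_\ast}$, i.e.\ $\dot\zeta$ restricted to the vertical distribution satisfies the conformal Killing equation with potential function $\gamma$. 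By the definition of a conformal vector field recalled after \eqref{eqn1.2}, this is exactly the assertion that $\dot\zeta$ is a conformal vector field on $Ker\vartheta_\ast$, with $\gamma$ playing the role of the potential function $\beta$ in that definition.

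The only genuinely delicate point is making sure that the Einstein hypothesis on the fibers is invoked correctly: ``Einstein'' for the fibers means $Ric^{\nu} = \lambda g_{M_1}$ on vertical vectors, where a priori $\lambda$ is a smooth function on $M_1$ (constant on each fiber). One should note that this function need not be globally constant, which is why the conclusion is only that $\dot\zeta$ is a \emph{conformal} vector field on $Ker\vartheta_\ast$ rather than Killing; the potential $\gamma$ absorbs $\lambda$ together with the $\beta$-dependent terms coming from the Clairaut structure via Theorem~\ref{Clairaut_condition}. Apart from that bookkeeping, the argument is a direct substitution and needs no further input; the geodesic hypothesis on $\zeta$ is used only implicitly insofar as it allows $\dot\zeta$ to be taken as the potential vector field in the soliton equation.
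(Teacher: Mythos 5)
Your proposal is correct and follows essentially the same route as the paper: restrict the soliton equation \eqref{eqn1.2} to vertical vectors, substitute the vertical Ricci formula \eqref{eqn3.1} of Proposition \ref{prop3.1}, invoke the Einstein condition on the fibers, and read off the conformal Killing equation for $\dot{\zeta}$ on $Ker\vartheta _{\ast }$. The only difference is bookkeeping (and yours is arguably the more careful version): you keep a general Einstein factor $Ric^{\nu }=\lambda \,g_{M_{1}}$ and absorb $\lambda +\mu $ into the conformal potential, whereas the paper takes the specific normalization $Ric^{\nu }(U_{1},U_{2})=-\mu \,g_{M_{1}}(U_{1},U_{2})$ so that $\mu $ cancels and the potential reduces to $(d_{1}-d_{2})\left\Vert \nabla \beta \right\Vert ^{2}+\operatorname{div}(\nabla \beta )$.
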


\begin{proof}
Since $(M_{1},g_{M_{1}},\dot{\zeta},\mu )$ is a Ricci soliton followings are
obtained:%
\begin{equation}
\frac{1}{2}(L_{\dot{\zeta}}g_{M_{1}})(U_{1},U_{2})+Ric(U_{1},U_{2})+\mu
g_{M_{1}}(U_{1},U_{2})=0,~  \label{eqn3.23}
\end{equation}

for$~U_{1},U_{2}\in \Gamma (Ker\vartheta _{\ast })$ and $\dot{\zeta}\in
\Gamma (TM_{1}).$ Using (\ref{eqn3.1}) in (\ref{eqn3.23}), we have%
\begin{eqnarray*}
&&\frac{1}{2}(L_{\dot{\zeta}%
}g_{M_{1}})(U_{1},U_{2})+Ric^{v}(U_{1},U_{2})-(d_{1}-d_{2})g_{M_{1}}(U_{1},U_{2})\left\Vert \nabla \beta \right\Vert ^{2}
\\
&&-g_{M_{1}}(U_{1},U_{2})\func{div}(\nabla \beta )+\mu g_{M_{1}}(U_{1},U_{2})
\\
&=&0.
\end{eqnarray*}

Since fibers of $\vartheta $ is Einstein, using $Ric^{v}(U_{1},U_{2})=-\mu
g_{M_{1}}(U_{1},U_{2})$ we get%
\begin{equation*}
\frac{1}{2}(L_{\dot{\zeta}}g_{M_{1}})(U_{1},U_{2})-\left\{
(d_{1}-d_{2})\left\Vert \nabla \beta \right\Vert ^{2}+\func{div}(\nabla
\beta )\right\} g_{M_{1}}(U_{1},U_{2})=0,
\end{equation*}

which can be written as
\begin{equation*}
(L_{\dot{\zeta}}g_{M_{1}})(U_{1},U_{2})=2\beta _{1}g_{M_{1}}(U_{1},U_{2}),
\end{equation*}

where $\beta _{1}=(d_{1}-d_{2})\left\Vert \nabla \beta \right\Vert ^{2}+%
\func{div}(\nabla \beta )$ is a smooth function on $M_{1}.$ Thus $\dot{\zeta}
$ is conformal vector field on $Ker\vartheta _{\ast }.$
\end{proof}

\begin{corollary}
Let $\zeta $ be a geodesic curve on $M_{1}$ and $(M_{1},g_{M_{1}},\dot{\zeta}%
,\mu )$ be a Ricci soliton with the PVF $\dot{\zeta}\in \Gamma (TM_{1}).$
Let $\vartheta :(M_{1}^{d_{1}},g_{M_{1}})\rightarrow
(M_{2}^{d_{2}},g_{M_{2}})$ be a CCS with $r=e^{\beta }$ between Riemannian
manifolds. If fibers of $\vartheta $ are totally geodesic then, $\dot{\zeta}$
is killing vector field on $Ker\vartheta _{\ast }.$
\end{corollary}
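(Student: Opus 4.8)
The plan is to obtain the corollary from the preceding Theorem by showing that the assumption of totally geodesic fibers forces the conformal factor of $\dot\zeta$ on $Ker\vartheta_{\ast}$ to vanish.

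First I would rephrase ``totally geodesic fibers'' as the vanishing of the vertical O'Neill tensor, that is $T\equiv 0$, so that $T_{U_1}U_2=0$ for all $U_1,U_2\in\Gamma(Ker\vartheta_{\ast})$. On the other hand, Theorem \ref{Clairaut_condition} gives, for a CCS with $r=e^{\beta}$, that $T_{U_1}U_2=-g_{M_1}(U_1,U_2)\nabla\beta$. Comparing the two expressions and choosing $U_1=U_2$ a unit vertical vector (the fibers are positive-dimensional since $d_1>d_2$) yields $\nabla\beta=0$ on $M_1$; in particular $|\nabla\beta|^{2}=0$ and $div(\nabla\beta)=0$.

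Next I would invoke the preceding Theorem, whose hypotheses (Ricci soliton with PVF $\dot\zeta$, CCS with $r=e^{\beta}$, Einstein fibers) are all in force here: it gives $(L_{\dot\zeta}g_{M_1})(U_1,U_2)=2\beta_1\,g_{M_1}(U_1,U_2)$ with $\beta_1=(d_1-d_2)|\nabla\beta|^{2}+div(\nabla\beta)$. The identity $\nabla\beta=0$ established in the previous step makes $\beta_1=0$, whence $(L_{\dot\zeta}g_{M_1})(U_1,U_2)=0$ for all $U_1,U_2\in\Gamma(Ker\vartheta_{\ast})$, i.e.\ $\dot\zeta$ is a Killing vector field on $Ker\vartheta_{\ast}$. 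If one prefers a self-contained argument, the same conclusion follows by restricting the soliton equation (\ref{eqn1.2}) to $\Gamma(Ker\vartheta_{\ast})$, substituting (\ref{eqn3.1}) of Proposition \ref{prop3.1} (the two $\beta$-terms there drop out because $\nabla\beta=0$), and then using the Einstein condition $Ric^{\nu}(U_1,U_2)=-\mu g_{M_1}(U_1,U_2)$ to cancel the remaining curvature and $\mu$ terms. The only step requiring care is the extraction of $\nabla\beta=0$ from $T\equiv 0$, since it relies on the Clairaut characterization (Theorem \ref{Clairaut_condition}) rather than on a direct curvature computation; everything afterwards is a routine substitution.
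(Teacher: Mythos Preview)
Your argument is exactly what the paper intends: the corollary carries no separate proof and is meant to follow from the preceding theorem by observing that totally geodesic fibers give $T\equiv 0$, hence $\nabla\beta=0$ via Theorem~\ref{Clairaut_condition}, so the conformal factor $\beta_1=(d_1-d_2)|\nabla\beta|^{2}+div(\nabla\beta)$ vanishes and $\dot\zeta$ becomes Killing on $Ker\vartheta_{\ast}$.

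One point deserves a remark. You list ``Einstein fibers'' among the hypotheses ``in force here,'' but the corollary as written drops that assumption and replaces it by ``totally geodesic.'' Without the Einstein condition the argument does not close: substituting $\nabla\beta=0$ into (\ref{eqn3.1}) and the soliton equation leaves
\[
\tfrac{1}{2}(L_{\dot\zeta}g_{M_1})(U_1,U_2)+Ric^{\nu}(U_1,U_2)+\mu\,g_{M_1}(U_1,U_2)=0,
\]
and eliminating the last two terms requires precisely $Ric^{\nu}=-\mu\,g_{M_1}$. The paper evidently intends the corollary to inherit the Einstein hypothesis from the theorem it follows (a common convention for corollaries), so your reading is the right one---but you should say so explicitly rather than simply asserting the hypothesis is present.
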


\begin{theorem}
Let $\zeta $ be a geodesic curve on $M_{1}$ and $(M_{1},g_{M_{1}},\dot{\zeta}%
,\mu )$ be a Ricci soliton with the PVF $\dot{\zeta}\in \Gamma (TM_{1}).$
Let $\vartheta :(M_{1}^{d_{1}},g_{M_{1}})\rightarrow
(M_{2}^{d_{2}},g_{M_{2}})$ be a homothetic CCS with $r=e^{\beta }$ from a
Riemannian manifold $M_{1}$ to an Einstein manifold $M_{2}$ such that fibers
of $\vartheta $ are totally geodesic and $(Ker\vartheta _{\ast })^{\bot }$
is integrable. Then $\dot{\zeta}$ is conformal vector field on $%
(Ker\vartheta _{\ast })^{\bot }.$
\end{theorem}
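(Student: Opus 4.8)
The plan is to run the same machine as in the two preceding theorems, but now starting from the Ricci soliton identity tested on horizontal vector fields and exploiting all the simplifications that the hypotheses (homothetic, totally geodesic fibers, integrable horizontal distribution, Einstein base) force on the formula \eqref{eqn3.3}. First I would write out the Ricci soliton condition \eqref{eqn1.2} for $X_{1},X_{2}\in\Gamma(Ker\vartheta_{\ast})^{\bot}$, namely
\begin{equation*}
\tfrac{1}{2}\bigl\{g_{M_{1}}(\nabla_{X_{1}}\dot{\zeta},X_{2})+g_{M_{1}}(\nabla_{X_{2}}\dot{\zeta},X_{1})\bigr\}+Ric(X_{1},X_{2})+\mu g_{M_{1}}(X_{1},X_{2})=0,
\end{equation*}
i.e. $\tfrac{1}{2}(L_{\dot{\zeta}}g_{M_{1}})(X_{1},X_{2})=-Ric(X_{1},X_{2})-\mu g_{M_{1}}(X_{1},X_{2})$. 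So everything reduces to showing that the right-hand side is a pointwise multiple of $g_{M_{1}}(X_{1},X_{2})$ by a smooth function on $M_{1}$.

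Next I would feed the hypotheses into \eqref{eqn3.3}. Since $\vartheta$ is homothetic, $\sigma$ is constant (indeed by Theorem \ref{thr1} and \ref{thr2} $\sigma$ is constant along the fibers, and ``homothetic'' on top kills the horizontal gradient of $\sigma$), so every term containing $\nabla\tfrac{1}{\sigma^{2}}$, $\Delta^{\mathcal{H}}\tfrac{1}{\sigma^{2}}$, $X_{i}(\tfrac{1}{\sigma^{2}})$ vanishes. Totally geodesic fibers give $T\equiv 0$, hence $H=-\nabla\beta=0$ by Theorem \ref{Clairaut_condition}, so $\beta$ is constant, killing the $(d_{1}-d_{2})g_{M_{1}}(X_{2},\nabla_{X_{1}}\nabla\beta)$ and $X_{1}(\beta)X_{2}(\beta)$ terms, and also making the $\sum g_{M_{1}}(S_{X_{1}}U_{k},S_{X_{2}}U_{k})$ and $\operatorname{div}(S_{X_{1}}X_{2})$ contributions collapse via Lemma \ref{lem2.3}(1) and (6) (which with $\nabla_{\nu}\tfrac{1}{\sigma^{2}}=0$ are already zero). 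With $(Ker\vartheta_{\ast})^{\bot}$ integrable, Proposition \ref{prp1} gives $S_{X_{1}}X_{2}=\tfrac{1}{2}\nu[X_{1},X_{2}]-\tfrac{\sigma^{2}}{2}g_{M_{1}}(X_{1},X_{2})(\nabla_{\nu}\tfrac{1}{\sigma^{2}})$, and both pieces vanish (the bracket because the distribution is integrable, the second because $\sigma$ is constant), so $S$-terms disappear entirely. What survives in \eqref{eqn3.3} is just $\tfrac{1}{\sigma^{2}}Ric^{M_{2}}(\tilde{X}_{1},\tilde{X}_{2})$, and since $M_{2}$ is Einstein, $Ric^{M_{2}}(\tilde{X}_{1},\tilde{X}_{2})=\lambda g_{M_{2}}(\tilde{X}_{1},\tilde{X}_{2})=\lambda\sigma^{2}g_{M_{1}}(X_{1},X_{2})$ by \eqref{eqn1.1}. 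Hence $Ric(X_{1},X_{2})=\lambda g_{M_{1}}(X_{1},X_{2})$ on the horizontal distribution.

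Substituting back, $(L_{\dot{\zeta}}g_{M_{1}})(X_{1},X_{2})=-2(\lambda+\mu)g_{M_{1}}(X_{1},X_{2})=2\beta_{2}g_{M_{1}}(X_{1},X_{2})$ with $\beta_{2}=-(\lambda+\mu)$ a smooth function on $M_{1}$, which by the definition recalled in the Introduction (a conformal vector field satisfies $L_{\dot\zeta}g=2\beta g$ with potential function $\beta$) shows $\dot{\zeta}$ is a conformal vector field on $(Ker\vartheta_{\ast})^{\bot}$. I expect the main obstacle to be the careful bookkeeping in step two: making sure that under ``totally geodesic fibers + integrable horizontal distribution + homothetic'' every single term of \eqref{eqn3.3} other than the base-Ricci term genuinely vanishes — in particular checking that $\operatorname{div}(S_{X_{1}}X_{2})$ and the $(\nabla_{X_{1}}T)$-type terms hidden inside the derivation of \eqref{eqn3.3} drop out, rather than merely the explicit $S$- and $T$-monomials. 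Everything else is a direct application of Theorems \ref{thr1}, \ref{thr2}, \ref{thr3}, \ref{Clairaut_condition}, Proposition \ref{prp1}, and Lemma \ref{lem2.3}.
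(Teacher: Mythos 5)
Your proof is correct and follows the same overall route as the paper: test the soliton identity \eqref{eqn1.2} on horizontal fields, simplify the horizontal Ricci formula \eqref{eqn3.3}, and combine conformality with the Einstein condition on $M_{2}$. The one substantive difference is how the dilation terms are handled. The paper never visibly uses the homothety hypothesis: it keeps $-\frac{\sigma ^{2}}{2}\Delta ^{\mathcal{H}}\frac{1}{\sigma ^{2}}$ and $\frac{d_{2}\sigma ^{4}}{4}|\nabla \frac{1}{\sigma ^{2}}|^{2}$, absorbs them together with $\mu $ into a function $\beta _{1}$, and then invokes the Einstein condition in the form $Ric^{M_{2}}=\beta _{2}g_{M_{2}}$ with $\beta _{2}=\sigma ^{2}\beta _{1}$, which quietly identifies the Einstein constant with a particular function. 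You instead note that homothety kills the horizontal gradient of $\sigma $ while the Clairaut condition kills the vertical one (this is Theorem \ref{Clairaut_condition}; your appeal to Theorems \ref{thr1} and \ref{thr2} is unnecessary since their hypotheses are not assumed), so $\sigma $ is constant, all dilation terms vanish, and the Einstein constant $\lambda $ enters as an honest constant, giving $(L_{\dot{\zeta}}g_{M_{1}})(X_{1},X_{2})=-2(\lambda +\mu )g_{M_{1}}(X_{1},X_{2})$ on horizontal vectors. This is cleaner: it explains why homothety is in the hypotheses at all, and it avoids the paper's conflation of the Einstein constant with $\beta _{2}$ (indeed with $\sigma $ constant your conformal factor is constant, so $\dot{\zeta}$ is even homothetic, a fortiori conformal, on $(Ker\vartheta _{\ast })^{\bot }$). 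Your bookkeeping concern resolves exactly as you expect: totally geodesic fibers plus Theorem \ref{Clairaut_condition} give $\nabla \beta =0$, so the $(\nabla _{X_{1}}T)$-type contribution, which appears in \eqref{eqn3.3} as $(d_{1}-d_{2})g_{M_{1}}(X_{2},\nabla _{X_{1}}\nabla \beta )$, vanishes; integrability plus constant $\sigma $ give $S_{X_{1}}X_{2}=0$ by Proposition \ref{prp1}, so $\operatorname{div}(S_{X_{1}}X_{2})=0$; and Lemma \ref{lem2.3} disposes of the remaining $S$-sum.
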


\begin{proof}
Since $(M_{1},g_{M_{1}},\dot{\zeta},\mu )$ is a Ricci soliton then by (\ref%
{eqn1.2}) we get%
\begin{equation*}
\frac{1}{2}(L_{\dot{\zeta}}g_{M_{1}})(X_{1},X_{2})+Ric(X_{1},X_{2})+\mu
g_{M_{1}}(X_{1},X_{2})=0,~
\end{equation*}

for$~X_{1},X_{2}\in \Gamma (Ker\vartheta _{\ast })^{\bot }.$ Using Theorem %
\ref{thr1}, Theorem \ref{thr2} and Theorem \ref{thr3} in (\ref{eqn3.3}) then
we have%
\begin{eqnarray*}
&&\frac{1}{2}(L_{\dot{\zeta}}g_{M_{1}})(X_{1},X_{2})+\frac{1}{\sigma ^{2}}%
Ric^{M_{2}}(\bar{X}_{1},\tilde{X}_{2})-\frac{\sigma ^{2}}{2}%
g_{M_{1}}(X_{1},X_{2})\Delta ^{\mathcal{H}}\frac{1}{\sigma ^{2}} \\
&&+\frac{d_{2}\sigma ^{4}}{4}g_{M_{1}}(X_{1},X_{2})|\nabla \frac{1}{\sigma
^{2}}|^{2}+\mu g_{M_{1}}(X_{1},X_{2}) \\
&=&0.
\end{eqnarray*}

Using (\ref{eqn2.1}) in the above equation%
\begin{eqnarray*}
&&\frac{1}{2}(L_{\dot{\zeta}}g_{M_{1}})(X_{1},X_{2})+\frac{1}{\sigma ^{2}}%
Ric^{M_{2}}(\bar{X}_{1},\tilde{X}_{2})-\frac{1}{2}g_{M_{2}}(\bar{X}_{1},%
\tilde{X}_{2})\Delta ^{\mathcal{H}}\frac{1}{\sigma ^{2}} \\
&&+\frac{d_{2}\sigma ^{4}}{4}g_{M_{2}}(\bar{X}_{1},\tilde{X}_{2})|\nabla
\frac{1}{\sigma ^{2}}|^{2}+\frac{\mu }{\sigma ^{2}}g_{M_{2}}(\bar{X}_{1},%
\tilde{X}_{2}) \\
&=&0,
\end{eqnarray*}

which can be written as%
\begin{equation*}
\frac{1}{2}(L_{\dot{\zeta}}g_{M_{1}})(X_{1},X_{2})+\frac{1}{\sigma ^{2}}%
Ric^{M_{2}}(\bar{X}_{1},\tilde{X}_{2})+\beta _{1}g_{M_{2}}(\bar{X}_{1},%
\tilde{X}_{2})=0,
\end{equation*}

or
\begin{equation}
\frac{1}{2}(L_{\dot{\zeta}}g_{M_{1}})(X_{1},X_{2})+\frac{1}{\sigma ^{2}}%
\left\{ Ric^{M_{2}}(\bar{X}_{1},\tilde{X}_{2})+\beta _{2}g_{M_{2}}(\bar{X}%
_{1},\tilde{X}_{2})\right\} =0,  \label{eqn3.24}
\end{equation}%
where $\beta _{1}=-\frac{1}{2}\Delta ^{\mathcal{H}}\frac{1}{\sigma ^{2}}+%
\frac{d_{2}\sigma ^{4}}{4}|\nabla \frac{1}{\sigma ^{2}}|^{2}+\frac{\mu }{%
\sigma ^{2}}$ $\ $and $\beta _{2}=\sigma ^{2}\beta _{1}$ Since $M_{2}$ is an
Einstein, putting $Ric^{M_{2}}(\tilde{X}_{1},\tilde{X}_{2})=\beta
_{2}g_{M_{2}}(\tilde{X}_{1},\tilde{X}_{2})$ in (\ref{eqn3.24}), we obtain%
\begin{equation*}
\frac{1}{2}(L_{\dot{\zeta}}g_{M_{1}})(X_{1},X_{2})+2\beta
_{1}g_{M_{1}}(X_{1},X_{2})=0,
\end{equation*}

which implies that $\dot{\zeta}$ is conformal vector field on $(Ker\vartheta
_{\ast })^{\bot }$.
\end{proof}

\begin{theorem}
\label{Thr4} \cite{Meena_2023}Let $\vartheta
:(M_{1}^{d_{1}},g_{M_{1}})\rightarrow (M_{2}^{d_{2}},g_{M_{2}})$ be a
totally geodesic HCS with dilation $\sigma .$ Then
\begin{equation*}
s=s^{v}+\frac{1}{\sigma ^{2}}s^{M_{2}},
\end{equation*}

where $s,s^{v}$ and $s^{M_{2}}$ indicate the scalar curvatures of $%
M_{1},Ker\vartheta _{\ast }$ and $M_{2},$ respectively.
\end{theorem}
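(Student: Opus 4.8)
The plan is to compute the scalar curvature $s$ of $M_{1}$ by taking the full trace of the Ricci tensor over an adapted orthonormal frame, splitting the sum into a vertical block $\{U_{k}\}_{d_{2}+1\le k\le d_{1}}$ and a horizontal block $\{X_{l}\}_{1\le l\le d_{2}}$, so that
\begin{equation*}
s=\sum_{k=d_{2}+1}^{d_{1}}Ric(U_{k},U_{k})+\sum_{l=1}^{d_{2}}Ric(X_{l},X_{l}).
\end{equation*}
For the vertical part I would feed $U_{1}=U_{2}=U_{k}$ into the general formula \eqref{Ric(U,V)} and sum over $k$; for the horizontal part I would feed $X_{1}=X_{2}=X_{l}$ into \eqref{Ric(X,Y)} and sum over $l$, recognising that $\sum_{k}Ric^{\nu}(U_{k},U_{k})=s^{v}$ and $\sum_{l}\frac{1}{\sigma^{2}}Ric^{M_{2}}(\tilde X_{l},\tilde X_{l})=\frac{1}{\sigma^{2}}s^{M_{2}}$ once we use that the $\tilde X_{l}$ form an orthonormal frame on $M_{2}$ scaled by $\sigma$.

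The key simplification is the totally geodesic hypothesis. By Theorem \ref{thr3}, $\vartheta$ totally geodesic forces the fibers to be totally geodesic (so $T\equiv 0$ and $H=0$), $(Ker\vartheta_{\ast})^{\bot}$ to be totally geodesic (so $S_{X}X=0$ for horizontal $X$, and by Proposition \ref{prp1} the vertical part $\nu[X_{1},X_{2}]$ also vanishes, so $S_{X_{1}}U=0$ as well — here Theorems \ref{thr1} and \ref{thr2} also give $\sigma$ constant on the fibers), and $\vartheta$ homothetic (so $\nabla\tfrac{1}{\sigma^{2}}$ is vertical with zero horizontal part, killing every term containing $grad_{\mathcal H}\tfrac{1}{\sigma^{2}}$, $\nabla_{\nu}\tfrac{1}{\sigma^{2}}$ paired against horizontal vectors, $\Delta^{\mathcal H}\tfrac{1}{\sigma^{2}}$, and the $(X_{l}\tfrac{1}{\sigma^{2}})$ factors). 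After these cancellations, essentially every term in \eqref{Ric(U,V)} except $Ric^{\nu}$ dies, and every term in \eqref{Ric(X,Y)} except $\frac{1}{\sigma^{2}}Ric^{M_{2}}$ dies; the two leftover traces assemble into $s^{v}+\frac{1}{\sigma^{2}}s^{M_{2}}$.

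I would then address the cross terms: since the frame is $\{U_{k}\}\cup\{X_{l}\}$, the trace of $Ric$ is genuinely the sum of the two diagonal blocks, and the off-diagonal entries $Ric(U_{k},X_{l})$ do not enter the scalar curvature at all, so \eqref{Ric(U,X)} is not needed. The one point requiring care is the $O'$Neill-type term $\sum_{l}g_{M_{1}}(\nu[X_{1},X_{l}],\nu[X_{l},X_{2}])$ appearing with a $\tfrac34$ coefficient in \eqref{Ric(X,Y)}: when $(Ker\vartheta_{\ast})^{\bot}$ is totally geodesic Proposition \ref{prp1} gives $\nu[X_{1},X_{2}]=\sigma^{2}g_{M_{1}}(X_{1},X_{2})(\nabla_{\nu}\tfrac{1}{\sigma^{2}})$, which is zero under homotheticity, so this term also vanishes; I would note this explicitly since it is the least obvious cancellation. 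The main obstacle is purely bookkeeping — making sure that each of the numerous terms in \eqref{Ric(U,V)} and \eqref{Ric(X,Y)} is correctly classified as "vanishing under $T=0$", "vanishing under $S=0$", or "vanishing under homotheticity" — rather than any conceptual difficulty; once the terms are tracked the identity $s=s^{v}+\tfrac{1}{\sigma^{2}}s^{M_{2}}$ drops out immediately, which incidentally explains why the paper attributes this statement to \cite{Meena_2023} and states it without reproving it here.
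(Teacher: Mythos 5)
First, a point of comparison: the paper itself gives no proof of Theorem \ref{Thr4} --- it is quoted from \cite{Meena_2023} --- so there is no internal argument to match against; the closest thing is the trace computation in the proof of Theorem \ref{Thr5}, and your strategy (trace the Ricci formulas \eqref{Ric(U,V)} and \eqref{Ric(X,Y)} over an adapted frame, kill terms using $T\equiv 0$, $H'=0$, $S_{X}Y=0$, and the constancy of $\sigma$ coming from Theorems \ref{thr1}--\ref{thr3}, and dispose of the $\nu[X_{1},X_{l}]$ term via Proposition \ref{prp1}) is exactly that style of argument. The cancellation bookkeeping you describe is correct, the observation that the mixed terms $Ric(U_{k},X_{l})$ never enter the scalar curvature is right, and Lemma \ref{lem2.3} does reduce the remaining $S$-sums to expressions in $H'$ and $\nabla_{\nu}\tfrac{1}{\sigma^{2}}$, which vanish.

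The genuine gap is in the very last step, the only place where the specific factor $\tfrac{1}{\sigma^{2}}$ in the statement has to be produced. You take $\{X_{l}\}$ orthonormal on $M_{1}$ and then remark that the $\tilde X_{l}=\vartheta_{\ast}X_{l}$ are ``an orthonormal frame on $M_{2}$ scaled by $\sigma$''; but with that normalization $\sum_{l}Ric^{M_{2}}(\tilde X_{l},\tilde X_{l})=\sigma^{2}\,s^{M_{2}}\circ\vartheta$, so the surviving horizontal trace $\tfrac{1}{\sigma^{2}}\sum_{l}Ric^{M_{2}}(\tilde X_{l},\tilde X_{l})$ equals $s^{M_{2}}\circ\vartheta$, not $\tfrac{1}{\sigma^{2}}s^{M_{2}}$ as you conclude --- your own parenthetical contradicts the formula you claim ``drops out immediately.'' To land on the stated identity you must pin down which frame the term $\tfrac{1}{\sigma^{2}}Ric^{M_{2}}(\tilde X_{1},\tilde X_{2})$ in \eqref{Ric(X,Y)} is calibrated to (horizontal lifts of an $M_{2}$-orthonormal frame, which are \emph{not} unit on $M_{1}$, versus an $M_{1}$-orthonormal horizontal frame), since the two choices differ by exactly the factor $\sigma^{2}$ that is at stake; note that under the hypotheses $\sigma$ is constant (horizontally homothetic by Theorem \ref{thr3} and constant on fibers by Theorem \ref{thr1}), so $M_{1}$ is locally a product $F\times N$ with $\vartheta$ a constant homothety on the horizontal leaves, and a direct check of that local model (where $g_{M_{2}}=\sigma^{2}g_{N}$ up to pullback) is the cleanest way to fix the normalization and verify the coefficient in front of $s^{M_{2}}$. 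Everything else in your plan is routine bookkeeping; this frame/dilation calibration is the one step that cannot be waved through, and as written it is not closed.
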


Using Theorem \ref{Thr4}, the following result can be written

\begin{theorem}
\label{Thr5}Let $(M_{1},g_{M_{1}},\dot{\zeta},\mu )$ be a Ricci soliton with
the PVF $\dot{\zeta}\in \Gamma (TM_{1}).$ and $\vartheta
:(M_{1}^{d_{1}},g_{M_{1}})\rightarrow (M_{2}^{d_{2}},g_{M_{2}})$ be a
totally geodesic CCS with $r=e^{\beta }$ between Riemannian manifolds such
that $(Ker\vartheta _{\ast })^{\bot }$ is integrable. Then $M_{1}$ has
constant scalar curvature by $-\mu d_{1}.$
\end{theorem}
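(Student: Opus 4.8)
The plan is to take the trace of the Ricci soliton equation \eqref{eqn1.2} for the potential vector field $\dot{\zeta}$ and combine it with the trace decomposition of the scalar curvature supplied by Theorem \ref{Thr4}. First I would contract \eqref{eqn1.2} with an orthonormal frame $\{e_k\}_{1\le k\le d_1}$ of $TM_1$: using \eqref{eqn2.19} this yields
\begin{equation*}
\operatorname{div}(\dot{\zeta}) + s + \mu d_1 = 0,
\end{equation*}
since $\tfrac12\sum_k (L_{\dot{\zeta}}g_{M_1})(e_k,e_k) = \sum_k g_{M_1}(\nabla_{e_k}\dot{\zeta},e_k) = \operatorname{div}(\dot{\zeta})$, $\sum_k Ric(e_k,e_k) = s$, and $\sum_k \mu\, g_{M_1}(e_k,e_k) = \mu d_1$.

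Next I would argue that $\operatorname{div}(\dot{\zeta}) = 0$. Because $\zeta$ is a geodesic, $\nabla_{\dot{\zeta}}\dot{\zeta} = 0$, so the $\dot{\zeta}$-direction contributes nothing to the divergence sum; the remaining terms should be controlled using the structure of a totally geodesic CCS with $(Ker\vartheta_\ast)^\perp$ integrable. Here is where I would invoke Theorem \ref{Clairaut_condition} (fibers totally umbilical with $H=-\nabla\beta$, $\nabla\beta$ horizontal, $\sigma$ constant along fibers), Theorem \ref{thr1} and Theorem \ref{thr3} (total geodesy forces $\vartheta$ homothetic, fibers totally geodesic, $(Ker\vartheta_\ast)^\perp$ totally geodesic, hence $\beta$ and $\sigma$ effectively constant in the relevant directions), to conclude that the mean curvature vector field of the fibers vanishes and the second fundamental form of $(Ker\vartheta_\ast)^\perp$ vanishes, so $\operatorname{div}(\dot{\zeta})$ collapses to zero. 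From the traced soliton identity we then get $s = -\mu d_1$, a constant.

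Finally, to make this consistent with Theorem \ref{Thr4}, which writes $s = s^v + \frac{1}{\sigma^2}s^{M_2}$, I would note that constancy of $s$ together with the totally geodesic/homothetic hypotheses (so $\sigma$ is constant) forces $s^v + \frac{1}{\sigma^2}s^{M_2} = -\mu d_1$; this is the structural content, but the headline conclusion is simply that $M_1$ has constant scalar curvature equal to $-\mu d_1$. I expect the main obstacle to be the rigorous justification that $\operatorname{div}(\dot{\zeta}) = 0$: one must be careful that being a Ricci soliton with a \emph{geodesic} potential vector field, rather than merely a Killing or gradient field, genuinely forces the divergence term to drop out, and this is exactly where the totally geodesic CCS hypotheses (via Theorems \ref{Clairaut_condition}, \ref{thr1}, \ref{thr3}) must be used in full strength rather than just cited decoratively.
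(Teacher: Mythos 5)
Your overall strategy coincides with the paper's: trace the soliton equation \eqref{eqn1.2}, argue that the Lie-derivative contribution drops out, and conclude $s=-\mu d_{1}$ (the paper then matches this with Theorem \ref{Thr4}). The problem is the middle step, which you yourself flag as the main obstacle and then do not close: the claim $\operatorname{div}(\dot{\zeta})=0$ is never established, and the justification you sketch cannot work. Geodesy gives only $\nabla_{\dot{\zeta}}\dot{\zeta}=0$, which controls at most the single frame direction along $\dot{\zeta}$, whereas $\operatorname{div}(\dot{\zeta})=\sum_{k}g_{M_{1}}(\nabla_{e_{k}}\dot{\zeta},e_{k})$ involves derivatives of $\dot{\zeta}$ in all directions; a radial geodesic field in Euclidean space has nonvanishing divergence, so geodesy alone is nowhere near enough. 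The hypotheses you propose to invoke to control the remaining terms -- Theorem \ref{Clairaut_condition}, Theorem \ref{thr1}, Theorem \ref{thr3} -- constrain the O'Neill tensors $T$ and $S$, the mean curvature $H=-\nabla\beta$, and the dilation $\sigma$, i.e. the geometry of the vertical and horizontal distributions; they impose no condition whatsoever on the covariant derivatives of the particular vector field $\dot{\zeta}$ in directions orthogonal to $\dot{\zeta}$. So "the remaining terms should be controlled using the structure of a totally geodesic CCS" is exactly the missing idea, not a gap that the cited theorems can fill.

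For comparison, the paper avoids any statement about $\operatorname{div}(\dot{\zeta})$: it decomposes $X_{1},X_{2}$ and $\dot{\zeta}$ into vertical and horizontal parts, traces over the adapted frames $\{U_{k}\}$ and $\{X_{l}\}$, uses Theorems \ref{thr1}--\ref{thr3} in \eqref{eqn3.1}--\eqref{eqn3.3} to reduce the Ricci trace to $s^{v}+\frac{1}{\sigma^{2}}s^{M_{2}}$, identifies this with $s$ by Theorem \ref{Thr4}, and disposes of the Lie-derivative contribution by the metric-connection identity $\nabla_{e_{k}}g_{M_{1}}(e_{k},e_{k})=0$ applied to the orthonormal frame. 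Your direct global trace giving $\operatorname{div}(\dot{\zeta})+s+\mu d_{1}=0$ is correct as far as it goes and is cleaner than the paper's adapted-frame bookkeeping, but to turn it into a proof of this theorem you must either produce an honest argument that the soliton/CCS hypotheses force $\operatorname{div}(\dot{\zeta})=0$ (which the stated hypotheses do not obviously do -- note the theorem as stated does not even assume $\zeta$ is a geodesic) or follow the paper's reduction through \eqref{eqn3.1}--\eqref{eqn3.3} and Theorem \ref{Thr4}.
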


\begin{proof}
Since $(M_{1},g_{M_{1}},\dot{\zeta},\mu )$ is a Ricci soliton then by (\ref%
{eqn1.2}) we have%
\begin{eqnarray}
&&\frac{1}{2}(g_{M_{1}}(\nabla _{X_{1}}\dot{\zeta},X_{2})+g_{M_{1}}(\nabla
_{X_{2}}\dot{\zeta},X_{1}))  \label{eqn3.25} \\
&&+Ric(X_{1},X_{2})+\mu g_{M_{1}}(X_{1},X_{2})  \notag \\
&=&0,  \notag
\end{eqnarray}

for $X_{1},X_{2},\dot{\zeta}\in \Gamma (TM_{1}).$ Now, we decompose $%
X_{1},X_{2}$ and $\dot{\zeta}$ such that $X_{1}=\nu X_{1}+\mathcal{H}%
X_{1},X_{2}=\nu X_{2}+\mathcal{H}X_{2}$ and $\dot{\zeta}=\nu W+\mathcal{H}Z$%
. Then (\ref{eqn3.25}) stated%
\begin{eqnarray}
&&\frac{1}{2}\left\{
\begin{array}{c}
g_{M_{1}}(\nabla _{\nu X_{1}+\mathcal{H}X_{1}}\nu W+\mathcal{H}Z,\nu X_{2}+%
\mathcal{H}X_{2}) \\
+g_{M_{1}}(\nabla _{\nu X_{2}+\mathcal{H}X_{2}}\nu W+\mathcal{H}Z,\nu X_{1}+%
\mathcal{H}X_{1})%
\end{array}%
\right\}  \label{eqn3.26} \\
&&+Ric(vX_{1},vX_{2})+Ric(\mathcal{H}X_{1},\mathcal{H}X_{2})+Ric(vX_{1},%
\mathcal{H}X_{2})  \notag \\
&&+Ric(\mathcal{H}X_{1},vX_{2})+\mu \left\{ g_{M_{1}}(vX_{1},vX_{2})+(%
\mathcal{H}X_{1},\mathcal{H}X_{2})\right\}  \notag \\
&=&0.  \notag
\end{eqnarray}

Taking trace of (\ref{eqn3.26}), we have
\begin{eqnarray}
&&\frac{1}{2}\left\{ 2\sum\limits_{l=1}^{d_{2}}g_{M_{1}}(\nabla
_{X_{l}}X_{l},X_{l})+2\sum\limits_{k=d_{2}+1}^{d_{1}}g_{M_{1}}(\nabla
_{Uk}U_{k},U_{k})\right\}  \label{eqn3.27} \\
&&+\sum\limits_{k=d_{2}+1}^{d_{1}}Ric(U_{k},U_{k})+\sum%
\limits_{l=1}^{d_{2}}Ric(X_{l},X_{l})+2\sum\limits_{k,l}Ric(U_{k},X_{l})
\notag \\
&&+\mu \left\{
\sum\limits_{l=1}^{d_{2}}g_{M_{1}}(X_{l},X_{l})+\sum%
\limits_{k=d_{2}+1}^{d_{1}}g_{M_{1}}(U_{k},U_{k})\right\}  \notag \\
&=&0,  \notag
\end{eqnarray}

where $\{U_{k}\}_{d_{2}+1\leq k\leq d_{1}}$ is orthonormal bases of $%
Ker\vartheta _{\ast }$ and $\{X_{l}\}_{1\leq l\leq d_{2}}$ is orthonormal
bases of $(Ker\vartheta _{\ast })^{\bot }$. Since $\vartheta $ is totally
geodesic and $(Ker\vartheta _{\ast })^{\bot }$ is integrable, then using
Theorem \ref{thr1}, Theorem \ref{thr2} and Theorem \ref{thr3} in (\ref%
{eqn3.1}), (\ref{eqn3.2}) and (\ref{eqn3.3}), we can write (\ref{eqn3.27}) as%
\begin{eqnarray}
&&\sum\limits_{k=d_{2}+1}^{d_{1}}g_{M_{1}}(\nabla
_{Uk}U_{k},U_{k})+\sum\limits_{l=1}^{d_{2}}g_{M_{1}}(\nabla
_{X_{l}}X_{l},X_{l})  \label{eqn3.28} \\
&&+Ric^{v}(U_{k},U_{k})+\frac{1}{\sigma ^{2}}Ric^{M_{2}}(\bar{X}_{1},\tilde{X%
}_{2})+\mu (d_{1}-d_{2}+d_{2})  \notag \\
&=&0,  \notag
\end{eqnarray}

where $Ric^{v}$ and $Ric^{M_{2}}$ indicate the scalar curvatures of $%
Ker\vartheta _{\ast }$ and $M_{2},$ respectively. From Theorem \ref{Thr4}, (%
\ref{eqn3.28}) and since $\nabla $ is metric connection on $M_{1},$ then we
have%
\begin{eqnarray*}
&&\frac{1}{2}\left\{ \sum\limits_{l=1}^{d_{2}}\nabla
_{X_{l}}g_{M_{1}}(X_{l},X_{l})+\sum\limits_{k=d_{2}+1}^{d_{1}}\nabla
_{U_{k}}g_{M_{1}}(U_{k},U_{k})\right\} \\
&&+Ric^{v}+\frac{1}{\sigma ^{2}}Ric^{M_{2}}+\mu d_{1} \\
&=&0.
\end{eqnarray*}

Thus we obtain $s+\mu d_{1}=0,$ where $s=Ric^{v}+\frac{1}{\sigma ^{2}}%
Ric^{M_{2}}$ is the scalar curvature on $M_{1}$.
\end{proof}

\begin{theorem}
Let $(M_{1},g_{M_{1}},-H,\mu )$ be a Ricci soliton with the PVF $-H\in
(Ker\vartheta _{\ast })^{\bot }$ and $\vartheta
:(M_{1}^{d_{1}},g_{M_{1}})\rightarrow (M_{2}^{d_{2}},g_{M_{2}})$ be a CCS
with $r=e^{\beta }$ between Riemannian manifolds. Then the following options
are true:

(i) $M_{1}$ admits a gradient Ricci soliton,

(ii) The mean curvature vector field of $Ker\vartheta _{\ast }$ is constant.
\end{theorem}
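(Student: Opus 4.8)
The plan is to exploit the Clairaut condition, which by Theorem~\ref{Clairaut_condition} tells us that the fibers of $\vartheta$ are totally umbilical with mean curvature vector field $H=-\nabla\beta$, so $-H=\nabla\beta$ is precisely a gradient vector field. First I would write down the Ricci soliton equation \eqref{eqn1.2} with potential vector field $\xi=-H$:
\begin{equation*}
\frac{1}{2}(L_{-H}g_{M_{1}})+Ric+\mu g_{M_{1}}=0.
\end{equation*}
Since $-H=\nabla\beta$ by Theorem~\ref{Clairaut_condition}, this reads $\frac12(L_{\nabla\beta}g_{M_{1}})+Ric+\mu g_{M_{1}}=0$. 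The standard identity $(L_{\nabla\beta}g_{M_{1}})(E_{1},E_{2})=2\,Hess\,\beta(E_{1},E_{2})$ (which follows from Lemma~\ref{lem2.2} together with \eqref{eqn2.21} and the symmetry of the Hessian) immediately converts this into $Hess\,\beta+Ric+\mu g_{M_{1}}=0$, i.e. the defining equation of a gradient Ricci soliton with potential function $\beta$. That establishes part~(i).

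For part~(ii) I would feed the extra structural information about $-H$ being the potential vector field back into the off-diagonal Ricci identity \eqref{eqn3.2}. Writing the Ricci soliton equation on a pair $(U_{1},X_{1})$ with $U_{1}\in\Gamma(Ker\vartheta_{\ast})$, $X_{1}\in\Gamma(Ker\vartheta_{\ast})^{\bot}$ gives
\begin{equation*}
\frac{1}{2}\{g_{M_{1}}(\nabla_{U_{1}}(-H),X_{1})+g_{M_{1}}(\nabla_{X_{1}}(-H),U_{1})\}+Ric(U_{1},X_{1})=0,
\end{equation*}
because $g_{M_{1}}(U_{1},X_{1})=0$. Using $-H=\nabla\beta$, Lemma~\ref{lem2.2} symmetrizes the bracket to $g_{M_{1}}(\nabla_{U_{1}}\nabla\beta,X_{1})$, and \eqref{eqn3.8} (from the proof of Proposition~\ref{prop3.1}) identifies this with $-g_{M_{1}}(\nabla_{U_{1}}H,X_{1})$. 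Combining with the expression \eqref{eqn3.2} for $Ric(U_{1},X_{1})$ and the fibers being totally umbilical (so that $S_{X_{1}}U_{1}$ and the sum term can be rewritten via Theorem~\ref{Clairaut_condition} and Proposition~\ref{prp1}), one should be able to collapse everything to an equation forcing $\nabla_{U_{1}}H=0$ for all vertical $U_{1}$; together with the fact that $\nabla\beta=-H$ is already horizontal and that $\sigma$ is constant along the fibers, this says $H$ is constant, i.e. parallel along $Ker\vartheta_{\ast}$.

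The hard part will be the bookkeeping in part~(ii): correctly handling the two sum terms in \eqref{eqn3.2}, namely $\sum_{l}\sum_{k}g_{M_{1}}(\nabla_{X_{l}}S_{X_{1}}X_{l},U_{k})$, and showing that under the Clairaut hypothesis and the soliton structure these reduce to something expressible purely in terms of derivatives of $H$. I would lean on Proposition~\ref{prp1} to rewrite $S_{X_{1}}X_{l}$ in terms of $\nu[X_{1},X_{l}]$ and $\nabla_{\nu}\tfrac{1}{\sigma^{2}}$, then use that $\sigma$ is constant on the fibers (Theorem~\ref{Clairaut_condition}) to kill the $\nabla_{\nu}\tfrac{1}{\sigma^{2}}$ contribution, leaving a curvature-type term that pairs against $\nabla_{U_{1}}H$. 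The remaining work is purely computational; the conceptual content is just that the Clairaut condition makes $-H$ a gradient field and that the mixed Ricci component then becomes an exact derivative of $H$ which the soliton equation sets to zero.
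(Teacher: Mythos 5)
Your part (i) is essentially the paper's own argument: substitute $-H=\nabla\beta$ from Theorem~\ref{Clairaut_condition} into the soliton equation, convert the symmetrized Lie-derivative term into the Hessian via self-adjointness of $h_\beta$ (Lemma~\ref{lem2.2} and \eqref{eqn2.21}), and read off $Hess\,\beta+Ric+\mu g_{M_{1}}=0$. No issue there.

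Part (ii) is where you diverge from the paper, and there is a genuine gap. Evaluating the soliton equation on mixed pairs $(U_{1},X_{1})$ and inserting \eqref{eqn3.2} gives $g_{M_{1}}(\nabla_{U_{1}}H,X_{1})=(d_{1}-d_{2}+1)\,g_{M_{1}}(S_{X_{1}}U_{1},\nabla\beta)-\sum_{l}\sum_{k}g_{M_{1}}(\nabla_{X_{l}}S_{X_{1}}X_{l},U_{k})$, and the right-hand side does not collapse to zero under the CCS hypothesis alone. Constancy of $\sigma$ along the fibers only kills the $\nabla_{\nu}\frac{1}{\sigma^{2}}$ part of $S$ in Proposition~\ref{prp1}; the surviving pieces involve $\nu[X_{1},\nabla\beta]$ and derivatives of $\nu[X_{1},X_{l}]$, which vanish only if $(Ker\vartheta_{\ast})^{\bot}$ is integrable, an assumption you never invoke. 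Moreover, even granting $\nabla_{U}H=0$ for all vertical $U$, that only makes $H$ parallel along the fibers, which is strictly weaker than the asserted conclusion that $H$ is constant. The paper instead proves (ii) by tracing the gradient-soliton identity obtained in (i), which yields the Poisson equation $\Delta\beta+s+\mu d_{1}=0$, and then uses its constant-scalar-curvature result (Theorem~\ref{Thr5}, $s=-\mu d_{1}$) to get $\Delta\beta=div(\nabla\beta)=0$ and hence that $H$ is constant; your route never produces this divergence statement, which is what the stated conclusion actually rests on. (To be fair, the paper's step also silently uses the hypotheses of Theorem~\ref{Thr5} — totally geodesic $\vartheta$ and integrable horizontal distribution — which are absent from the theorem's statement, and reads $\Delta\beta=0$ as $\nabla H=0$; but as written your mixed-component argument does not close either.)
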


\begin{proof}
Since $(M_{1},g_{M_{1}},-H,\mu )$ is a Ricci soliton then using (\ref{eqn1.2}%
), we obtain%
\begin{eqnarray}
&&-\frac{1}{2}(g_{M_{1}}(\nabla _{X_{1}}H,X_{2})+g_{M_{1}}(\nabla
_{X_{2}}H,X_{1}))  \label{eqn3.29} \\
&&+Ric(X_{1},X_{2})+\mu g_{M_{1}}(X_{1},X_{2})  \notag \\
&=&0,  \notag
\end{eqnarray}

for $X_{1},X_{2}\in \Gamma (TM_{1})$ and $-H\in \Gamma (Ker\vartheta _{\ast
})^{\bot }.$ Using Theorem \ref{Clairaut_condition}, then (\ref{eqn3.29})
can be written as%
\begin{eqnarray*}
&&\frac{1}{2}(g_{M_{1}}(\nabla _{X_{1}}\nabla \beta ,X_{2})+g_{M_{1}}(\nabla
_{X_{2}}\nabla \beta ,X_{1})) \\
&&+Ric(X_{1},X_{2})+\mu g_{M_{1}}(X_{1},X_{2}) \\
&=&0,
\end{eqnarray*}

which is equal to%
\begin{eqnarray}
&&\frac{1}{2}(g_{M_{1}}(g_{M_{2}\beta
}(X_{1}),X_{2})+g_{M_{1}}(g_{M_{2}\beta }(X_{2}),X_{1}))  \label{eqn3.30} \\
&&+Ric(X_{1},X_{2})+\mu g_{M_{1}}(X_{1},X_{2})  \notag \\
&=&0.  \notag
\end{eqnarray}

Since $h_{\beta }$ is self-adjoint, (\ref{eqn3.30}) can be written as%
\begin{equation*}
g_{M_{1}}(h_{\beta }(X_{1}),X_{2})+Ric(X_{1},X_{2})+\mu
g_{M_{1}}(X_{1},X_{2})=0.
\end{equation*}

Using (\ref{eqn2.21}) in above equation%
\begin{equation}
Hess\beta (X_{1},X_{2})+Ric(X_{1},X_{2})+\mu g_{M_{1}}(X_{1},X_{2})=0,
\label{eqn3.31}
\end{equation}

which completes the proof of (i). If we taking trace (\ref{eqn3.31}), from (%
\ref{eqn2.22}) we have%
\begin{equation*}
\Delta \beta +s+\mu d_{1}=0.
\end{equation*}

Thus, the Poisson equation on $(M_{1},g_{M_{1}})$ is
\begin{equation}
\Delta \beta =\func{div}(\nabla \beta )=-s-\mu d_{1},  \label{eqn3.32}
\end{equation}

where $\Delta $ is the Laplace operator, $\beta $ is a smooth function on $%
M_{1}.$ To determine its solution using Theorem \ref{Thr5} in (\ref{eqn3.32}%
), we obtain%
\begin{equation*}
\Delta \beta =\func{div}(\nabla \beta )=0,i.e.\nabla (H)=0,
\end{equation*}

which means $H$ is constant. This completes the proof of (ii).
\end{proof}

\begin{definition}
\cite{Sahin_2017} Let $\vartheta :(M_{1}^{d_{1}},g_{M_{1}})\rightarrow
(M_{2}^{d_{2}},g_{M_{2}})$ be a smooth map between Riemannian manifolds.
Then $\vartheta $ is harmonic if and only if the tension field $\tau
(\vartheta )$ of $\vartheta $ vanishes at every node $p_{1}\in M_{1}.$
\end{definition}

Considering this definition, the following theorem can be written:

\begin{theorem}
Let $(M_{1},g_{M_{1}},W,\mu )$ be a Ricci soliton with the PVF $W\in \Gamma
(Ker\vartheta _{\ast })$ and $\vartheta
:(M_{1}^{d_{1}},g_{M_{1}})\rightarrow (M_{2}^{d_{2}},g_{M_{2}})$ be a
totally geodesic CCS with $r=e^{\beta }$ between Riemannian manifolds such
that $(Ker\vartheta _{\ast })^{\bot }$ is totally geodesic. Then $\vartheta $
is harmonic if and only if the scalar curvature of $Ker\vartheta _{\ast }$
is $-\mu (d_{1}-d_{2}).$
\end{theorem}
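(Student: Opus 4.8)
The plan is to characterize harmonicity through the tension field $\tau(\vartheta)$, which for a submersion is governed by the mean curvature of the fibers, and then to feed in the Ricci soliton hypothesis to turn that condition into a statement about the scalar curvature of $Ker\vartheta_\ast$. First I would recall the standard formula for the tension field of a horizontally conformal submersion: up to the conformal factor, $\tau(\vartheta) = -(d_1-d_2)\,\vartheta_\ast(H) + (2-d_2)\,\vartheta_\ast(\mathrm{grad}_{\mathcal H}\ln\sigma)$, where $H$ is the mean curvature vector field of the fibers. Since $(Ker\vartheta_\ast)^\bot$ is totally geodesic, Theorem \ref{thr1} gives that $\sigma$ is constant on $Ker\vartheta_\ast$; combined with the fact that for a Clairaut submersion $\sigma$ is already constant along the fibers (Theorem \ref{Clairaut_condition}), I would argue the dilation term contributes controllably, and in fact the cleanest route is to observe that for a totally geodesic CCS with $(Ker\vartheta_\ast)^\bot$ totally geodesic, Theorem \ref{thr3} forces $\vartheta$ to be homothetic, so $\mathrm{grad}_{\mathcal H}\sigma = 0$ and $\tau(\vartheta) = -(d_1-d_2)\vartheta_\ast(H)$. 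Hence $\vartheta$ is harmonic iff $\vartheta_\ast(H) = 0$, i.e. (since $H$ is horizontal and $\vartheta_\ast$ is injective on the horizontal space) iff $H = 0$, i.e. the fibers are minimal; by Theorem \ref{Clairaut_condition} this is equivalent to $\nabla\beta = 0$.

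Next I would bring in the Ricci soliton equation. Since $(M_1,g_{M_1},W,\mu)$ is a Ricci soliton with $W \in \Gamma(Ker\vartheta_\ast)$, equation \eqref{eqn1.2} restricted to vertical vectors $U_1,U_2$ together with \eqref{eqn3.1} gives
\begin{equation*}
\tfrac12 (L_W g_{M_1})(U_1,U_2) + Ric^\nu(U_1,U_2) - (d_1-d_2)g_{M_1}(U_1,U_2)|\nabla\beta|^2 - g_{M_1}(U_1,U_2)\,\mathrm{div}(\nabla\beta) + \mu\, g_{M_1}(U_1,U_2) = 0.
\end{equation*}
Taking the trace over an orthonormal vertical frame $\{U_k\}_{d_2+1 \le k \le d_1}$, the term $\tfrac12\,\mathrm{tr}(L_W g_{M_1})|_{\nu}$ becomes $\sum_k g_{M_1}(\nabla_{U_k} W, U_k)$, which I would identify (using that the fibers are totally umbilical with mean curvature $-\nabla\beta$, via Theorem \ref{Clairaut_condition}, and that $W$ is vertical) with $\mathrm{div}^\nu(W) + (\text{a multiple of } g_{M_1}(W,\nabla\beta))$; the cross-term vanishes because $\nabla\beta$ is horizontal and $W$ is vertical. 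After the trace one obtains a relation of the shape
\begin{equation*}
\text{(divergence term)} + s^\nu - (d_1-d_2)^2|\nabla\beta|^2 - (d_1-d_2)\,\mathrm{div}(\nabla\beta) + \mu(d_1-d_2) = 0,
\end{equation*}
where $s^\nu$ is the scalar curvature of $Ker\vartheta_\ast$.

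Finally I would combine the two halves. If $\vartheta$ is harmonic then, by the first paragraph, $\nabla\beta = 0$, so $|\nabla\beta|^2 = 0$ and $\mathrm{div}(\nabla\beta) = 0$; the divergence term arising from $L_W g_{M_1}$ also drops out (integrating over a fiber, or more directly: with $\nabla\beta=0$ the fibers are totally geodesic and the trace of $L_W g_{M_1}$ on the fiber reduces to $2\,\mathrm{div}^\nu(W)$, which I would need to argue integrates to zero or is absorbed — this is the delicate accounting point). What remains is exactly $s^\nu + \mu(d_1-d_2) = 0$, i.e. $s^\nu = -\mu(d_1-d_2)$. Conversely, if $s^\nu = -\mu(d_1-d_2)$, then the traced identity forces $(d_1-d_2)|\nabla\beta|^2 + \mathrm{div}(\nabla\beta) = 0$ (after disposing of the divergence term), and since the fibers' mean curvature is $-\nabla\beta$, a standard argument — integrating $\mathrm{div}(\nabla\beta)$ against the umbilicity relation, or invoking that $\nabla\beta$ being horizontal with $(d_1-d_2)|\nabla\beta|^2 = -\mathrm{div}(\nabla\beta)$ forces $\nabla\beta = 0$ on a suitable domain — yields $H = -\nabla\beta = 0$, hence $\tau(\vartheta)=0$. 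The main obstacle I anticipate is the careful bookkeeping of the trace of the Lie-derivative term $L_W g_{M_1}$ and justifying that its divergence contribution does not interfere with the equivalence; here I would lean on the totally-umbilical structure of the fibers from Theorem \ref{Clairaut_condition} and, if needed, a compactness or completeness assumption implicit in the setup to kill the exact divergence.
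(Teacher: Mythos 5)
Your proposal follows essentially the same route as the paper: you use the conformal-submersion tension field formula together with homothety of a totally geodesic CCS (Theorem \ref{thr3}) to reduce harmonicity to $\nabla \beta =0$, and then trace the vertical Ricci soliton equation via (\ref{eqn3.1}) to obtain the relation between $s^{Ker\vartheta _{\ast }}$, $|\nabla \beta |^{2}$, $\mathrm{div}(\nabla \beta )$ and $\mu $, exactly as in the paper's equations (\ref{eqn3.33})--(\ref{eqn3.36}). The two points you flag as delicate — disposing of the traced Lie-derivative term $\sum_{k}g_{M_{1}}(\nabla _{U_{k}}W,U_{k})$ and passing from $(d_{1}-d_{2})|\nabla \beta |^{2}+\mathrm{div}(\nabla \beta )=0$ to $\nabla \beta =0$ — are precisely the steps the paper itself treats summarily (it drops the Lie-derivative trace by appealing to metricity of $\nabla $ and simply asserts the equivalence in (\ref{eqn3.36})), so your argument contains no gap beyond what is already present in the paper's own proof.
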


\begin{proof}
Since $\vartheta $ is a conformal submersion then we have%
\begin{equation}
\tau (\vartheta )=(d_{1}-d_{2})\vartheta _{\ast }(\nabla \beta )+(d_{2}-2)%
\frac{\sigma ^{2}}{2}\vartheta _{\ast }(\nabla _{\mathcal{H}}\frac{1}{\sigma
^{2}}).  \label{eqn3.33}
\end{equation}

Now, putting $\xi =W$ in (\ref{eqn3.5a}), we have%
\begin{eqnarray*}
&&\frac{1}{2}\left\{ g_{M_{1}}(\nabla _{U_{1}}W,U_{2})+g_{M_{1}}(\nabla
_{U_{2}}W,U_{1})\right\} +Ric^{\nu }(U_{1},U_{2}) \\
&&+g_{M_{1}}(U_{1},U_{2})\left\{ \mu -(d_{1}-d_{2})|\nabla \beta
|^{2}-div(\nabla \beta )\right\} \\
&=&0.
\end{eqnarray*}

Taking trace, we have%
\begin{eqnarray}
&&\sum\limits_{k=d_{2}+1}^{d_{1}}g_{M_{1}}(\nabla
_{U_{k}}U_{k},U_{k})+\sum\limits_{k=d_{2}+1}^{d_{1}}Ric^{v}(U_{k},U_{k})
\label{eqn3.34} \\
&&-\left\{ \mu -(d_{1}-d_{2})|\nabla \beta |^{2}-div(\nabla \beta )\right\}
\sum\limits_{k=d_{2}+1}^{d_{1}}g_{M_{1}}(U_{k},U_{k})  \notag \\
&=&0,  \notag
\end{eqnarray}

herein $\left\{ U_{k}\right\} _{d_{2}+1\leq k\leq d_{1}}$ is an orthonormal
bases of $Ker\vartheta _{\ast }.$ Since $\nabla $ is metric connection, then
(\ref{eqn3.34}) can be written as%
\begin{equation}
s^{Ker\vartheta _{\ast }}-\left\{ \mu -(d_{1}-d_{2})|\nabla \beta
|^{2}(d_{1}-d_{2})-div(\nabla \beta )\right\} =0,  \label{eqn3.35}
\end{equation}

where $s^{Ker\vartheta _{\ast }}$ is the scalar curvature of $Ker\vartheta
_{\ast }.$ Since $s^{Ker\vartheta _{\ast }}=-\mu (d_{1}-d_{2})$ then (\ref%
{eqn3.35}) implies%
\begin{equation}
(d_{1}-d_{2})^{2}|\nabla \beta |^{2}+div(\nabla \beta
)(d_{1}-d_{2})=0\Leftrightarrow \nabla \beta =0.  \label{eqn3.36}
\end{equation}

Thus from (\ref{eqn3.36}) and Lemma \ref{lem2.1} we obtain $\tau (\vartheta
)=0\Leftrightarrow \nabla \beta =0.$ This completes the proof.
\end{proof}

\begin{example}
Let $M_{1}=\left\{ (u_{1},u_{2},u_{3})\in
\mathbb{R}
^{3}:u_{1}\neq 0\right\} $ be a Riemannian manifold with Riemannian metric $%
g_{M_{1}}$ on $M_{1}$ stated $%
g_{M_{1}}=e^{-2u_{1}}du_{1}^{2}+e^{-2u_{1}}du_{2}^{2}+e^{-2u_{1}}du_{3}^{2}.$
Let $M_{2}=\left\{ (v_{1},v_{2})\in
\mathbb{R}
^{2}\right\} $ be a Riemannian manifold with Riemannian metric $g_{M_{2}}$
on $M_{2}$ stated $g_{M_{2}}=e^{2u_{1}}dv_{1}^{2}+e^{2u_{1}}dv_{2}^{2}.$
Define a map $\vartheta :(M_{1}^{d_{1}},g_{M_{1}})\rightarrow
(M_{2}^{d_{2}},g_{M_{2}})$ by%
\begin{equation*}
\vartheta (u_{1},u_{2},u_{3})=(u_{1},u_{2}).
\end{equation*}

by direct computations%
\begin{equation*}
Ker\vartheta _{\ast }=Span\left\{ V=e_{3}\right\} ,
\end{equation*}

and%
\begin{equation*}
(Ker\vartheta _{\ast })^{\bot }=Span\left\{ X_{1}=e_{1},X_{2}=e_{2}\right\} ,
\end{equation*}

herein $\left\{ e_{1}=e^{u_{1}}\frac{\partial }{\partial u_{1}}%
,e_{2}=e^{u_{1}}\frac{\partial }{\partial u_{2}},e_{3}=e^{u_{1}}\frac{%
\partial }{\partial u_{3}}\right\} $ are bases of $T_{p_{1}}M_{1}$ and $%
\left\{ e_{1}^{\ast }=e^{u_{1}}\frac{\partial }{\partial v_{1}},e_{2}^{\ast
}=e^{u_{1}}\frac{\partial }{\partial v_{2}}\right\} $ are bases of $%
T_{\vartheta (p_{1})}M_{2},$ for any $p_{1}\in M_{1}.$ By direct
computations, one can see that $\vartheta _{\ast }X_{1}=e_{1}^{\ast
},\vartheta _{\ast }X_{2}=e_{2}^{\ast },\vartheta _{\ast }V=0$ and $%
g_{M_{2}}(\vartheta _{\ast }X_{i},\vartheta _{\ast }X_{j})=\sigma
^{2}g_{M_{1}}(X_{i},X_{j}),$ for $\sigma =e^{2u_{1}}.$ Thus $\vartheta $ is
conformal submersion with dilation $\sigma =e^{u_{1}}.$ Now, we will find a
smooth function $\beta $ on $M_{1}$ satisfying $T_{V}V=-g_{M_{1}}(V,V)\nabla
\beta $ for $V\in \Gamma (Ker\vartheta _{\ast }).$ we can simply calculate
that%
\begin{eqnarray}
\Gamma _{11}^{1} &=&-1,\Gamma _{11}^{2}=\Gamma _{11}^{3}=0,  \label{eqn3.37}
\\
\Gamma _{21}^{2} &=&-1,\Gamma _{21}^{1}=\Gamma _{21}^{3}=0,  \notag \\
\Gamma _{12}^{2} &=&-1,\Gamma _{12}^{1}=\Gamma _{12}^{3}=0,  \notag \\
\Gamma _{22}^{1} &=&1,\Gamma _{22}^{2}=\Gamma _{22}^{3}=0,  \notag \\
\Gamma _{33}^{1} &=&1,\Gamma _{33}^{2}=\Gamma _{33}^{3}=0,  \notag \\
\Gamma _{31}^{3} &=&-1,\Gamma _{31}^{1}=\Gamma _{31}^{2}=0,  \notag \\
\Gamma _{13}^{3} &=&-1,\Gamma _{13}^{1}=\Gamma _{13}^{2}=0,  \notag \\
\Gamma _{32}^{1} &=&\Gamma _{32}^{2}=\Gamma _{32}^{3}=0,  \notag \\
\Gamma _{23}^{1} &=&\Gamma _{23}^{2}=\Gamma _{23}^{3}=0.  \notag
\end{eqnarray}

By using (\ref{eqn3.37}), we get%
\begin{eqnarray}
\nabla _{e_{1}}e_{1} &=&0,\nabla _{e_{2}}e_{2}=e^{2u_{1}}\frac{\partial }{%
\partial u_{1}},\nabla _{e_{3}}e_{3}=e^{2u_{1}}\frac{\partial }{\partial
u_{1}},  \label{eqn3.38} \\
\nabla _{e_{1}}e_{2} &=&0,\nabla _{e_{2}}e_{1}=-e^{2u_{1}}\frac{\partial }{%
\partial u_{2}},\nabla _{e_{3}}e_{1}=-e^{2u_{1}}\frac{\partial }{\partial
u_{3}},  \notag \\
\nabla _{e_{3}}e_{2} &=&\nabla _{e_{1}}e_{3}=\nabla _{e_{2}}e_{3}=0.  \notag
\end{eqnarray}

Using (\ref{eqn3.38}), we get%
\begin{equation*}
\nabla _{V}V=e^{u_{1}}X_{1}.
\end{equation*}

Its mean that $\mathcal{H}\nabla _{V}V=e^{u_{1}}X_{1},v\nabla _{V}V=0.$ By (%
\ref{eqn2.4}), we have%
\begin{equation*}
T_{V}V=e^{u_{1}}X_{1}.
\end{equation*}

Similarly, if we take $X=\lambda _{1}X_{1}+\lambda _{2}X_{2}$ for $\lambda
_{1},\lambda _{2}\in
\mathbb{R}
$ then%
\begin{equation*}
\nabla _{V}X=-\lambda _{1}e^{u_{1}}V.
\end{equation*}

Its mean that $\mathcal{H}\nabla _{V}X=0,v\nabla _{V}X=-\lambda
_{1}e^{u_{1}}V.$ Also,
\begin{equation*}
\nabla _{X}V=0,
\end{equation*}

i.e. $\mathcal{H}\nabla _{X}V=0,v\nabla _{X}V=0.$ Then by (\ref{eqn2.5}), we
have%
\begin{equation}
S_{X}V=0,  \label{eqn3.39}
\end{equation}

and
\begin{equation*}
\nabla _{X}X=\lambda _{2}^{2}e^{u_{1}}X_{1}-\lambda _{1}\lambda
_{2}e^{u_{1}}X_{2},
\end{equation*}

i.e. $\mathcal{H}\nabla _{X}V=\lambda _{2}^{2}e^{u_{1}}X_{1}-\lambda
_{1}\lambda _{2}e^{u_{1}}X_{2},v\nabla _{X}X=0.$ Then by (\ref{eqn2.6}), we
have%
\begin{equation}
S_{X}X=0.  \label{eqn3.40}
\end{equation}

For any smooth function on $M_{1},$ the gradient of $\beta $ with respect to
the metric $g_{M_{1}}$ is given by $\nabla \beta
=\sum\limits_{i,j=1}^{3}g_{M_{1}}^{ij}\frac{\partial \beta }{\partial u_{i}}%
\frac{\partial }{\partial u_{j}}.$ Therefore,%
\begin{equation*}
\nabla \beta =e^{2u_{1}}\frac{\partial \beta }{\partial u_{1}}\frac{\partial
}{\partial u_{1}}+e^{2u_{1}}\frac{\partial \beta }{\partial u_{2}}\frac{%
\partial }{\partial u_{2}}+e^{2u_{1}}\frac{\partial \beta }{\partial u_{3}}%
\frac{\partial }{\partial u_{3}}.
\end{equation*}

Hence,%
\begin{eqnarray}
\nabla \beta &=&-e^{2u_{1}}\frac{\partial }{\partial u_{1}}  \label{eqn3.41}
\\
&=&-e^{u_{1}}X_{1},  \notag
\end{eqnarray}

for the function $\beta =-u_{1}.$ On the other hand, we have%
\begin{equation*}
g_{M_{1}}(V,V)=1.
\end{equation*}

Then it is easy to verify that $T_{V}V=-g_{M_{1}}(V,V)\nabla \beta .$ Thus,
by (\ref{eqn2.7}) and Theorem \ref{Clairaut_condition}, we see that this map
is CCS with $r=e^{-u_{1}}.$ Now, we will show that $M_{1}$ admits a Ricci
soliton, i.e.%
\begin{equation}
\frac{1}{2}(L_{E}g_{M_{1}})(F,G)+Ric(F,G)+\mu g_{M_{1}}(F,G)=0,
\label{eqn3.42}
\end{equation}

for any $E,F,G\in \Gamma (TM_{1}).$ Since here the dimension of $%
Ker\vartheta _{\ast }$ is one and the dimension of $(Ker\vartheta _{\ast
})^{\bot }$ is two, therefore we can decompose $E,F$ and $G$ such that $%
F=\lambda _{1}V+\lambda _{2}X_{1}+\lambda _{3}X_{2},G=\lambda _{4}V+\lambda
_{5}X_{1}+\lambda _{6}X_{2}$ and $E=\lambda _{7}V+\lambda _{8}X_{1}+\lambda
_{9}X_{2},$ where $V$ denotes for component of $Ker\vartheta _{\ast }$ and $%
X_{1},X_{2}$ denote for component of $(Ker\vartheta _{\ast })^{\bot }$ and $%
\left\{ \lambda _{i}\right\} _{1\leq i\leq 9}\in
\mathbb{R}
$ are some scalars. Now, since
\begin{equation*}
\frac{1}{2}(L_{E}g_{M_{1}})(F,G)=\frac{1}{2}\left\{ g_{M_{1}}(\nabla
_{F}E,G)+g_{M_{1}}(\nabla _{G}E,F)\right\} ,
\end{equation*}

which is equal to%
\begin{eqnarray*}
&&\frac{1}{2}(L_{E}g_{M_{1}})(F,G) \\
&=&\frac{1}{2}\left\{
\begin{array}{c}
g_{M_{1}}(\nabla _{\lambda _{1}V+\lambda _{2}X_{1}+\lambda _{3}X_{2}}\lambda
_{7}V+\lambda _{8}X_{1}+\lambda _{9}X_{2},\lambda _{4}V+\lambda
_{5}X_{1}+\lambda _{6}X_{2}) \\
+g_{M_{1}}(\nabla _{\lambda _{4}V+\lambda _{5}X_{1}+\lambda
_{6}X_{2}}\lambda _{7}V+\lambda _{8}X_{1}+\lambda _{9}X_{2},\lambda
_{1}V+\lambda _{2}X_{1}+\lambda _{3}X_{2})%
\end{array}%
\right\} ,
\end{eqnarray*}

which implies%
\begin{equation}
\frac{1}{2}(L_{E}g_{M_{1}})(F,G)=\frac{1}{2}\left\{
\begin{array}{c}
e^{u_{1}}(\lambda _{1}\lambda _{5}\lambda _{7}-2\lambda _{1}\lambda
_{4}\lambda _{8} \\
-2\lambda _{3}\lambda _{6}\lambda _{8}+\lambda _{3}\lambda _{5}\lambda
_{9}+\lambda _{2}\lambda _{4}\lambda _{7}+\lambda _{2}\lambda _{6}\lambda
_{9})%
\end{array}%
\right\} .  \label{eqn3.43}
\end{equation}

Also,%
\begin{equation}
g_{M_{1}}(F,G)=\lambda _{1}\lambda _{4}+\lambda _{2}\lambda _{5}+\lambda
_{3}\lambda _{6},  \label{eqn3.44}
\end{equation}

and%
\begin{eqnarray}
&&Ric(F,G)  \label{eqn3.45} \\
&=&\lambda _{1}\lambda _{4}Ric(V,V)+\lambda _{1}\lambda
_{5}Ric(V,X_{1})+\lambda _{1}\lambda _{6}Ric(V,X_{2})  \notag \\
&&+\lambda _{2}\lambda _{4}Ric(X_{1},V)+\lambda _{2}\lambda
_{5}Ric(X_{1},X_{1})+\lambda _{2}\lambda _{6}Ric(X_{1},X_{2})  \notag \\
&&+\lambda _{3}\lambda _{4}Ric(X_{2},V)+\lambda _{3}\lambda
_{5}Ric(X_{2},X_{1})+\lambda _{3}\lambda _{6}Ric(X_{2},X_{2}).  \notag
\end{eqnarray}

Since $\dim (Ker\vartheta _{\ast })=1,$ so $Ric^{v}(V,V)=0$ and using (\ref%
{eqn3.39}), (\ref{eqn3.40}) and (\ref{eqn3.41}) in (\ref{eqn3.1}), we get%
\begin{equation}
Ric(V,V)=e^{2u_{1}}.  \label{eqn3.46}
\end{equation}

From (\ref{eqn3.39}) and (\ref{eqn3.40}) we have%
\begin{equation}
Ric(V,X_{1})=Ric(V,X_{2})=0.  \label{eqn3.47}
\end{equation}

From (\ref{eqn3.3}), we have%
\begin{equation}
Ric(X_{1},X_{1})=e^{2u_{1}}  \label{eqn3.48}
\end{equation}%
\begin{equation}
Ric(X_{2},X_{2})=e^{2u_{1}}.  \label{eqn3.49}
\end{equation}

Using (\ref{eqn3.46}), (\ref{eqn3.47}), (\ref{eqn3.48}) and (\ref{eqn3.49})
in (\ref{eqn3.45}), we have%
\begin{equation}
Ric(F,G)=e^{2u_{1}}\left\{ \lambda _{1}\lambda _{4}+\lambda _{2}\lambda
_{5}+\lambda _{3}\lambda _{6}\right\} .  \label{eqn3.50}
\end{equation}

Now, by using (\ref{eqn3.43}), (\ref{eqn3.44}) and (\ref{eqn3.50}) in (\ref%
{eqn3.42}), we have
\begin{eqnarray*}
&&\frac{1}{2}\left\{
\begin{array}{c}
e^{u_{1}}(\lambda _{1}\lambda _{5}\lambda _{7}-2\lambda _{1}\lambda
_{4}\lambda _{8}-2\lambda _{3}\lambda _{6}\lambda _{8} \\
+\lambda _{3}\lambda _{5}\lambda _{9}+\lambda _{2}\lambda _{4}\lambda
_{7}+\lambda _{2}\lambda _{6}\lambda _{9})%
\end{array}%
\right\} \\
&&+e^{2u_{1}}\left\{ \lambda _{1}\lambda _{4}+\lambda _{2}\lambda
_{5}+\lambda _{3}\lambda _{6}\right\} +\mu (\lambda _{1}\lambda _{4}+\lambda
_{2}\lambda _{5}+\lambda _{3}\lambda _{6}) \\
&=&0.
\end{eqnarray*}

So, $(M_{1},g_{M_{1}})$ admit a Ricci soliton for%
\begin{equation*}
\mu =-e^{2u_{1}}-\frac{e^{u_{1}}(\lambda _{1}\lambda _{5}\lambda
_{7}-2\lambda _{1}\lambda _{4}\lambda _{8}-2\lambda _{3}\lambda _{6}\lambda
_{8}+\lambda _{3}\lambda _{5}\lambda _{9}+\lambda _{2}\lambda _{4}\lambda
_{7}+\lambda _{2}\lambda _{6}\lambda _{9})}{\lambda _{1}\lambda _{4}+\lambda
_{2}\lambda _{5}+\lambda _{3}\lambda _{6}},
\end{equation*}

where $\lambda _{1}\lambda _{4}+\lambda _{2}\lambda _{5}+\lambda _{3}\lambda
_{6}\neq 0.$Ricci soliton $(M_{1},g_{M_{1}})$ becomes expanding, steady or
shrinking for some values of $\lambda _{i}$'s with respect to $\mu >0,$ $\mu
=0$ or $\mu <0,$ respectively.
\end{example}

M. Polat

Department of Mathematics, Faculty of Sci. Dicle University, {21280, Sur,
Diyarbak\i r-Turkey.}\newline
E-mail: murat.polat@dicle.edu.tr \{M. Polat\}


\begin{thebibliography}{99}
\bibitem{Bishop_1972} R. L. Bishop, \textit{Clairaut submersions},
Differential geometry (in Honor of K-Yano), Kinokuniya, Tokyo, 21-31, 1972.

\bibitem{Baird_2003} P. Baird and J. C. Wood, \textit{Harmonic Morphisms
between Riemannian Manifolds}, Clarendon Press, Oxford, 2003.

\bibitem{Bejan_2021} C. L. Bejan, S. E. Meri\c{c} and E. Kili\c{c}, \textit{%
Contact-complex Riemannian submersions}, Math., \textbf{9} (23), 1-10, 2021.

\bibitem{Besse_1987} A. L. Besse, \textit{Einstein Manifolds},
Springer-Verlag, Berlin, Heidelberg, New York, 1987.

\bibitem{Chaubev_2022} S. K. Chaubey, M. D. Siddiqi and S. Yadav, \textit{%
Almost $\eta $-Ricci-Bourguignon solitons on submersions from Riemannian
submersions}, Balkan J. Geom. Appl., \textbf{27} (1), 24-38, 2022.

\bibitem{Deshmukh_2014} S. Deshmukh and F. R. Al-Solamy, \textit{Conformal
vector fields on a Riemannian manifold}, Balkan J. Geom. Appl., \textbf{19}
(2), 86-93, 2014.

\bibitem{doCarmo} M. P. do Carmo, \textit{Differential geometry of curves
and surfaces}, Prentice-Hall, Inc., Englewood Cliffs, New Jersey, 1976.

\bibitem{Falcitelli_2008} M. Falcitelli, S. Ianus and A. M. Pastore, \textit{%
Riemannian Submersions and Related Topics}, World Scientific, River Edge,
NJ, 2004.

\bibitem{Fatima_2021} T. Fatima, M. A. Akyol and R. Kumar, \textit{Almost
Yamabe solitons on a total space of almost Hermitian submersions}, arXiv
preprint arXiv:2110.00664 [math.DG].

\bibitem{Fugledge_1978} B. Fugledge, \textit{Harmonic morphisms between
Riemannian manifolds}, Ann. Inst. Fourier (Grenoble), \textbf{28}, 107-144,
1978.

\bibitem{Gundmundsson_1992} S. Gundmundsson, \textit{The Geometry of
Harmonic Morphisms}, PhD Thesis, University of Leeds, 1992.

\bibitem{GGupta_2022} G. Gupta, R. Sachdeva, R. Kumar and R. Rani, \textit{%
On conformal Riemannian maps whose total manifolds admit a Ricci soliton},
J. Geom. Phys., \textbf{178} (104539), 1-19, 2022.

\bibitem{Hamilton_1988} R. S. Hamilton, \textit{The Ricci flow on surfaces,
mathematics and general relativity}, Contemp. Math., \textbf{71}, 237-262,
1988.

\bibitem{Ishihara_1979} T. Ishihara, \textit{A mapping of Riemannian
manifolds which preserves harmonic functions}, J. Math. Kyoto Univ., \textbf{%
19}, 215-229, 1979.

\bibitem{Lee_2015} J. Lee, J. H. Park, B. \c{S}ahin, D. Y. Song, \textit{%
Einstein conditions for the base space of anti-invariant Riemannian
submersions and Clairaut submersions}, Taiwanese J. Math., \textbf{19} (4),
1145-1160, 2015.

\bibitem{Meena_2022} K. Meena and T. Zawadzki, \textit{Clairaut conformal
submersions}, arXiv preprint arXiv:2202.00393 [math.DG].

\bibitem{Meena_2023} K. Meena and A. Yadav, \textit{Conformal submersions
whose total manifolds admit a Ricci soliton}, Mediterr. J. Math., \textbf{20}
(3), 1-26, 2023.

\bibitem{Yadav_Clairaut Riem_base} K. Meena and A. Yadav, \textit{Clairaut
Riemannian maps}, Turkish J. Math., \textbf{47} (2), 794-815, 2023.

\bibitem{Meric_2020} S. E. Meri\c{c}, \textit{Some remarks on Riemannian
submersions admitting an almost Yamabe soliton}, Adiyaman Univ. J. Sci.,
\textbf{10} (1), 295-306, 2020.

\bibitem{Meric_2019} S. E. Meri\c{c} and E. Kili\c{c}, \textit{Riemannian
submersions whose total manifolds admit a Ricci soliton}, Int. J. Geom.
Methods Mod. Phys., \textbf{16} (12), 1950196-1 -- 1950196-12, 2019.

\bibitem{Nore_1966} T. Nore, \textit{Second fundamental form of a map}, Ann.
Mat. Pura Appl., \textbf{146} (1), 281-310, 1986.

\bibitem{Neill_1966} B. O'Neill, \textit{The fundamental equations of a
submersion}, Michigan Math. J., \textbf{13} (4), 459-469, 1966.

\bibitem{Perleman_2002} G. Perelman, \textit{The Entropy formula for the
Ricci flow and its geometric applications}, preprint, 2002,
arXiv:math/0211159 [math.DG].

\bibitem{Pigola_2011} S. Pigola, M. Rigoli, M. Rimoldi and A.G. Setti,
\textit{Ricci almost solitons}, Ann. Sc. Norm. Super. Pisa Cl. Sci., \textbf{%
10} (4), 757-799, 2011.

\bibitem{polat} Polat, M., \textit{Clairaut Pointwise Slant Submersion from
Locally Product Riemannian Manifolds,} International Electronic Journal of
Geometry, 16(1), 283-294,2023.

\bibitem{Sahin_2017} B. \c{S}ahin, \textit{Riemannian submersions,
Riemannian maps in Hermitian geometry, and their applications}, Academic
Press, 2017.

\bibitem{Sahin_2017a} B. \c{S}ahin, \textit{Circles along a Riemannian map
and Clairaut Riemannian maps}, Bull. Korean Math. Soc., \textbf{54} (1),
253-264, 2017.

\bibitem{Siddiqi_2020} M. D. Siddiqi and M. A. Akyol, \textit{$\eta $%
-Ricci-Yamabe solitons on Riemannian submersions from Riemannian manifolds},
arXiv preprint arXiv:2004.14124 [math.DG].

\bibitem{Siddiqui_2022} M. D. Siddiqi, A. H. Alkhaldi, M. A. Khan and A. N.
Siddiqui, \textit{Conformal $\eta $-Ricci Solitons on Riemannian Submersions
under Canonical Variation}, axioms, \textbf{11} (11), 1-16, 2022.

\bibitem{Yadav_Riem_total} A. Yadav and K. Meena, \textit{Riemannian maps
whose total manifolds admit a Ricci soliton}, J. Geom. Phys., \textbf{168},
1-13, 2021.

\bibitem{Yadav_Riem_base} A. Yadav and K. Meena, \textit{Riemannian maps
whose base manifolds admit a Ricci soliton}, Publ. Math. Debrecen, \textbf{103} (1-2), 115-139, 2023.

\bibitem{Yadav_Clairaut Riem_total} A. Yadav and K. Meena, \textit{Clairaut
Riemannian maps whose total manifolds admit a Ricci soliton}, Int. J. Geom.
Methods Mod. Phys., \textbf{19} (2), 2250024-1--2250024-17, 2022.

\bibitem{Zawadzki_2014} T. Zawadzki, \textit{Existence conditions for
conformal submersions with totally umbilical fibers}, Differential Geom.
Appl., \textbf{35}, 69-84, 2014.

\bibitem{Zawadzki_2020} T. Zawadzki, \textit{On conformal submersions with
totally geodesic or minimal fibers}, Ann. Global Anal. Geom., \textbf{58},
191-205, 2020.
\end{thebibliography}
\end{document}